\newcommand{\fp}{\mathscr{U}_\gamma}
\newcommand{\sample}{\mathscr{S}_\gamma}
\newcommand{\neff}{n_{\text{eff},\gamma}}
\newcommand{\tneff}{n_{\text{V-eff},\gamma}}
\newcommand{\tr}{^\intercal}
\newcommand{\mat}{\mathbf}
\newcommand{\argmin}{\operatorname*{arg\,min}}
\newcommand{\argmax}{\operatorname*{arg\,max}}
\renewcommand{\Pr}{\mathbb{P}}
\newcommand{\dx}{\,\mathrm{d}}
\newcommand{\R}{\mathbb{R}}
\newcommand{\E}{\mathbb{E}}
\newcommand{\Pp}{\mathbb{P}}
\newcommand{\1}{\mathbf{1}}
\newcommand{\Var}{\mathrm{Var}}
\newcommand{\Mid}{\,\big|\,}
\newcommand{\MID}{\,\Big|\,}
\newtheorem{theorem}{Theorem}[section]
\newtheorem{lemma}[theorem]{Lemma}
\newtheorem{proposition}[theorem]{Proposition}
\newtheorem{corollary}[theorem]{Corollary}
\theoremstyle{remark}
\newtheorem{remark}[theorem]{Remark}
\def\spacingset#1{\renewcommand{\baselinestretch}%
    {#1}\small\normalsize} \spacingset{1}
\begin{document}

\title{Minimum Hellinger Distance Estimators for Complex Survey Designs}

\author{David Kepplinger and Anand N. Vidyashankar}
\date{Department of Statistics, George Mason University, Fairfax, VA, USA}

\maketitle

\begin{abstract}
Reliable inference from complex survey samples can be derailed by outliers and high-leverage observations induced by unequal inclusion probabilities and calibration. We develop a minimum Hellinger distance estimator (MHDE) for parametric superpopulation models under complex designs, including Poisson PPS and fixed-size SRS/PPS without replacement, with possibly stochastic post-stratified or calibrated weights. Using a Horvitz--Thompson–adjusted kernel density plug-in, we show: (i) $L^1$–consistency of the KDE with explicit large-deviation tail bounds driven by a variance-adaptive effective sample size; (ii) uniform exponential bounds for the Hellinger affinity that yield MHDE consistency under mild identifiability; (iii) an asymptotic Normal distribution for the MHDE with covariance $\mat A^{-1}\bm\Sigma \mat A\tr$ (and a finite-population correction under without-replacement designs); and (iv) robustness via the influence function and $\alpha$–influence curves in the Hellinger topology. Simulations under Gamma and lognormal superpopulation models quantify efficiency--robustness trade-offs relative to weighted MLE under independent and high-leverage contamination. An application to NHANES 2021--2023 total water consumption shows that the MHDE remains stable despite extreme responses that markedly bias the MLE. The estimator is simple to implement via quadrature over a fixed grid and is extensible to other divergence families.%
\end{abstract}

\noindent%
{\it Keywords:}
Hellinger distance,
complex survey design,
Horvitz--Thompson,
probability-proportional-to-size (PPS),
kernel density estimation,
large deviations,
asymptotic normality,
robust estimation,
influence function.

\newpage
\spacingset{1.5} 

\section{Introduction}

Reliable estimation and inference in complex survey samples is a challenging problem, particularly when outliers can be present.
In this work, we develop a robust estimator and asymptotic inference for survey samples from a finite population with possibly unequal inclusion probabilities which can be based on auxiliary information.
Outliers, i.e., unusually small or large values, in the observed sample must be handled carefully to avoid biased and invalid inference from the sample survey.
These outliers may be legitimate values, but can also be caused by data entry errors and other problems.
Regardless of the legitimacy of these unusual values, inclusion probabilities from auxiliary information can drastically amplify the outliers' effects on the estimator.
Borrowing from the terminology of high breakdown estimators for linear regression, we call outliers in units with low inclusion probability \textit{high-leverage observations}.

In large-scale surveys, it is common to adjust the survey weights derived from the inclusion probabilities, to match certain characteristics to known totals for the entire population or within strata \citep{Zhang2000}.
Post-stratification or calibration leads to stochastic survey weights even if the initial inclusion probabilities are deterministic.
When such adjustments are applied, outliers and high-leverage observations can be further amplified and have an even more detrimental effect on an estimator.

We propose a reliable minimum Hellinger distance estimator (MHDE) for model parameters under complex survey designs, with potentially random survey weights.
Minimum divergence estimators are known for their robustness toward outliers without sacrificing efficiency in clean samples in a wide range of models and settings \citep[e.g.,][]{Beran1977,Donoho1988,Simpson1989,Lindsay1994,Lu2003}.
Recently, minimum phi-divergence estimators have been shown to achieve robustness and high efficiency for multinomial and polytomous logistic regression in complex survey designs \citep{Castilla2018a,Castilla2021}.
Following that line of research, we develop an MHDE for the parameters of a superpopulation model from a survey sample.
We allow inclusion probabilities derived from auxiliary information, e.g., probability proportional to size (PPS) sampling \citep{Sarndal1992}, cluster sampling or stratified sampling, and stochastic survey weights adjusted by post-stratification or calibration.

In Section~\ref{sec:method} we define our MHDE for complex survey designs and show in Section~\ref{sec:theory} that it is consistent under mild assumptions, is asymptotically Normal and robust under arbitrary contamination.
The empirical studies in Section~\ref{sec:empirical-studies} demonstrate that the estimator is highly efficient and yields valid inference, even in the presence of outliers and high-leverage observations.
We further apply our estimator to the National Health and Nutrition Examination Survey (NHANES) \citep{nhanes2025}, where we show that our MHDE is much less affected by unusual values than the maximum likelihood estimator.

\subsection{Background}

For each $\gamma \in \mathbb N$ we consider a finite population of $N_\gamma$ units $\fp=\{1, \dotsc N_\gamma\}$.
We observe i.i.d.\ draws $\{(Y_{\gamma i}, Z_{\gamma i})\colon i \in \fp\}$ from a superpopulation law on $\mathbb R^{d+1} \times (0,\infty)$.
The $Y_{\gamma i}$ are the characteristics of interest with unknown but measurable and integrable density $g \in L^1(\mathbb R^d)$.
The auxiliary variable $Z_{\gamma i}$ can be used to derive inclusion probabilities and, if used, is assumed to be known and greater than 0 with probability 1.
From $\fp$, a sample of size $n_\gamma$ is drawn according to pre-specified, potentially unequal, inclusion probabilities $\pi_{\gamma i} > 0$, $i \in \fp$.
The units included in the random sample are denoted by $\sample \subset \fp$.

For simple designs, such as fixed-size simple random sampling (SRS) with or without replacement, the inclusion probabilities are equal for all units.
However, in many survey samples, the design is more complicated.
In this work, we focus on the probability proportional-to-size (PPS) design with random size (Poisson--PPS), where the inclusion probabilities depend on an auxiliary variable, $Z_{\gamma i}$, $i \in \fp$, with $\pi_{\gamma i} = \frac{n_{\gamma}Z_i}{\sum_{k\in\fp} Z_k}$ and hence $\sum_i \pi_{\gamma i} = n_\gamma$.
In the PPS design, $Z_{\gamma i}$ is known prior to sampling for all units in $\fp$, e.g., the earnings of business entities in the previous year(s) or the taxable income of households.
If the auxiliary variable, $Z_{\gamma i}$, is correlated with $Y_{\gamma i}$, PPS sampling can reduce the sampling variance of an estimator.
Other sampling designs also use auxiliary information to derive inclusion probabilities, such as cluster sampling or stratified sampling based on geographic location or school districts, to name just a few.
Unequal inclusion probabilities yield non-identically distributed observations, as the unit-level density becomes $\tilde g_{\gamma,i}(y) = \pi_{\gamma i} g(y)$.

Based on the inclusion probabilities, we define the sample weight for each unit in the sample as $w_{\gamma i} = 1 / \pi_{\gamma i}$, $i \in \sample$.
These sample weights need to be considered in the estimation to achieve consistency and reduce bias, e.g., using the Horvitz-Thompson (HT) adjustment \citep{Horvitz1952}.
However, with post-stratification or calibration, the sample weights may be further adjusted to $\omega_{\gamma i} = \zeta_{\gamma i} w_{\gamma i}$, where $\zeta_{\gamma i}$ is a positive random factor with $\Pr(\zeta_{\gamma i} > 0) = 1$.

In this paper our goal is to find a parametric density from a family $\mathscr{F} := \{f_\theta\colon \theta \in \Theta \subseteq \mathbb{R}^p\}$ which is ``closest'' to the true superpopulation distribution in the topology defined by divergence $D$.
Hence, we seek $f_{\hat\theta}$ where $\hat\theta_\gamma = \argmin_{\theta\in\Theta} D(f_\theta \| g)$.

The theoretical and empirical properties of $\hat\theta$ are intricately linked to the divergence, $D$.
Information divergences between probability density functions are a rich family of measures, but not all are suitable under model misspecification, i.e., $g \notin \mathscr{F}$, for example the Tukey-Huber $\varepsilon$-contamination model \citet{Tukey1959}.
The Kullback-Leibler divergence, for example, yields the maximum likelihood estimate \citep{Cover2001} but can lead to arbitrarily biased estimates under contamination.
In this paper, we therefore focus on the more robust (squared) Hellinger distance:
\begin{equation}
H^2(f, g) = 
    \frac{1}{2} \int_\Omega \left(
      \sqrt{f(y)} - \sqrt{g(y)}
    \right)^2
    \dx y =
    1 - \int_\Omega \sqrt{f(y) g(y)} \dx y.
\end{equation}

The Hellinger divergence is known to yield estimates that are robust towards model misspecification \citep{Beran1977}, yet achieve high efficiency if $g \in \mathscr{F}$ \citep{Lindsay1994}.
Important for large-scale surveys, the MHDE can be quickly computed using numerical integration if the dimension $Y$ is reasonably low.
In the following Section~\ref{sec:method} we describe the MHDE for complex survey designs based on a Horvitz-Thompson adjusted Kernel Density Estimator.

%


\subsection{Notation}

Throughout we denote by $\delta_{\gamma i} \in \{0, 1\}$ whether a unit in the finite population $\fp$ is included in the sample $\sample$ or not, i.e., $\delta_{\gamma i} = 1$ if $i \in \sample$ and $\delta_{\gamma i} = 0$ otherwise.
The first-order inclusion probabilities are thus $\pi_{\gamma i} = \Pr(\delta_{\gamma i} = 1)$.
We let the ``effective'' sample size be $\neff := N_\gamma^2 / \sum_{i\in\fp} \pi_{\gamma,i}^{-1}$, and the variance-adaptive effective sample size $\tneff := N_\gamma^2 / \sum_{i\in\fp} (1 - \pi_{\gamma,i}) / \pi_{\gamma,i}$.
For fixed-size designs, such as SRS--WOR or fixed-size PPS--WOR, we write $\alpha_\gamma := n_\gamma / N_\gamma$.
The bandwidth of the kernel density estimator depends on the sample size $n_\gamma$ and we denote it by $h_\gamma > 0$.
We then write the normalized kernel as $K_{h_\gamma}(x) = h_\gamma^{-d} K(x / h_\gamma)$.

We denote the Hellinger affinity (Bhattacharyya coefficient) between the parametric density $f_\theta$ and the KDE $\hat f_\gamma$ or the (arbitrary) distribution $F$ with density $f$, respectively, by
\begin{align*}
\Gamma_\gamma(\theta) := \int \sqrt{\hat f_\gamma(y) f_\theta(y)} \dx y, &&
\Gamma_F(\theta) := \int \sqrt{f(y) f_\theta(y)} \dx y.
\end{align*}
We simply write $\Gamma(\theta) := \Gamma_{G}(\theta)$ when referring to the true superpopulation distribution.

Finally, we define the score function as $u_\theta(y) := \nabla_\theta \log f_\theta(y)$ and use
\begin{align*}
\phi_g(y) := \frac{1}{4} u_{\theta_0}(y) \sqrt{\tfrac{f_{\theta_0}(y)}{g(y)}}, &&
\bm\Sigma := \E_{G} \left[ \phi_{g}(Y) \phi_{g}(Y)\tr \right], &&
\mat A := -\nabla^2_\theta\, \Gamma(\theta) \Big|_{\theta = \theta_0},
\end{align*}
to denote the scaled score function, the expected information and the Hessian of the Hellinger affinity, respectively.
Where obvious, we omit the subscript from the scaled score function and write $\phi(y) := \phi_g(y)$.

\section{Methodology}\label{sec:method}

Let the true superpopulation distribution of $Y$ be $G$ with density $g$.
Introducing the parametric family $\mathscr{F}=\{f_{\theta}: \theta\in\Theta\}$, the population minimizer $\theta_0 = \argmin_{\theta \in \Theta} H^2(f_\theta, g)$ represents the ``closest'' parametric density in $\mathscr F$ to $G$ in the Hellinger topology.
To estimate $\theta_0$, we minimize the Hellinger distance between the estimated density and the densities in the parametric family:
\begin{equation}\label{eqn:mhde-def}
\hat\theta_\gamma := \argmin_{\theta \in \Theta} H^2(f_{\theta}, \hat f_\gamma) =
\argmax_{\theta \in \Theta} \Gamma_\gamma(\theta).
\end{equation}

To obtain a consistent estimate of $\theta_0$, we use the Horvitz-Thompson (HT) adjusted kernel density estimator:
\begin{equation}
\hat f_\gamma(y):=\frac{1}{\sum_{i\in\fp} \delta_{\gamma i}/\pi_{\gamma i}} \sum_{i\in\fp} \frac{\delta_{\gamma i}}{\pi_{\gamma i}} K_{h_\gamma}(Y_{\gamma i} - y).
\end{equation}

Underpinning the robustness properties of the MHDE defined in~\eqref{eqn:mhde-def} is its continuity in the Hellinger topology, as shown in Proposition~\ref{prop:continuity} in the Appendix under mild conditions.
As the proportion of contaminated observations decreases, the estimator converges to the maximizer of~\eqref{eqn:mhde-def} without contamination.

\subsection{A Note on Computation}

Our software implementation maximizes $\Gamma_\gamma(\theta)$ by Nelder-Mead \citep{Nelder1965}.
The integral in $\Gamma_\gamma$ is computed over a fixed grid $\mathcal G_Y$ using the Gauss-Kronrod quadrature and a given number of subdivisions.
Therefore, $\hat f_\gamma$ must be evaluated only once for each $y \in \mathcal G_Y$.
Particularly for large $n_\gamma$, this substantially eases the computational burden compared to adaptive quadrature.
The grid is chosen to cover only the regions where $\hat f_\gamma > 0$, which can be quickly evaluated knowing the kernel and bandwidth.

\section{Theory}\label{sec:theory}

We present three main results for the MHDE~\eqref{eqn:mhde-def} in the finite population setting under the superpopulation model framework.
We first show that the HT-adjusted KDE under PPS sampling converges in $L_1$ to the superpopulation density $f_\gamma$, while the naïve KDE converges to a size-biased density.
We then prove that the MHDE based on the HT-adjusted KDE is consistent for $\theta_0$ and derive its limiting normal distribution under several sample designs.
Finally, we obtain the influence function and demonstrate the robustness of the estimator.

In the following, we write the HT-adjusted KDE as
$
\hat f_{\gamma}(y) = \frac{1}{S_\gamma} T_\gamma(y),
$
with
\begin{align*}
S_\gamma:=\frac{1}{N_\gamma}\sum_{i\in\fp}\frac{\delta_{\gamma i}}{\pi_{\gamma i}}, &&
T_\gamma(y):=\frac{1}{N_\gamma}\sum_{i\in\fp}\frac{\delta_{\gamma i}}{\pi_{\gamma i}}K_{h_\gamma}(y-Y_{\gamma i}).
\end{align*}

\subsection{Consistency of the HT-adjusted KDE}

For consistency to hold, we assume that the kernel function $K$ is smooth and that the bandwidth decreases at a prescribed rate.
We also make concrete our assumptions about the superpopulation model and the regularity of the design.

\begin{enumerate}[label=\textbf{A\arabic*}, resume=assumptions]
  \item (Smoothness of the kernel).
  The kernel $K \in L^1 \cap L^2$ is bounded, non-negative, Lipschitz ($\nabla K \in L^1$) and integrates to one, $\int_{\mathbb R^d} K(x) \dx x = 1$.
  \label{ass:kernel-1}
  \item (Bandwidth and growth).
  The bandwidth $h_\gamma\to0$ such that $\neff h_\gamma^d \to \infty$ and $N_\gamma h_\gamma^d \to \infty$.
  Moreover, $\alpha_\gamma \to \alpha \in (0, 1]$.
  \label{ass:bandwidth-1}
  \item (Superpopulation model).
  \((Y_{\gamma i},Z_{\gamma i})\) are i.i.d.\ across \(i \in \fp\) with $Y_{\gamma i}\sim g \in L^1(\mathbb R^d)$.
  The design may depend on \(Z\) but not directly on \(Y\) given \(Z\) (PPS).
  \label{ass:superpopulation}
  \item (Design regularity).
  There exists $0 < c_0<\infty$ such that
  $$
    \lim_{\gamma\to\infty}\,
    \Pr\!\left(
    \max_{i \in \fp}\pi_{\gamma i}^{-1}\ \le\ c_0/\alpha_\gamma
    \right)=1.
  $$
  Equivalently, we write $\ \max_i \pi_{\gamma i}^{-1}=O_p(1/\alpha_\gamma)$.
  To satisfy this assumption in applications, extremely large inverse inclusion weights can be truncated.
  \label{ass:design-regularity}
\end{enumerate}

Lemma~\ref{lem:self-normalization} in the Appendix shows that under these assumptions $\hat f_\gamma(y)$ is self-normalizing, i.e., integrates to 1 for every sample.
The following theorem shows that the HT-adjusted KDE converges to the true density $g$ in $L^1$.

\begin{theorem}[Large-deviation-based \(L_1\)-consistency of HT-adjusted KDE]\label{thm:ht-kde-consistency}
Under Assumptions \ref{ass:kernel-1}--\ref{ass:design-regularity},
\[
\|\hat f_{\gamma}-g\|_{L^1}\ \xrightarrow{\;\Pr\;}\ 0.
\]
Moreover, there exist constants \(C_1,C_2,C_3>0\), depending only on \(K\) and \(c_0\),
such that for all \(\tau\in(0,1]\),
$$
\Pr\left(\|\hat f_{\gamma}-g\|_{1} > \tau \right)
\le
C_1 \exp\left\{-C_2 \neff h_\gamma^{d} \tau^2\right\}
+
C_1 \exp\left\{-C_3 N_\gamma h_\gamma^{d} \tau^2\right\}
+
o(1).
$$
If in addition $\neff h_\gamma^d / \log(1/h_\gamma)\to\infty$, then
\(\|\hat f_{\gamma}-g\|_{1}\to 0\) almost surely.
\end{theorem}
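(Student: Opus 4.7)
My strategy is the three-way decomposition
$$\|\hat f_\gamma - g\|_1 \le \|\hat f_\gamma - \tilde g_\gamma\|_1 + \|\tilde g_\gamma - g_\gamma\|_1 + \|g_\gamma - g\|_1,$$
where $g_\gamma := K_{h_\gamma} * g$ is the kernel-smoothed superpopulation density and $\tilde g_\gamma(y) := N_\gamma^{-1}\sum_{i\in\fp} K_{h_\gamma}(y-Y_{\gamma i})$ is the (unobserved) census-level KDE. These pieces correspond respectively to the HT sampling noise, the census-level KDE fluctuation, and the deterministic smoothing bias, and will supply the $\neff h_\gamma^d$ exponential, the $N_\gamma h_\gamma^d$ exponential, and the $o(1)$ remainder. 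The bias piece is classical: since $g\in L^1$ and $K$ integrates to one (A1, A3), $g_\gamma\to g$ in $L^1$ by the approximate-identity property.

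For the census-level fluctuation, the $Y_{\gamma i}$ are i.i.d.\ from $g$, so $\tilde g_\gamma$ is a standard KDE with sample size $N_\gamma$, and I would apply McDiarmid's bounded-differences inequality to $F:=\|\tilde g_\gamma-g_\gamma\|_1$: perturbing a single $Y_{\gamma i}$ moves $\tilde g_\gamma$ by at most $2K_{h_\gamma}/N_\gamma$ in $L^1$, so the bounded differences are $2/N_\gamma$. The expectation is controlled by Jensen's inequality applied to the pointwise variance $\Var(\tilde g_\gamma(y))\lesssim g_\gamma(y)\|K\|_\infty/(N_\gamma h_\gamma^d)$, together with a truncation of the integration region to handle the tails of $g$, giving $\E F\lesssim (N_\gamma h_\gamma^d)^{-1/2}$. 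Combining yields $\Pr(F>\tau)\le 2\exp(-C_3 N_\gamma h_\gamma^d\tau^2)$ once $\tau$ dominates the mean; the other regime is absorbed into the $o(1)$.

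For the HT piece I would exploit the ratio structure
$$\hat f_\gamma - \tilde g_\gamma = \frac{T_\gamma - \tilde g_\gamma}{S_\gamma} + \frac{(1-S_\gamma)\,\tilde g_\gamma}{S_\gamma},$$
and work on the high-probability event $\{S_\gamma\ge 1/2\}\cap\{\max_i\pi_{\gamma i}^{-1}\le c_0/\alpha_\gamma\}$, whose probability tends to one by A4 and a Bernstein bound on $S_\gamma$. On that event $\|\hat f_\gamma-\tilde g_\gamma\|_1\le 2\|T_\gamma-\tilde g_\gamma\|_1+2|1-S_\gamma|$. Conditional on $(Y_{\gamma i},Z_{\gamma i})_{i\in\fp}$ and under Poisson--PPS, the $\delta_{\gamma i}$ are independent Bernoulli$(\pi_{\gamma i})$; Bernstein on $S_\gamma-1$ (variance $1/\tneff$, range $c_0/\alpha_\gamma$) contributes a tail of order $\exp(-c\tneff\tau^2)\le\exp(-C_2\neff h_\gamma^d\tau^2)$ since $\tneff\ge\neff h_\gamma^d$ eventually. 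For $\|T_\gamma-\tilde g_\gamma\|_1$, McDiarmid on the $\delta_{\gamma i}$ with differences of order $c_0/n_\gamma$ concentrates this quantity about its conditional mean, which is bounded via Cauchy--Schwarz on $\Var(T_\gamma(y)\mid Y,Z)\le(c_0/\alpha_\gamma)N_\gamma^{-2}\sum_i K_{h_\gamma}^2(y-Y_i)$ whose $L^1$ integral is of order $1/(n_\gamma h_\gamma^d)$, yielding a mean of order $(\neff h_\gamma^d)^{-1/2}$. Assembling gives the $C_1\exp(-C_2\neff h_\gamma^d\tau^2)$ contribution. The a.s.\ claim then follows by choosing $\tau_\gamma\to 0$ slowly enough that both exponential tails are summable under the strengthened bandwidth assumption and applying Borel--Cantelli.

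The principal obstacle is obtaining the $h_\gamma^d$ factor in the HT exponent rather than mere $\neff$: McDiarmid alone yields only $\exp(-cn_\gamma\tau^2)$, and the correct scaling emerges from combining concentration with the sharp $(\neff h_\gamma^d)^{-1/2}$ bound on the conditional mean. That mean bound in turn requires a careful truncation of the $L^1$ integration to a region of bounded $g$-mass, so that Cauchy--Schwarz does not incur a $|\R^d|^{1/2}$ blow-up; the residual from the truncation is absorbed into $\|g_\gamma-g\|_1$ and hence into the $o(1)$ term.
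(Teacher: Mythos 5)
Your skeleton is the same as the paper's — the three-way split into HT design noise, census-level KDE fluctuation, and smoothing bias is exactly Lemma~\ref{lem:threeWay} (your $\tilde g_\gamma$ is the paper's $\bar f_{\gamma,h}$, your ratio manipulation on $\{S_\gamma\ge 1/2\}$ reproduces its proof), and the normalizer and bias are handled identically (Bernstein for $S_\gamma$, approximate identity for $\|g*K_{h_\gamma}-g\|_1$). Where you genuinely diverge is in the two stochastic $L^1$ terms: the paper reduces $K_{h_\gamma}*\mu_\gamma$ to cell masses on a grid of side $h_\gamma$ (Lemmas~\ref{lem:cell-smooth}, \ref{lem:conv-grid}), applies per-cell Bernstein (Lemma~\ref{lem:cell-bernstein}), and takes a union bound over $M_\gamma\asymp h_\gamma^{-d}$ cells, so the $h_\gamma^d$ in the exponent is the price of the union bound; you instead use bounded-differences (McDiarmid) concentration of the $L^1$ norms around their (conditional) means — differences $c_0/n_\gamma$ for the design term given $\{Y,Z\}$ under Poisson--PPS, $2/N_\gamma$ for the i.i.d.\ term — and let $h_\gamma^d$ enter only through the Cauchy--Schwarz bound $(\,\cdot\,h_\gamma^d)^{-1/2}$ on the expected deviation, in the Devroye--Györfi style. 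Your route is shorter, dispenses with the gridding lemmas, and in fact gives a sharper concentration exponent ($n_\gamma\tau^2$ and $N_\gamma\tau^2$, which dominate the stated $\neff h_\gamma^d\tau^2$ and $N_\gamma h_\gamma^d\tau^2$ since $\tneff\ge\neff$ and $\alpha_\gamma\to\alpha>0$); what the paper's cellwise route buys is that it transfers verbatim to fixed-size/rejective designs via the Serfling--Freedman bound (Lemma~\ref{lem:bernstein-wor}), producing the $(1-\alpha_\gamma)$ correction in Proposition~\ref{prop:design-ld}, whereas McDiarmid needs independence of the $\delta_{\gamma i}$ and you would need a separate martingale argument for the WOR variant. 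Two points to tighten: the sub-$\tau$ regime cannot literally be "absorbed into the $o(1)$" (which is $\tau$-free); it is handled by taking $C_1\ge 1$ so the bound is vacuous when $\neff h_\gamma^d\tau^2$ or $N_\gamma h_\gamma^d\tau^2$ is $O(1)$, and for that you must keep the truncation set in your mean bounds of fixed, $h$-independent size so the means are genuinely $O((\neff h_\gamma^d)^{-1/2})$ and $O((N_\gamma h_\gamma^d)^{-1/2})$ — the same implicit restriction the paper makes when it partitions into finitely many ($\asymp h_\gamma^{-d}$) cells — so this is a shared caveat rather than a gap in your argument relative to the paper's.
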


A key ingredient in the proof of the $L^1$ consistency is the following proposition about the large-deviation bounds for the design term.

\begin{proposition}[Direct large-deviation bounds for the design term]\label{prop:design-ld}
Under assumptions~\ref{ass:kernel-1} and \ref{ass:design-regularity}, and letting
\begin{align*}
\bar f_{\gamma,h}(y):=\frac{1}{N_\gamma}\sum_{i\in\fp}K_{h_\gamma}(y-Y_{\gamma i}),
\end{align*}
there exist constants $c,C>0$ (depending only on $c_0,K,d$) such that for all $\tau\in(0,1]$,
$$
\Pr\left(\|T_\gamma-\bar f_{\gamma,h}\|_{L^1}>\tau\ \mid \{Y,Z\}\right)
\le
C\exp\left\{-c\, \neff\, h_\gamma^{d}\,\min(\tau^2,\tau)\right\}
+
C\exp\{-c\, N_\gamma\, h_\gamma^{d}\}.
$$
Under sampling without replacement (rejective), the first exponent is multiplied by $(1-\alpha_\gamma)$.
\end{proposition}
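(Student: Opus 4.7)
The plan is to condition on the finite population $\{(Y_{\gamma i}, Z_{\gamma i})\colon i\in\fp\}$, so that $\bar f_{\gamma,h}$ and the $\pi_{\gamma i}$ are deterministic and, under Poisson--PPS, the only randomness is the independent Bernoulli vector $(\delta_{\gamma i})$. Writing the centred weights $\eta_i := \delta_{\gamma i}/\pi_{\gamma i} - 1$, the design deviation becomes a mean-zero linear functional of independent, bounded variables,
\begin{equation*}
T_\gamma(y) - \bar f_{\gamma,h}(y) = \frac{1}{N_\gamma}\sum_{i\in\fp}\eta_i\, K_{h_\gamma}(y - Y_{\gamma i}),
\end{equation*}
with $\Var(\eta_i) = (1-\pi_{\gamma i})/\pi_{\gamma i}$ and $|\eta_i| \le c_0/\alpha_\gamma$ on the design-regularity event of Assumption~\ref{ass:design-regularity}.

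First I would control the expected $L^1$ norm. By Jensen and Fubini, $\E_\delta|T_\gamma(y) - \bar f_{\gamma,h}(y)| \le \sqrt{v(y)}$ with $v(y) = \frac{1}{N_\gamma^2}\sum_i \frac{1-\pi_{\gamma i}}{\pi_{\gamma i}} K_{h_\gamma}(y - Y_{\gamma i})^2$, and a direct computation gives $\int v = \|K\|_2^2/(\tneff h_\gamma^d)$. Since $\int_{\mathbb R^d}\sqrt v$ may diverge, I would localize to a bulk region $B_\gamma$ depending only on $(Y_{\gamma i})$ and of bounded Lebesgue measure (e.g., the $O(h_\gamma)$-thickening of a ball capturing most of the empirical mass), apply Cauchy--Schwarz on $B_\gamma$ to get $\int_{B_\gamma}\sqrt v \le |B_\gamma|^{1/2}\bigl(\int v\bigr)^{1/2} = O((\tneff h_\gamma^d)^{-1/2})$, and handle the complement via $\int_{B_\gamma^c}|T_\gamma - \bar f_{\gamma,h}| \le T_\gamma(B_\gamma^c) + \bar f_{\gamma,h}(B_\gamma^c)$. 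Using $\tneff \ge \neff$ then gives $\E_\delta\|T_\gamma - \bar f_{\gamma,h}\|_{L^1} = O((\neff h_\gamma^d)^{-1/2})$.

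Next I would obtain concentration around this mean. Let $F(\delta) := \|T_\gamma - \bar f_{\gamma,h}\|_{L^1}$; flipping a single $\delta_i$ shifts $T_\gamma$ by $\pm K_{h_\gamma}(\cdot - Y_{\gamma i})/(N_\gamma\pi_{\gamma i})$, hence moves $F$ by at most $d_i = 1/(N_\gamma\pi_{\gamma i})$. On the regularity event, $\sum_i d_i^2 \le c_0/(\alpha_\gamma\neff)$ and $\max_i d_i \le c_0/n_\gamma$. Because $F$ is the supremum of the mean-zero linear process $\phi\mapsto \int \phi(T_\gamma - \bar f_{\gamma,h})\,\mathrm{d}y$ over $\{|\phi|\le 1\}$, a Bousquet/Talagrand-type Bernstein inequality for functions of independent inputs gives
\begin{equation*}
\Pr\bigl(F - \E_\delta F > t \,\big|\, Y, Z\bigr) \le C\exp\!\left(-\frac{c\, t^2}{V + M t}\right)
\end{equation*}
with variance proxy $V = O(1/(\tneff h_\gamma^d))$ and boundedness $M = O(1/n_\gamma)$. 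Combining with the mean bound and taking $t = \tau/2$ once $\tau$ exceeds a constant multiple of $(\neff h_\gamma^d)^{-1/2}$ yields the sub-Gaussian regime $\exp(-c\,\neff h_\gamma^d\tau^2)$ transitioning to the linear regime $\exp(-c\,n_\gamma\,\tau)$ at $\tau\asymp h_\gamma^d$; together these produce the first exponential $C\exp(-c\,\neff h_\gamma^d\min(\tau^2,\tau))$. The second term $C\exp(-c N_\gamma h_\gamma^d)$ absorbs the mass discarded in the localization step, controlled by a Chernoff bound on the count of $Y_{\gamma i}$'s outside the bulk whose expectation scales with $N_\gamma h_\gamma^d$. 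For the rejective (fixed-size without-replacement) variant the $\delta_i$ are no longer independent but are negatively associated (Joag-Dev--Proschan), so the Bernstein and bounded-differences arguments apply unchanged and the exact rejective variance contributes the extra $(1-\alpha_\gamma)$ factor to the first exponent.

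The main obstacle is the localization step: controlling $\int_{\mathbb R^d}\sqrt{v(y)}\,\mathrm{d}y$ without a Lebesgue blow-up requires a choice of $B_\gamma$ that is simultaneously of bounded volume, depends only on $(Y_{\gamma i})$, and has complement whose discarded mass can be absorbed at the $\exp(-c N_\gamma h_\gamma^d)$ scale; a naive cube covering gives $|B_\gamma| \sim N_\gamma h_\gamma^d$ which is too large, so the bulk must be tailored to the empirical distribution of the $Y_{\gamma i}$'s in a way that still matches the exponent appearing in the second term of the bound.
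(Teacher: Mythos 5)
Your proposal follows a genuinely different strategy from the paper --- a mean bound for $\E_\delta\|T_\gamma-\bar f_{\gamma,h}\|_{L^1}$ followed by Talagrand/Bousquet concentration of the $L^1$ norm viewed as a supremum of a linear process, with negative association handling the rejective case --- whereas the paper never bounds the mean at all: it reduces $\|T_\gamma-\bar f_{\gamma,h}\|_{L^1}=\|K_{h_\gamma}*\mu_\gamma\|_{L^1}$ to a finite sum of cell-measure deviations $\sum_j|\mu_\gamma(B_{\gamma j})|$ via a Lipschitz smoothing lemma (Lemmas~\ref{lem:cell-smooth}--\ref{lem:conv-grid}), applies a per-cell Bernstein/Serfling bound (Lemma~\ref{lem:cell-bernstein}), and finishes with a union bound over $M_\gamma\asymp h_\gamma^{-d}$ cells; the WOR correction enters through the Serfling/Freedman martingale version of Bernstein rather than negative association. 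The difference matters because the paper's route needs no control of $\int\sqrt{v(y)}\dx y$ and hence no localization of the data.

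That localization is exactly where your argument has a genuine gap, and the obstacle you flag at the end is not merely technical. Under the stated assumptions $g$ is only in $L^1$, with no support or tail condition, so for any region $B_\gamma$ of bounded Lebesgue measure a \emph{positive fraction} of the $Y_{\gamma i}$ lies in $B_\gamma^c$; the discarded term $T_\gamma(B_\gamma^c)+\bar f_{\gamma,h}(B_\gamma^c)$ is then of constant order (its conditional expectation is $\approx 2\bar f_{\gamma,h}(B_\gamma^c)$, a fixed fraction of the mass), and it cannot be absorbed into $C\exp\{-cN_\gamma h_\gamma^{d}\}$; in particular the suggested Chernoff bound is miscalibrated, since the expected count outside a bounded bulk scales like $N_\gamma G(B_\gamma^c)$, not like $N_\gamma h_\gamma^{d}$. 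Letting $B_\gamma$ grow to capture the tails destroys the Cauchy--Schwarz step ($|B_\gamma|^{1/2}$ blows up) and makes the constants depend on $g$ rather than only on $c_0,K,d$; indeed, without tail assumptions the target mean bound $\E_\delta\|T_\gamma-\bar f_{\gamma,h}\|_{L^1}=O((\neff h_\gamma^{d})^{-1/2})$ need not hold, since a cellwise computation gives only $\E_\delta\|T_\gamma-\bar f_{\gamma,h}\|_{L^1}\lesssim\sqrt{M_{\mathrm{occ}}/\neff}$ with $M_{\mathrm{occ}}$ the number of occupied cells, which can grow without bound when the data are spread out. A further, smaller issue: the statement is conditional on $\{Y,Z\}$, so the only randomness available is the design indicators, and a Chernoff bound on how many $Y_{\gamma i}$ fall outside the bulk is not usable in that conditional form. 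The Bousquet concentration step itself is plausible (the weak variance and the bound $c_0/n_\gamma$ on individual terms are the right ingredients), but without a valid mean bound it does not deliver the stated inequality; to repair the argument you would essentially have to replace the bulk localization by the paper's cell decomposition, at which point the union-bound route is more direct.
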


The proof of the proposition as well as the consistency of the HT-adjusted KDE are given in Appendix~\ref{sec:appendix-proof-consistency}.

\begin{remark}[Rates under smoothness]
If $g$ is $\beta$-Hölder and $K$ has order $\beta$, the three-way decomposition in the large-deviation bound yields
$$
\|\hat f_{\gamma}-g\|_1
= O_\Pr\left(h_\gamma^\beta\right)
+ O_\Pr\left((\neff\,h_\gamma^d)^{-1/2}\right)
+ O_\Pr\left((N_\gamma h_\gamma^d)^{-1/2}\right).
$$
Since $N_\gamma\ge n_{\mathrm{eff},\gamma}$, the last term is dominated by the middle one. 
Choosing $h_\gamma\asymp n_{\mathrm{eff},\gamma}^{-1/(2\beta+d)}$ balances the (design) variance and bias, giving
$$
\|\hat f_{\gamma}-g\|_1=O_\Pr\left(\neff^{-\beta/(2\beta+d)}\right).
$$
\end{remark}

\begin{corollary}[Simple random sampling]
If \(\pi_{\gamma i}\equiv n_\gamma/N_\gamma\) (i.e., simple random sampling with replacement), then \(\neff \asymp n_\gamma\)
and Theorem~\ref{thm:ht-kde-consistency} recovers the well-known triangular-array SRS result:
if \(h_\gamma\downarrow 0\) and \(n_\gamma h_\gamma^d\to\infty\), then
\(\|\hat f_{\gamma}-g\|_1\to 0\) in probability.
\end{corollary}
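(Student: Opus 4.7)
The plan is to verify that, under the uniform inclusion probabilities $\pi_{\gamma i} \equiv n_\gamma/N_\gamma$, every hypothesis of Theorem~\ref{thm:ht-kde-consistency} is implied by the stated conditions, so the conclusion is immediate. First I would compute $\neff$ directly from its definition. Since $\sum_{i \in \fp} \pi_{\gamma i}^{-1} = N_\gamma \cdot (N_\gamma / n_\gamma) = N_\gamma^2/n_\gamma$, we obtain
$$
\neff = \frac{N_\gamma^2}{N_\gamma^2/n_\gamma} = n_\gamma,
$$
so $\neff \asymp n_\gamma$ (with constant $1$), settling the first assertion of the corollary.

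Next I would check the remaining hypotheses of Theorem~\ref{thm:ht-kde-consistency}. The design regularity \ref{ass:design-regularity} is trivial: $\max_{i\in\fp} \pi_{\gamma i}^{-1} = N_\gamma/n_\gamma = 1/\alpha_\gamma$ holds almost surely, so $c_0 = 1$ works and no truncation is needed. The bandwidth requirements in \ref{ass:bandwidth-1} reduce to $\neff h_\gamma^d = n_\gamma h_\gamma^d \to \infty$ and $N_\gamma h_\gamma^d \geq n_\gamma h_\gamma^d \to \infty$, both of which follow from the stated hypothesis $n_\gamma h_\gamma^d \to \infty$. The kernel and superpopulation assumptions \ref{ass:kernel-1} and \ref{ass:superpopulation} are inherited from the global setup.

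Finally, invoking Theorem~\ref{thm:ht-kde-consistency} gives, for any $\tau \in (0, 1]$,
$$
\Pr\!\left(\|\hat f_{\gamma} - g\|_1 > \tau\right) \leq C_1 \exp\{-C_2\, n_\gamma h_\gamma^d \tau^2\} + C_1 \exp\{-C_3\, N_\gamma h_\gamma^d \tau^2\} + o(1),
$$
and each term on the right vanishes as $\gamma \to \infty$, proving $\|\hat f_{\gamma} - g\|_1 \xrightarrow{\Pr} 0$. There is essentially no technical obstacle: the corollary is a sanity check confirming that the variance-adaptive effective sample size $\neff$ collapses to $n_\gamma$ under uniform inclusion probabilities, and that the general large-deviation bound then recovers the classical Parzen--Rosenblatt $L^1$-consistency. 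The only minor subtlety is that \ref{ass:bandwidth-1} formally requires $\alpha_\gamma \to \alpha \in (0,1]$, which the corollary does not explicitly impose; this can be accommodated by a subsequence argument, or by noting that the proof of Theorem~\ref{thm:ht-kde-consistency} uses $\alpha_\gamma$ only through $\neff$ and $N_\gamma h_\gamma^d$, both already shown to diverge.
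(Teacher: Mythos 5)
Your proposal is correct and matches the intended argument: the paper treats this corollary as an immediate consequence of Theorem~\ref{thm:ht-kde-consistency}, obtained exactly as you do by computing $\neff = N_\gamma^2/(N_\gamma\cdot N_\gamma/n_\gamma) = n_\gamma$, noting that \ref{ass:design-regularity} holds with $c_0=1$, and reading off the tail bound. Your closing observation that the theorem's proof uses $\alpha_\gamma$ only through the bound $\max_i\pi_{\gamma i}^{-1}\le c_0/\alpha_\gamma$ (so the limit requirement in \ref{ass:bandwidth-1} is not genuinely needed here) is a fair and accurate resolution of the only loose end.
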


\subsection{Consistency of the MHDE}

The MHDE with HT-adjusted KDE plug-in~\eqref{eqn:mhde-def} is equivalent to any measurable maximizer
$\hat\theta_\gamma \in \argmax_{\theta\in\Theta}\Gamma_\gamma(\theta)$,
and we make the following identifiability assumption:
\begin{enumerate}[label=\textbf{A\arabic*}, resume=assumptions]
  \item (Identifiability).
  $\theta_0$ uniquely maximizes $\Gamma(\theta)$ and, for each $\varepsilon>0$,
$\sup_{\|\theta-\theta_0\|\ge \varepsilon}\Gamma(\theta) \le \Gamma(\theta_0)-\Delta(\varepsilon)$ for some $\Delta(\varepsilon)>0$.
  \label{ass:mhde-identifiability}
\end{enumerate}

The following proposition establishes the tail bounds for the MHDE deviations and is proven in Appendix~\ref{sec:appendix-proof-consistency-mhde}.

\begin{proposition}[Exponential tail bounds for uniform MHDE deviation]\label{prop:mhde-tail}
Under Assumptions \ref{ass:kernel-1}--\ref{ass:design-regularity}, for all $t\in(0,1]$,
\begin{equation}\label{eq:mhde-tail}
\Pr\left(\sup_{\theta}|\Gamma_\gamma(\theta)-\Gamma(\theta)|>t\right)
\le
C \exp\left\{c\,\neff h_\gamma^{d}\,\min\{t^4,t^2\}\right\}
+
C \exp\left\{-c\,N_\gamma h_\gamma^{d}\right\}.
\end{equation}
For Poisson--PPS without replacement, multiply the first exponent by $(1 - \alpha_\gamma)$.
\end{proposition}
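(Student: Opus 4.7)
The plan is to reduce the uniform supremum over $\theta$ to an $L^1$-deviation bound on $\hat f_\gamma$ and then invoke the tail bounds already in hand. The crucial observation is that the $\theta$-dependence in $\Gamma_\gamma - \Gamma$ factors out via Cauchy--Schwarz, so uniformity comes essentially for free and no empirical-process machinery is required.

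First I would write, for any $\theta \in \Theta$,
\[
\Gamma_\gamma(\theta)-\Gamma(\theta) \;=\; \int \sqrt{f_\theta(y)}\,\bigl(\sqrt{\hat f_\gamma(y)} - \sqrt{g(y)}\bigr)\dx y,
\]
and apply Cauchy--Schwarz together with $\int f_\theta\,\dx y = 1$ to obtain the $\theta$-free upper bound
\[
\sup_{\theta\in\Theta}\bigl|\Gamma_\gamma(\theta)-\Gamma(\theta)\bigr| \;\le\; \left(\int \bigl(\sqrt{\hat f_\gamma} - \sqrt{g}\bigr)^{2} \dx y\right)^{\!1/2} \!=\; \sqrt{2}\,H(\hat f_\gamma, g).
\]
Using the elementary pointwise inequality $(\sqrt{a}-\sqrt{b})^2 \le |a-b|$ (valid for $a,b\ge 0$ via $|a-b|=|\sqrt{a}-\sqrt{b}|(\sqrt{a}+\sqrt{b})$), this upgrades to
\[
\sup_{\theta\in\Theta}\bigl|\Gamma_\gamma(\theta)-\Gamma(\theta)\bigr| \;\le\; \sqrt{\norm{\hat f_\gamma - g}_{L^1}}.
\]
In particular, $\{\sup_\theta|\Gamma_\gamma-\Gamma|>t\} \subseteq \{\norm{\hat f_\gamma - g}_{L^1} > t^2\}$, and the problem reduces to a single-functional tail bound with $\tau = t^2$.

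With $\tau = t^2$ I would plug into the machinery behind Theorem~\ref{thm:ht-kde-consistency}. The decomposition $\norm{\hat f_\gamma - g}_{L^1} \le \norm{\hat f_\gamma - \bar f_{\gamma,h}}_{L^1} + \norm{\bar f_{\gamma,h} - g}_{L^1}$ separates the \emph{design} error, controlled by Proposition~\ref{prop:design-ld} and giving the first exponent $C\exp\{-c\,\neff h_\gamma^{d}\min(\tau^2,\tau)\} = C\exp\{-c\,\neff h_\gamma^{d}\min(t^4,t^2)\}$, from the full-population KDE error. The latter is an ordinary i.i.d.\ $L^1$ error, and under assumption~\ref{ass:bandwidth-1} its large-deviation tail contributes the $t$-independent $C\exp\{-c\,N_\gamma h_\gamma^{d}\}$ piece. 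The without-replacement refinement is inherited verbatim from Proposition~\ref{prop:design-ld}: the design-error exponent picks up the factor $(1-\alpha_\gamma)$.

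The main obstacle would normally be controlling $\sup_\theta$, but the Cauchy--Schwarz factorization removes $f_\theta$ from the estimate altogether, so no bracketing or covering argument is needed. The remaining technical care is book-keeping: passing from the conditional bound of Proposition~\ref{prop:design-ld} to its unconditional counterpart via the tower property, verifying under assumption~\ref{ass:bandwidth-1} that both $\neff h_\gamma^d$ and $N_\gamma h_\gamma^d$ diverge so that the exponents are non-trivial, and matching constants so that the $t$-independent second exponential absorbs what appears as the $o(1)$ remainder in the statement of Theorem~\ref{thm:ht-kde-consistency}.
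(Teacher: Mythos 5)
Your proposal is correct and follows essentially the same route as the paper: Cauchy--Schwarz with $\int f_\theta = 1$ to make the bound uniform in $\theta$ (the paper's Lemma~\ref{lem:uniform}), the pointwise inequality $(\sqrt a-\sqrt b)^2\le|a-b|$ to pass to the $L^1$ norm (Lemma~\ref{lem:HvsL1}), and then the large-deviation bound of Theorem~\ref{thm:ht-kde-consistency} with $\tau=t^2$, including the $(1-\alpha_\gamma)$ factor for without-replacement designs. No substantive differences to note.
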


\begin{theorem}[Consistency of MHDE with HT plug-in]\label{thm:mhde-consistency}
Under Assumptions \ref{ass:kernel-1}--\ref{ass:mhde-identifiability}, let $\hat\theta_\gamma$ be any sequence of $\varepsilon_\gamma$-maximizers, i.e.,
$$
\Gamma_\gamma(\hat\theta_\gamma)\ge \sup_{\theta}\Gamma_\gamma(\theta) - \varepsilon_\gamma
$$
with $\varepsilon_\gamma\to 0$.
Then $\hat\theta_\gamma \to \theta_0$ in probability.
Moreover, for every $\varepsilon>0$,
$$
\Pr\left(\|\hat\theta_\gamma-\theta_0\|\ge \varepsilon\right)
\le
\Pr\left(\sup_{\theta}|\Gamma_\gamma(\theta)-\Gamma(\theta)|>\tfrac{1}{3}\Delta(\varepsilon)\right)
+
\1\{\varepsilon_\gamma>\tfrac{1}{3}\Delta(\varepsilon)\}.
$$
In particular, using \eqref{eq:mhde-tail}, the RHS decays at the exponential rate in $n_{\mathrm{eff},\gamma}h_\gamma^d$.
\end{theorem}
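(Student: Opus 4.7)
The plan is to follow the standard argmax continuity recipe: transfer the identifiability of $\theta_0$ (Assumption~\ref{ass:mhde-identifiability}) from the population functional $\Gamma$ to its empirical version $\Gamma_\gamma$ via the uniform deviation $\sup_\theta |\Gamma_\gamma(\theta)-\Gamma(\theta)|$, then invoke Proposition~\ref{prop:mhde-tail} to convert the resulting deviation event into an exponential tail bound. There is nothing genuinely deep here; the whole point is to produce the clean constant $\tfrac13\Delta(\varepsilon)$ stated in the theorem.

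For the main inequality chain, I would fix $\varepsilon>0$ and work on the event $E_t:=\{\sup_\theta|\Gamma_\gamma(\theta)-\Gamma(\theta)|\le t\}$, with $t$ to be chosen. The $\varepsilon_\gamma$-maximizer property gives $\Gamma_\gamma(\hat\theta_\gamma)\ge \Gamma_\gamma(\theta_0)-\varepsilon_\gamma$, and on $E_t$ the two-sided inequalities $\Gamma_\gamma(\theta_0)\ge\Gamma(\theta_0)-t$ and $\Gamma(\hat\theta_\gamma)\ge\Gamma_\gamma(\hat\theta_\gamma)-t$ combine into
\[
\Gamma(\hat\theta_\gamma)\ \ge\ \Gamma(\theta_0)-2t-\varepsilon_\gamma.
\]
If in addition $\|\hat\theta_\gamma-\theta_0\|\ge\varepsilon$, then identifiability forces $\Gamma(\hat\theta_\gamma)\le \Gamma(\theta_0)-\Delta(\varepsilon)$, so the above implies $\Delta(\varepsilon)\le 2t+\varepsilon_\gamma$. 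Choosing $t=\tfrac13\Delta(\varepsilon)$ shows that the bad event $\{\|\hat\theta_\gamma-\theta_0\|\ge\varepsilon\}$ cannot occur simultaneously with $E_t$ unless $\varepsilon_\gamma>\tfrac13\Delta(\varepsilon)$. Taking complements and using a union bound produces exactly the displayed inequality of the theorem.

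For the limit statement, the second term on the right-hand side is deterministic and vanishes eventually since $\varepsilon_\gamma\to 0$, while the first is controlled by Proposition~\ref{prop:mhde-tail}: with $t=\tfrac13\Delta(\varepsilon)\in(0,1]$ (after shrinking $\varepsilon$ if needed, which is harmless since we only care about arbitrarily small $\varepsilon$), the probability is bounded by
\[
C\exp\bigl\{-c\,\neff h_\gamma^{d}\min\{t^{4},t^{2}\}\bigr\}+C\exp\{-c\,N_\gamma h_\gamma^{d}\},
\]
which tends to zero under Assumption~\ref{ass:bandwidth-1}. This yields both $\hat\theta_\gamma\xrightarrow{\Pr}\theta_0$ and the claimed exponential rate.

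The only thing to watch is that the inequality is clean for \emph{arbitrary} $\varepsilon_\gamma$-maximizers, so measurability of $\hat\theta_\gamma$ must be invoked implicitly when writing the probability. The argument otherwise avoids any smoothness or compactness of $\Theta$: identifiability plus uniform control of $\Gamma_\gamma-\Gamma$ does all the work. Consequently, I do not expect a genuine obstacle; the main care is in threading the constant $\tfrac13$ so that the residual indicator term matches the statement exactly.
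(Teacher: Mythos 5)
Your argument is correct and is essentially the paper's own proof: the same Wald-type argmax consistency step with the $\tfrac13\Delta(\varepsilon)$ split of the uniform deviation event, followed by Proposition~\ref{prop:mhde-tail} for the exponential tail (you compare criteria at the population level $\Gamma(\hat\theta_\gamma)$ while the paper compares within $\Gamma_\gamma$, a purely cosmetic difference). The only quibble — the borderline case $\varepsilon_\gamma=\tfrac13\Delta(\varepsilon)$, where your chain gives $\varepsilon_\gamma\ge\tfrac13\Delta$ rather than the strict inequality in the stated indicator — is present in the paper's proof as well and is immaterial.
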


\begin{proof}
Fix $\varepsilon>0$ and write $\Delta=\Delta(\varepsilon)$ from Assumption~\ref{ass:mhde-identifiability}.
For the event
\[
\mathcal E_\gamma(\varepsilon)
:=
\left\{ \{\sup_{\theta}|\Gamma_\gamma(\theta)-\Gamma(\theta)|\le \tfrac{1}{3}\Delta\} \cap \{ \varepsilon_\gamma\le \tfrac{1}{3}\Delta\} \right\},
\]
we claim that every $\varepsilon_\gamma$-maximizer $\hat\theta_\gamma$ lies in $B(\theta_0,\varepsilon)$.
In fact, for any $\theta$ with $\|\theta-\theta_0\|\ge \varepsilon$,
\[
\Gamma_\gamma(\theta) \le \Gamma(\theta)+\tfrac{1}{3}\Delta \le \Gamma(\theta_0)-\tfrac{2}{3}\Delta,
\]
while at $\theta_0$,
\(
\Gamma_\gamma(\theta_0) \ge \Gamma(\theta_0)-\tfrac{1}{3}\Delta.
\)
Thus $\sup_{\|\theta-\theta_0\|\ge \varepsilon}\Gamma_\gamma(\theta)\le \Gamma_\gamma(\theta_0)-\tfrac{1}{3}\Delta$.
If $\hat\theta_\gamma$ is an $\varepsilon_\gamma$-maximizer with $\varepsilon_\gamma\le \Delta/3$, then
\[
\Gamma_\gamma(\hat\theta_\gamma)\ \ge\ \sup_\theta \Gamma_\gamma(\theta)-\varepsilon_\gamma
\ >\ \Gamma_\gamma(\theta_0) - \tfrac{1}{3}\Delta,
\]
so $\hat\theta_\gamma$ cannot lie outside $B(\theta_0,\varepsilon)$.
Therefore
\[
\Pr\left(\|\hat\theta_\gamma-\theta_0\|\ge \varepsilon\right)
\le
\Pr\left(\mathcal E_\gamma(\varepsilon)^c\right)
\le
\Pr\left(\sup_{\theta}|\Gamma_\gamma(\theta)-\Gamma(\theta)|> \tfrac{1}{3}\Delta\right)
+\1\{\varepsilon_\gamma>\Delta/3\}.
\]
Applying Proposition~\ref{prop:mhde-tail} yields the RHS $\to 0$ as $\gamma\to\infty$.
\end{proof}

\begin{remark}
The argument uses only uniform (in $\theta$) control via Lemma~\ref{lem:uniform} and separation in Assumption~\ref{ass:mhde-identifiability}.
Compactness of $\Theta$ or upper semicontinuity of $\Gamma$ are not required.
\end{remark}

\begin{remark}
All statements hold verbatim under Poisson--PPS without replacement.
In the tail bound \eqref{eq:mhde-tail}, multiply the first exponent by $(1-\alpha_\gamma)$ (finite-population correction).
\end{remark}

\begin{remark}
In case $g \in \mathscr{F}$ such that $g \equiv f_0$, $\hat\theta_\gamma$ converges to the true parameter $\theta_0$.
\end{remark}

\subsection{Asymptotic normality}

To derive the central limit theorem for $\hat\theta_\gamma$ we need the following additional assumptions.

\begin{enumerate}[label=\textbf{A\arabic*}, resume=assumptions]
\item (Design)\label{ass:clt-design}
For Poisson--PPS, $\sqrt{\tneff(S_\gamma - 1)} = O_\Pr(1)$ and
$$
\max_{i\in\fp} \frac{(1 - \pi_{\gamma i}) / \pi_{\gamma i}} { \sum_{j \in \fp} (1 - \pi_{\gamma i}) / \pi_{\gamma i} } \xrightarrow{\;\Pr\;} 0.
$$
For fixed-size SRS--WOR, with $\tneff \equiv n_\gamma / (1 - \alpha_\gamma)$ and $S_\gamma \equiv 1$, these are satisfied if the finite-population correction (FPC) factor $(1 - \alpha_\gamma) \to (1 - \alpha)$ with $\alpha \in [0, 1)$.

\item (Kernel approximation)\label{ass:clt-kernel}
Either $|1 - K(\xi)| \lesssim |\xi|^\beta$ as $\xi \to 0$ for some $\beta > 0$; or $K$ has order $\beta > 0$ (i.e., vanishing moments up to $\lfloor \beta \rfloor)$ and $\sqrt g \in \mathcal C^\beta$ on compacta.
In both cases, $K_h * \phi_g \to \phi_g$ in $L^2(g)$ and $g * K_h - g = O(h_\gamma^\beta)$ in the weighted sense used in the theorem below.

\item (Model smoothness and identifiability)\label{ass:clt-smooth}
$\theta_0$ is the unique maximizer of $\Gamma(\theta)$ with positive definite Hessian $\mat A$.
Moreover, $\phi\in L^2(g;\R^p)$ and $\theta\mapsto \sqrt{f_\theta(y)}$ is twice continuously differentiable in a neighborhood of $\theta_0$, with dominated derivatives allowing differentiation under the integral for $\Gamma$ and $\Gamma_\gamma$.

\item (Bandwidth regime)\label{ass:clt-bandwidth}
The bandwidth $h_\gamma \downarrow 0$ and
\begin{align*}
\sqrt{\tneff}\,h_\gamma^{2\beta}\to 0,
&&
\frac{1}{\sqrt{\tneff}\,h_\gamma^{d}}\to 0,
&&
\frac{\sqrt{\tneff}}{N_\gamma\,h_\gamma^{d}}\to 0.
\end{align*}

\item (Localized risk control)\label{ass:clt-local-risk}
Fix an exhaustion by compacta $A_R\uparrow \R^d$ with $c_R:=\inf_{A_R} g > 0$ (automatic if $g$ is continuous and strictly positive).
For each $R$, there exist $h_0(R)>0$, $C_R<\infty$, and a tail remainder $\tau_R(h)\downarrow 0$ (as $R\uparrow\infty$ uniformly on $h\le h_0(R)$) such that for all $0<h\le h_0(R)$,
\begin{align*}
\sup_{t\in\R^d}\ \int_{A_R}\frac{K_h^2(y-t)}{g(y)}\dx y\ \le\ C_R\,h^{-d},
&&
\sup_{0<h\le h_0(R)}\ \int_{A_R^c}\frac{(K_h^2*g)(y)}{g(y)}\dx y\ \le\ \tau_R(h).
\end{align*}
In addition, Assumption~\ref{ass:clt-kernel} implies the bias bound on compacta
\begin{align*}
\int_{A_R}\frac{(g*K_h-g)^2}{g}\dx y\ \lesssim_R\ h^{2\beta},\,\text{and}\;
\sup_{0<h\le h_0(R)}\int_{A_R^c}\frac{(g*K_h-g)^2}{g}\dx y\ \le\ \tau_R(h).
\end{align*}

\item (Lindeberg/no dominant unit)\label{ass:clt-lindeberg}
Let $\psi_{h_\gamma}=K_{h_\gamma}*\phi_{g}$ and define
$$
X_{\gamma i}:=\frac{1}{N_\gamma}\left(\frac{\delta_{\gamma i}}{\pi_{\gamma i}}-1\right)\psi_{h_\gamma}(Y_{\gamma i})\in\R^p.
$$
Assume the Lindeberg condition for triangular-arrays holds conditional on $\{Y_{\gamma i}\}$, i.e., for every $\varepsilon>0$,
$$
\frac{\sum_{i\in\fp}\E\left[\|X_{\gamma i}\|^2\,\1\left\{\|X_{\gamma i}\|>\varepsilon/\sqrt{\tneff} \right\} \MID \{Y_{\gamma i}\}\right]}
{\sum_{i\in\fp}\Var(X_{\gamma i}\mid \{Y_{\gamma i}\})}
\ \xrightarrow{\;\Pr\;}\ 0,
$$
and $\sum_i\Var(X_{\gamma i}\mid \{Y\})$ converges in probability to $\bm\Sigma/\tneff$ (see the variance limit below).
\end{enumerate}

Assumption~\ref{ass:clt-design} is standard and mild for Poisson--PPS and automatically satisfied for SRS--WOR.
The assumptions~\ref{ass:clt-kernel} and~\ref{ass:clt-bandwidth} are standard for KDE in finite populations, while~\ref{ass:clt-local-risk} is substantially weaker than the usual assumption of $\inf {g} > 0$ globally.
A sufficient condition for~\ref{ass:clt-lindeberg} to hold is $\E_{G}\|\phi(Y)\|^{2+\eta}<\infty$ for some $\eta>0$ together with the no-dominant-unit condition in Assumption~\ref{ass:clt-design}.

\begin{corollary}
Under assumptions~\ref{ass:clt-design}, \ref{ass:clt-kernel} and \ref{ass:clt-local-risk} with $\psi_{h_\gamma}=K_{h_\gamma}*\phi_{g}$ we have
$$
\tneff\Var\left(\frac{1}{N_\gamma}\sum_{i\in\fp}\left(\frac{\delta_{\gamma i}}{\pi_{\gamma i}}-1\right)\psi_{h_\gamma}(Y_{\gamma i}) \MID \{Y_{\gamma i}\}\right)\ \xrightarrow{\;\Pr\;}\ \bm\Sigma.
$$
For SRS--WOR, the variance is multiplied by the FPC $(1-\alpha_\gamma)$.
\end{corollary}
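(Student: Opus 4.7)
I will exploit the conditional independence afforded by Poisson sampling to obtain a closed form for the conditional variance, then reduce to the unweighted limit $\bm\Sigma$ in two stages: a kernel approximation followed by a weighted law of large numbers. Throughout, ``conditioning on $\{Y_{\gamma i}\}$'' is read as conditioning on the full superpopulation panel $\{(Y_{\gamma i},Z_{\gamma i})\}_{i\in\fp}$, which fixes both $\pi_{\gamma i}$ and $\tneff$. Under Poisson--PPS the indicators $\delta_{\gamma i}$ are then independent Bernoulli$(\pi_{\gamma i})$, so writing $w_{\gamma i}:=(1-\pi_{\gamma i})/\pi_{\gamma i}$ and $W_\gamma:=\sum_i w_{\gamma i}$ (hence $\tneff=N_\gamma^2/W_\gamma$),
$$
V_\gamma := \tneff\,\Var\!\left(\tfrac{1}{N_\gamma}\sum_i(\delta_i/\pi_i-1)\psi_{h_\gamma}(Y_i)\MID\{Y,Z\}\right)
\;=\; \frac{1}{W_\gamma}\sum_{i\in\fp} w_{\gamma i}\,\psi_{h_\gamma}(Y_{\gamma i})\psi_{h_\gamma}(Y_{\gamma i})^\intercal.
$$

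Next I replace $\psi_{h_\gamma}$ by $\phi_g$. With $\tilde V_\gamma := W_\gamma^{-1}\sum_i w_{\gamma i}\,\phi_g(Y_{\gamma i})\phi_g(Y_{\gamma i})^\intercal$, the algebraic identity $\psi\psi^\intercal-\phi\phi^\intercal = (\psi-\phi)\psi^\intercal+\phi(\psi-\phi)^\intercal$ combined with Cauchy--Schwarz in the weighted empirical measure $\mu_\gamma := W_\gamma^{-1}\sum_i w_{\gamma i}\delta_{Y_{\gamma i}}$ bounds
$$
\|V_\gamma-\tilde V_\gamma\|_{\mathrm{op}} \;\le\; 2\,\|\psi_{h_\gamma}-\phi_g\|_{L^2(\mu_\gamma)}\bigl(\|\psi_{h_\gamma}\|_{L^2(\mu_\gamma)}+\|\phi_g\|_{L^2(\mu_\gamma)}\bigr).
$$
Assumption~\ref{ass:clt-kernel} yields $\|K_{h_\gamma}*\phi_g-\phi_g\|_{L^2(g)}\to 0$, and a weighted LLN applied to $\|\psi_{h_\gamma}-\phi_g\|^2$ together with $\phi_g\in L^2(g)$ from~\ref{ass:clt-smooth} render the first factor $o_\Pr(1)$ and the second $O_\Pr(1)$; a conditional Markov argument then gives $V_\gamma - \tilde V_\gamma = o_\Pr(1)$.

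For $\tilde V_\gamma \to \bm\Sigma$ the pairs $(Y_{\gamma i},Z_{\gamma i})$ are i.i.d.\ under~\ref{ass:superpopulation}, and under Poisson--PPS $\pi_{\gamma i}=n_\gamma Z_{\gamma i}/\sum_j Z_{\gamma j}$; substituting $N_\gamma^{-1}\sum_j Z_{\gamma j}\to \E[Z]$ by the SLLN (with $O_\Pr(N_\gamma^{-1/2})$ error absorbed via~\ref{ass:design-regularity}) makes $w_{\gamma i}$ asymptotically a measurable function of $Z_{\gamma i}$ alone. The no-dominant-unit part of~\ref{ass:clt-design} ensures $\max_i w_{\gamma i}/W_\gamma\to 0$, so the triangular-array weighted Kolmogorov LLN applies to $\phi_g(Y_{\gamma i})\phi_g(Y_{\gamma i})^\intercal$ and returns the appropriate ratio of expectations; collapsing this ratio to $\bm\Sigma$ uses the decoupling between the design-relevant $Z$ and the model-relevant $Y$ implicit in the paper's superpopulation setup. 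For SRS--WOR the weights collapse to the constant $(1-\alpha_\gamma)/\alpha_\gamma$, the weighted LLN degenerates to the ordinary one, and the FPC factor $(1-\alpha_\gamma)$ emerges directly from $\tneff=n_\gamma/(1-\alpha_\gamma)$. The main obstacle is this last step: confirming that the weighted limit equals $\bm\Sigma$ rather than a $w$-tilted expectation. It is immediate under SRS but under Poisson--PPS it rests on the design-versus-model decoupling in~\ref{ass:superpopulation} combined with the Ces\`aro control from~\ref{ass:clt-design}, together with a Slutsky argument to absorb the joint randomness of $W_\gamma$ and of the summands.
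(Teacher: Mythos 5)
Your proposal is correct in substance and follows essentially the same route as the paper: exact evaluation of the conditional variance using the conditional independence of the Poisson--PPS indicators, rewriting it as a weighted average $\sum_{i} w_{\gamma i}\,\psi_{h_\gamma}(Y_{\gamma i})\psi_{h_\gamma}(Y_{\gamma i})\tr$ with normalized weights $w_{\gamma i}=\frac{(1-\pi_{\gamma i})/\pi_{\gamma i}}{\sum_{j}(1-\pi_{\gamma j})/\pi_{\gamma j}}$, a triangular-array weighted LLN (the paper's Lemma~\ref{lem:clt-weighted-lln}, fed by the no-dominant-unit condition in Assumption~\ref{ass:clt-design}), and the approximate identity $\|K_{h_\gamma}*\phi_g-\phi_g\|_{L^2(g)}\to0$ (the paper's Lemma~\ref{lem:clt-approx-identity}). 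The only genuine difference is the order of the two limits: the paper applies the weighted LLN first, obtaining $\bm\Sigma_{h_\gamma}:=\E_{G}[\psi_{h_\gamma}(Y)\psi_{h_\gamma}(Y)\tr]$, and then sends $\bm\Sigma_{h_\gamma}\to\bm\Sigma$ deterministically, whereas you swap $\psi_{h_\gamma}$ for $\phi_g$ inside the random weighted empirical measure (Cauchy--Schwarz plus a conditional Markov bound) and only then invoke the weighted LLN for $\phi_g\phi_g\tr$; both orderings use the same assumptions, the paper's being marginally cleaner because its kernel-approximation step concerns deterministic matrices. One caution on the ``main obstacle'' you flag: the worry that the limit could be a $w$-tilted expectation is real under PPS when $Z$ and $Y$ are correlated, but your proposed cure (replacing $N_\gamma^{-1}\sum_j Z_{\gamma j}$ by $\E[Z]$ and appealing to the design-given-$Z$ decoupling in Assumption~\ref{ass:superpopulation}) does not remove it, since $w_{\gamma i}$ remains a function of $Z_{\gamma i}$, which is dependent with $Y_{\gamma i}$. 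The paper does not engage with this point at all; it simply applies Lemma~\ref{lem:clt-weighted-lln}, whose Chebyshev proof treats the weights as exogenous, and reads off the untilted limit. So your treatment of that step is no less rigorous than the paper's, just more laboured, and you should not present the decoupling argument as if it settled the issue.
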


\begin{theorem}\label{thm:clt}
Under assumptions~\ref{ass:clt-design}--\ref{ass:clt-lindeberg} the asymptotic distribution of the MHDE $\hat\theta_
\gamma$ under Poisson--PPS is Gaussian:
$$
\sqrt{\tneff} (\hat\theta_\gamma - \theta_0) \Rightarrow 
  N_p(\mat 0, \mat A^{-1} \bm\Sigma \mat A^{-\intercal}).
$$
For fixed-size SRS--WOR, the covariance matrix must be multiplied by the FPC $(1 - \alpha_\gamma)$.
\end{theorem}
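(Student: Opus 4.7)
The plan is to follow the standard M-estimator argument: Taylor-expand the first-order condition $\nabla_\theta \Gamma_\gamma(\hat\theta_\gamma) = \mat 0$ around $\theta_0$ to obtain
\[
\sqrt{\tneff}(\hat\theta_\gamma - \theta_0) = -\bigl[\nabla_\theta^2 \Gamma_\gamma(\tilde\theta_\gamma)\bigr]^{-1}\sqrt{\tneff}\,\nabla_\theta \Gamma_\gamma(\theta_0),
\]
for a mean-value $\tilde\theta_\gamma$ between $\hat\theta_\gamma$ and $\theta_0$. Consistency from Theorem~\ref{thm:mhde-consistency} places $\tilde\theta_\gamma$ in the neighborhood of $\theta_0$ where the $C^2$ domination of Assumption~\ref{ass:clt-smooth} applies, so combined with $\|\hat f_\gamma - g\|_{L^1} \to 0$ from Theorem~\ref{thm:ht-kde-consistency} one gets $-\nabla_\theta^2 \Gamma_\gamma(\tilde\theta_\gamma) \xrightarrow{\;\Pr\;} \mat A$. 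By Slutsky it then suffices to prove the CLT $\sqrt{\tneff}\,\nabla_\theta \Gamma_\gamma(\theta_0) \Rightarrow N_p(\mat 0, \bm\Sigma)$.

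The linearization engine is the identity
\[
\sqrt{\hat f_\gamma(y)} - \sqrt{g(y)} = \frac{\hat f_\gamma(y) - g(y)}{2\sqrt{g(y)}} - \frac{(\sqrt{\hat f_\gamma(y)} - \sqrt{g(y)})^2}{2\sqrt{g(y)}},
\]
which, upon multiplying by $\tfrac{1}{2}u_{\theta_0}(y)\sqrt{f_{\theta_0}(y)}$ and integrating (using $\nabla_\theta\Gamma(\theta_0) = 0$), gives
\[
\nabla_\theta\Gamma_\gamma(\theta_0) = \int \phi_g(y)\bigl(\hat f_\gamma(y) - g(y)\bigr) \dx y - R_\gamma.
\]
The remainder $R_\gamma$ reduces via Cauchy--Schwarz to the weighted norm $\int(\hat f_\gamma - g)^2/g \dx y$, which Assumption~\ref{ass:clt-local-risk} bounds on $A_R$ by $C_R(\neff h_\gamma^d)^{-1} + h_\gamma^{2\beta}$ and on $A_R^c$ by $\tau_R(h_\gamma)$. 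With $R \uparrow \infty$ chosen slowly and Assumption~\ref{ass:clt-bandwidth} in force, this yields $\sqrt{\tneff}\,R_\gamma = o_\Pr(1)$.

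For the linear term, decompose
\[
\hat f_\gamma - g = (T_\gamma - \bar f_{\gamma,h}) + (\bar f_{\gamma,h} - K_{h_\gamma}*g) + (K_{h_\gamma}*g - g) + (S_\gamma^{-1} - 1)T_\gamma.
\]
Integrating the design piece $T_\gamma - \bar f_{\gamma,h}$ against $\phi_g$ produces exactly $\tfrac{1}{N_\gamma}\sum_{i\in\fp}(\delta_{\gamma i}/\pi_{\gamma i} - 1)\psi_{h_\gamma}(Y_{\gamma i})$, whose conditional (on $\{Y_{\gamma i}\}$) Lindeberg CLT and variance limit are precisely Assumption~\ref{ass:clt-lindeberg}; the unconditional limit follows from the in-probability convergence of the conditional variance to the deterministic $\bm\Sigma/\tneff$. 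The superpopulation term is $O_\Pr(N_\gamma^{-1/2}) = o_\Pr(\tneff^{-1/2})$ since $\tneff \le N_\gamma$; the deterministic bias integral is $O(h_\gamma^\beta) = o(\tneff^{-1/2})$ by Assumptions~\ref{ass:clt-kernel} and~\ref{ass:clt-bandwidth}; and the self-normalization residual is $O_\Pr(\tneff^{-1/2})$ by Assumption~\ref{ass:clt-design}, again $o_\Pr(\tneff^{-1/2})$ after integration against the mean-zero $\phi_g$. Under SRS--WOR the design variance carries the extra $(1-\alpha_\gamma)$ factor, producing the FPC in the stated covariance.

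The main technical obstacle is the quadratic remainder $R_\gamma$: the factor $1/\sqrt g$ is unbounded in general, so blunt bounds in terms of $\|\hat f_\gamma - g\|_{L^1}$ from Theorem~\ref{thm:ht-kde-consistency} are insufficient to beat the $\sqrt{\tneff}$ rate. Assumption~\ref{ass:clt-local-risk} is engineered precisely to bypass this by splitting integration into a compact $A_R$ (where $g \ge c_R > 0$) and a tail $A_R^c$ absorbed by $\tau_R(h_\gamma)$; the careful calibration of $R = R_\gamma \uparrow \infty$ against the three bandwidth conditions in Assumption~\ref{ass:clt-bandwidth} is the delicate book-keeping step that underlies the entire argument.
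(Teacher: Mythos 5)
Your overall architecture is the one the paper uses: your pointwise identity for $\sqrt{\hat f_\gamma}-\sqrt g$ is exactly the paper's factor $R_h=2\sqrt g/(\sqrt{\hat f_\gamma}+\sqrt g)$ written out; the four-way split of $\hat f_\gamma-g$ into design noise $T_\gamma-\bar f_{\gamma,h}$, i.i.d.\ smoothing $\bar f_{\gamma,h}-K_{h_\gamma}*g$, kernel bias, and the self-normalizer matches the paper's decomposition; the HT fluctuation term, its conditional variance limit and the Lindeberg CLT from Assumption~\ref{ass:clt-lindeberg}, the negligibility of the other pieces under Assumptions~\ref{ass:clt-design}--\ref{ass:clt-local-risk}, and the final Taylor/Slutsky step with $-\nabla^2_\theta\Gamma_\gamma(\tilde\theta_\gamma)\to\mat A$ are all as in the paper.

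There is, however, a genuine gap at the step you yourself call the crux. The quadratic remainder is $R_\gamma=\int \phi_g\,(\sqrt{\hat f_\gamma}-\sqrt g)^2$, and it does \emph{not} ``reduce via Cauchy--Schwarz to $\int(\hat f_\gamma-g)^2/g$'': Cauchy--Schwarz against $\|\phi_g\|_{L^2(g)}$ leaves the fourth-power quantity $\bigl(\int(\sqrt{\hat f_\gamma}-\sqrt g)^4/g\bigr)^{1/2}$, while pulling $\phi_g$ outside the integral to reach the weighted $\chi^2$ norm requires $\sup|\phi_g|<\infty$, which is not assumed — Assumption~\ref{ass:clt-smooth} gives only $\phi_g\in L^2(g)$, and $\phi_g=\tfrac14 u_{\theta_0}\sqrt{f_{\theta_0}/g}$ is typically unbounded. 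The paper closes exactly this hole with Lemma~\ref{lem:clt-rem-hellinger-chisq}: truncate $\phi_g$ at a level $M$, bound the truncated part by $M\,H(\hat f_\gamma,g)\,\|u/\sqrt g\|_{2}$ (using $\int g\,|R_h-1|^2\le H^2$), absorb the tail into $\sqrt{\epsilon(M)}\,\|u/\sqrt g\|_{2}$ with $\epsilon(M)=\int_{\{|\phi_g|>M\}}\phi_g^2\,g$, and then send $M=M_\gamma\uparrow\infty$ slowly. Your localization via $A_R$ and Assumption~\ref{ass:clt-local-risk} handles the weighted norms of $T_\gamma-\bar f_{\gamma,h}$ and $\bar f_{\gamma,h}-K_{h_\gamma}*g$, but not the unboundedness of $\phi_g$ itself; without the truncation device (or an added boundedness hypothesis on $\phi_g$) the step as written fails. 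A smaller slip in the same passage: the design contribution to the weighted risk is of order $(\tneff\,h_\gamma^d)^{-1}$, not $(\neff\,h_\gamma^d)^{-1}$, because the conditional variance of the HT noise carries $(1-\pi_{\gamma i})/\pi_{\gamma i}$; since $\neff\le\tneff$ in general, your coarser rate is not the one that Assumption~\ref{ass:clt-bandwidth}'s condition $1/(\sqrt{\tneff}\,h_\gamma^d)\to0$ is calibrated against, although the correct computation goes through verbatim and then your bookkeeping for $\sqrt{\tneff}\,R_\gamma=o_\Pr(1)$ is right.
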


The proof borrows ideas from \citet{Cheng2006} and is given in Appendix~\ref{sec:proof-clt}.
Here we want to discuss a few important insights from the proof technique.

\begin{remark}
Our proof does not require a global lower bound on ${f_0}$.
All variance and bias controls in assumption~\ref{ass:clt-local-risk} localized on $A_R$ with a tail remainder $\tau_R(h)$ that can be driven to 0 by taking $R=R_\gamma \uparrow \infty$ slowly, uniformly over $h \leq h_0(R)$.
\end{remark}
\begin{remark}
The bandwidth assumption~\ref{ass:clt-bandwidth} is necessary to remove the kernel bias, the i.i.d.\ KDE smoothing noise of order $1 / \sqrt{N_\gamma h_\gamma^d}$ and also to leave the HT design fluctuation at the scale of $\sqrt{\tneff}$.
\end{remark}
\begin{remark}
Under SRS the assumptions reduce to the classical conditions $\sqrt{n_\gamma} h_\gamma^d \to \infty$ and $\sqrt{n_\gamma} h_\gamma^{2\beta}$, up to the FPC, as in i.i.d.\ MHDE analyses without global lower bound on $f$.
\end{remark}
\begin{remark}
For fixed-size PPS--WOR, assumption~\ref{ass:clt-design} would need to be replaced with the usual rejective design with $\sum_{i\in\fp}\delta_{\gamma i} = n_\gamma$ and first-order $\{\pi_{\gamma i}\}$.
The same FPC factor as with SRS--WOR appears asymptotically, and the rest of the statement is unchanged.
\end{remark}

\subsection{Robustness}

Finally, we turn to the robustness of the MHDE against contaminated superpopulations.
We define the estimator functional
$$
T(G) \in \argmax_{\theta \in \Theta} \Gamma_G(\theta),
$$
and the gradient of the population-level $\Gamma_G$ as $S_G(\theta) := \nabla_\theta \Gamma_G(\theta) = \frac{1}{2} \int u_\theta(y) \sqrt{f_\theta(y) g(y)}.
$
The MHDE~\eqref{eqn:mhde-def} targets $\theta_0 = T(G)$.
Note that the sampling design does not affect the functional, only the estimator.
Hence, the design does not affect the robustness properties.

We denote the contaminated superpopulation distribution by $G_\epsilon := (1 - \epsilon) G + \epsilon H$ with arbitrary contamination distribution $H$.
We work in the Hellinger topology, $H(f_1, f_2) := \| \sqrt{f_1} - \sqrt{f_2} \|_{L^2}$ and make the following assumptions about the model smoothness.

\begin{enumerate}[label=\textbf{A\arabic*}, resume=assumptions]
\item (Model smoothness and identifiability).\label{ass:infl-model}
\begin{enumerate}[label=(\roman*)]
    \item
    For each $y$, $\theta\mapsto \sqrt{f_\theta(y)}$ is twice continuously differentiable in a neighborhood $\mathscr N(\theta_0)$ of $\theta_0=T(G)$.

    \item
    There exists an envelope $e \in L^2(g)$ such that for all $\theta\in\mathscr N(\theta_0)$,
\begin{align*}
\left\|u_\theta \sqrt{f_\theta} \right\|_{L^2(g)} &\le \Big\| e \Big\|_{L^2(g)}, \\
\left\| \partial_\theta u_\theta \sqrt{f_\theta} \right\|_{L^2(g)} &\le \Big\| e \Big\|_{L^2(g)}.
\end{align*}

    \item
    $\theta_0$ is the unique maximizer of $\Gamma$ and there is a separation margin: $\forall \varepsilon>0, \exists \Delta(\varepsilon)>0$ s.t.,
$$
\sup_{\|\theta-\theta_0\|\ge\varepsilon} \Gamma(\theta) \le \Gamma(\theta_0)-\Delta(\varepsilon).
$$
    \item\label{ass:infl-model-hessian}
    The Hessian $-\nabla_\theta^2\Gamma(\theta)\MID_{\theta=\theta_0}$ exists and is nonsingular.
\end{enumerate}

\item (Directional Gateaux derivative in $F_0$).\label{ass:infl-gateaux}
Let $H$ be a finite signed measure on $(\R^d,\mathcal B)$ with $\int \|u_{\theta_0}(y)\|\sqrt{f_{\theta}(y)}\,\frac{\mathrm d|H|(y)}{\sqrt{g(y)}}<\infty$ (e.g., $H \ll G$ with density in $L^2(g)$, or $H=\Delta_z$ with $g(z)>0$ and finite integrand).
The Gateaux derivative of $S_G$,
$$
\dot S_{G}(\theta; H)\ :=\ \lim_{\varepsilon\downarrow 0}\frac{S_{G_\varepsilon}(\theta)-S_{G}(\theta)}{\varepsilon}
$$
exists for $\theta$ near $\theta_0$ and
$$
\dot S_{G}(\theta;H) = \frac14 \int u_\theta(y)\,\sqrt{f_\theta(y)}\,\frac{dH(y)}{\sqrt{g(y)}}.
$$

\end{enumerate}

Based on these assumptions, we derive the influence function \citep{Hampel1974} and the $\alpha$-influence curve to describe the estimator's behavior under small levels of contamination.
The proofs of the following theorem and corollary are given in the Appendix~\ref{sec:proof-robust}.

\begin{theorem}[Influence function]\label{thm:influence}
Under assumptions~\ref{ass:infl-model}--\ref{ass:infl-gateaux}, the functional $T$ is Gateaux differentiable at $G$ in direction $H$ and has influence function
$$
\text{IF}(H; T, G) := \frac{\mathrm{d}}{\mathrm d \varepsilon} T(G_\varepsilon) \MID_{\varepsilon=0} = -\mat Q^{-1} \dot S_{G}(\theta_0; H),
$$
with 
\begin{align*}
\mat Q := \nabla_\theta S_{G}(\theta) \MID_{\theta=\theta_0}
= \frac12 \int \left[ \left[\nabla_\theta u_\theta(y)\right] +\frac{1}{2}\,u_\theta(y) u_\theta(y)\tr \right]_{\theta=\theta_0} \sqrt{f_{\theta_0}(y)\, g(y)} \dx y.
\end{align*}
In particular, for a point mass $H = \Delta_z$ with $g(z) > 0$,
$
\text{IF}(z; T, G) = -\mat Q^{-1} \phi_{g}(z).
$
\end{theorem}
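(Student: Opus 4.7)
The plan is to treat $T$ as an implicitly defined M-estimator characterized by the stationarity equation $S_G(T(G))=0$, and to linearize $S_{G_\varepsilon}(T(G_\varepsilon))=0$ about $(\varepsilon,\theta)=(0,\theta_0)$ to read off the Gateaux derivative. Because $T(G)=\theta_0$ is an interior maximizer of the twice-differentiable map $\theta\mapsto\Gamma_G(\theta)$ by Assumption~\ref{ass:infl-model}(i), and because the $L^2(g)$-envelope in~\ref{ass:infl-model}(ii) justifies differentiation under the integral on a neighborhood $\mathscr N(\theta_0)$ of $\theta_0$, both $S_G(\theta_0)=0$ and $S_{G_\varepsilon}(T(G_\varepsilon))=0$ hold for all sufficiently small $\varepsilon$. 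As a preparatory step I would establish qualitative continuity $T(G_\varepsilon)\to\theta_0$, which follows from the separation margin in~\ref{ass:infl-model}(iii) together with the uniform bound $\sup_\theta|\Gamma_{G_\varepsilon}(\theta)-\Gamma_G(\theta)|\to 0$ (a consequence of $|\sqrt{a+b}-\sqrt{a}|\le\sqrt{|b|}$ applied to $g_\varepsilon-g=\varepsilon(h-g)$ and Cauchy--Schwarz, which is essentially Proposition~\ref{prop:continuity}).

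With continuity secured, the heart of the argument is the joint Taylor expansion
\[
0 \;=\; S_{G_\varepsilon}(T(G_\varepsilon)) \;=\; S_G(\theta_0) + \varepsilon\,\dot S_G(\theta_0;H) + \mat Q\,(T(G_\varepsilon)-\theta_0) + r(\varepsilon),
\]
where $r(\varepsilon)=o(\varepsilon+\|T(G_\varepsilon)-\theta_0\|)$. The $\varepsilon$-linear term is supplied directly by Assumption~\ref{ass:infl-gateaux}; the $\theta$-linear term identifies $\mat Q=\nabla_\theta S_G(\theta_0)$, whose explicit form in the theorem arises from applying the product rule to $S_G(\theta) = \tfrac12 \int u_\theta \sqrt{f_\theta g}\dx y$ together with $\partial_\theta \sqrt{f_\theta} = \tfrac12 u_\theta \sqrt{f_\theta}$. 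Since $\mat Q$ is invertible by~\ref{ass:infl-model}(iv), a short bootstrap (the $o(\|T(G_\varepsilon)-\theta_0\|)$ portion of $r$ is absorbed to the left) shows $\|T(G_\varepsilon)-\theta_0\|=O(\varepsilon)$; dividing by $\varepsilon$ and letting $\varepsilon\downarrow 0$ yields $\mathrm{IF}(H;T,G) = -\mat Q^{-1}\dot S_G(\theta_0;H)$. The point-mass specialization follows by evaluating the explicit expression in~\ref{ass:infl-gateaux} at $H=\Delta_z$, which collapses to $\tfrac14 u_{\theta_0}(z)\sqrt{f_{\theta_0}(z)/g(z)}=\phi_g(z)$.

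The main obstacle will be verifying the remainder bound $r(\varepsilon)=o(\varepsilon+\|T(G_\varepsilon)-\theta_0\|)$, which amounts to interchanging limits with integrals uniformly in the joint $(\varepsilon,\theta)$ expansion. The envelope hypothesis in~\ref{ass:infl-model}(ii) is the workhorse here: it dominates both $u_\theta\sqrt{f_\theta}$ and $\partial_\theta u_\theta\sqrt{f_\theta}$ on $\mathscr N(\theta_0)$, so dominated convergence transfers the pointwise Gateaux derivative of~\ref{ass:infl-gateaux} into a derivative of the solution $T(G_\varepsilon)$. A secondary subtlety concerns $H=\Delta_z$: the mixture $G_\varepsilon$ then has a singular component relative to $G$, so $\sqrt{g_\varepsilon}$ is not pointwise differentiable in $\varepsilon$. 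This is precisely sidestepped by taking the first-order term in~\ref{ass:infl-gateaux} as a given closed form; the remainder analysis afterwards only involves the $\theta$-variation on the $G$-absolutely-continuous part, which the envelope handles uniformly.
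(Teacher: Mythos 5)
Your proposal is correct and follows essentially the same route as the paper: both linearize the stationarity equation $S_{G_\varepsilon}(T(G_\varepsilon))=0$ around $(\theta_0,G)$, use the envelope in Assumption~\ref{ass:infl-model}(ii) to control the remainder, invoke nonsingularity of $\mat Q$ to solve for $T(G_\varepsilon)-\theta_0$, and specialize the closed form of $\dot S_G(\theta_0;\cdot)$ from Assumption~\ref{ass:infl-gateaux} to $H=\Delta_z$ to obtain $\phi_g(z)$. Your added steps (the preliminary continuity $T(G_\varepsilon)\to\theta_0$ via the separation margin, and the bootstrap giving $\|T(G_\varepsilon)-\theta_0\|=O(\varepsilon)$) merely make explicit what the paper's proof leaves implicit.
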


\begin{corollary}[$\alpha$-influence curve]\label{cor:alpha-influence}
For $G_\varepsilon$ with small $\varepsilon$,
$$
T(G_\varepsilon) = \theta_0 - \varepsilon \mat Q^{-1} \dot S_G(\theta_0; H) + O(\varepsilon^2).
$$
In particular, for point-contamination at $z$, $H=\Delta_z$, with $g(z) > 0$, $T(G_\varepsilon) = \theta_0 - \varepsilon \mat Q^{-1} \phi_g(z) + O(\varepsilon^2)$.
\end{corollary}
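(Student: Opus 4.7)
The plan is to take the first-order optimality condition $S_{G_\varepsilon}(T(G_\varepsilon)) = 0$ as an implicit equation for $\theta_\varepsilon := T(G_\varepsilon)$ and Taylor-expand jointly in $(\varepsilon, \theta)$ around $(0, \theta_0)$, upgrading the first-order Gateaux differentiability established in Theorem~\ref{thm:influence} to a quantitative $O(\varepsilon^2)$ remainder.

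First, I would verify that $\theta_\varepsilon \to \theta_0$ and that $\theta_\varepsilon$ is an interior maximizer for small $\varepsilon$. The uniform bound $|\Gamma_{G_\varepsilon}(\theta) - \Gamma(\theta)| = O(\varepsilon)$ on a neighborhood of $\theta_0$ (from the $L^2(g)$ envelope in Assumption~\ref{ass:infl-model}(ii)) combined with the separation margin in Assumption~\ref{ass:infl-model}(iii) forces $\theta_\varepsilon$ into any small ball around $\theta_0$ for $\varepsilon$ small enough; the nonsingular Hessian of Assumption~\ref{ass:infl-model}\ref{ass:infl-model-hessian} then secures interior optimality and the stationary equation $S_{G_\varepsilon}(\theta_\varepsilon) = 0$.

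Second, expanding that stationary equation around $(0, \theta_0)$ and using $S_G(\theta_0) = 0$ gives
$$
0 = \mat Q(\theta_\varepsilon - \theta_0) + \varepsilon\, \dot S_G(\theta_0; H) + R_\varepsilon,
$$
with $R_\varepsilon = O(\|\theta_\varepsilon - \theta_0\|^2) + O(\varepsilon\|\theta_\varepsilon - \theta_0\|) + O(\varepsilon^2)$. The first two remainder terms follow from the $C^2$-envelope controls of Assumption~\ref{ass:infl-model}(ii); the $O(\varepsilon^2)$ piece is the second-order expansion in $\varepsilon$ of $\sqrt{(1-\varepsilon)g + \varepsilon h}$ integrated against $u_{\theta_0}\sqrt{f_{\theta_0}}$, bounded by the integrability condition accompanying Assumption~\ref{ass:infl-gateaux}. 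Inverting the nonsingular $\mat Q$ and feeding the crude first-order bound $\|\theta_\varepsilon - \theta_0\| = O(\varepsilon)$ from Theorem~\ref{thm:influence} back into the remainder gives
$$
\theta_\varepsilon - \theta_0 = -\varepsilon\, \mat Q^{-1} \dot S_G(\theta_0; H) + O(\varepsilon^2).
$$
Specializing $\dot S_G(\theta_0; \Delta_z) = \phi_g(z)$ yields the point-mass form.

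The main obstacle is the second-order control of $\varepsilon \mapsto S_{G_\varepsilon}(\theta_0)$ when $H = \Delta_z$, since the pointwise Taylor remainder of $\sqrt{(1-\varepsilon)g(y) + \varepsilon\, dH(y)/d\lambda(y)}$ is ill-defined at $y = z$. I would handle this by mollifying $\Delta_z$ to $K_\eta(\cdot - z)$, carrying out the smooth expansion for the mollified contamination, and passing to the limit $\eta \downarrow 0$, using $g(z) > 0$ and continuity of $u_{\theta_0}\sqrt{f_{\theta_0}}$ at $z$ to bound the residuals uniformly. Continuity of the MHDE functional in the Hellinger topology (Proposition~\ref{prop:continuity}) then transfers the smooth-$H$ expansion to the point-mass case and closes the argument.
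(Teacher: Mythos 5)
Your core route is the paper's: the appendix proof of Theorem~\ref{thm:influence} also starts from the stationarity equation $S_{G_\varepsilon}(\theta_\varepsilon)=0$, Taylor expands around $(\theta_0,G)$, uses $S_G(\theta_0)=0$ and the nonsingularity of $\mat Q$, and the corollary is read off from that same expansion, with $\dot S_G(\theta_0;\Delta_z)=\phi_g(z)$ supplied \emph{directly} by Assumption~\ref{ass:infl-gateaux} (which postulates the existence and closed form of the Gateaux derivative for point masses with $g(z)>0$). Your preliminary consistency step is fine, except that the uniform bound is $O(\sqrt\varepsilon)$, not $O(\varepsilon)$: by Lemma~\ref{lem:hell-unif}, $\sup_\theta|\Gamma_{G_\varepsilon}(\theta)-\Gamma(\theta)|\le H(g_\varepsilon,g)\le\sqrt{2\varepsilon}$; this still suffices for $\theta_\varepsilon\to\theta_0$ via the separation margin.

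The genuine gap is your treatment of the pure-$\varepsilon$ second-order term and, especially, the mollification step for $H=\Delta_z$. First, the first-order integrability condition in Assumption~\ref{ass:infl-gateaux} does not control $\partial_\varepsilon^2 S_{G_\varepsilon}(\theta_0)$, which involves $\int \|u_{\theta_0}\|\sqrt{f_{\theta_0}}\,(h-g)^2/g_\varepsilon^{3/2}$; a genuinely second-order moment condition (or an explicit remainder hypothesis) is needed to claim $O(\varepsilon^2)$ rather than the $o(\varepsilon)$ the theorem's proof delivers. Second, the mollify-and-pass-to-the-limit plan does not close: for $K_\eta(\cdot-z)$ the corresponding second-order quantity blows up like $\eta^{-d}$, so your $O(\varepsilon^2)$ constant is not uniform in $\eta$ and the limits $\eta\downarrow0$, $\varepsilon\downarrow0$ cannot be interchanged; moreover Proposition~\ref{prop:continuity} cannot transfer the expansion, since $(1-\varepsilon)G+\varepsilon\Delta_z$ has no Lebesgue density and the mollified contaminated densities do not converge in the Hellinger ($L^2$ of square roots) sense to one. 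Indeed, taken literally, the atom contributes nothing to the affinity, $\Gamma_{G_\varepsilon}(\theta)=\sqrt{1-\varepsilon}\,\Gamma(\theta)$, so $T(G_\varepsilon)=\theta_0$ exactly; the point-mass statement is meaningful only through the formal derivative postulated in Assumption~\ref{ass:infl-gateaux}, which is precisely how the paper proceeds and which renders your mollification detour both unnecessary and, as sketched, unworkable.
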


\begin{remark}
All statements are made in the Hellinger topology.
The influence function holds for any direction $H$ satisfying Assumption~\ref{ass:infl-gateaux}.
In particular, for point-mass contamination $\Delta_z$, $g(z) > 0$ to avoid division by zero in $\sqrt{f_{\theta_0}(z) / g(z)}$.
For directions $H \ll G$ with density $h = \dx H/\dx G \in L^2(g)$, Assumption~\ref{ass:infl-gateaux} is always satisfied since $\int \| u_{\theta_0}\| \sqrt{f_{\theta_0}} h / \sqrt{g} \dx y = \int \| u_{\theta_0}\| \sqrt{f_{\theta_0} / g}\; h \dx G$ is finite under the $L^2(g)$ envelope.
\end{remark}

\section{Empirical Studies}\label{sec:empirical-studies}

To bring the theoretical properties derived above into perspective and compare with the maximum likelihood estimator, we conduct a large simulation study.
We then demonstrate the versatility of the MHDE~\eqref{eqn:mhde-def} by applying it in the National Health and Nutrition Examination Survey (NHANES)\citep{nhanes2025}.

\subsection{Simulation study}

We simulate data from a finite population of size $N\in\{10^6, 10^{6.5}, 10^7, 10^{7.5}, 10^8\}$ with two different sampling ratios $\alpha \in \{10^{-3}, 10^{-4}\}$.
The characteristic of interest follows a $\Gamma$ superpopulation model, $Y \sim \Gamma(2, 35\,000)$.
For Poisson--PPS we simulate a log-normal auxiliary variable $Z$ using different correlations with $Y$, $\rho_{YZ} \in \{0.25, 0.75\}$.
In Section~\ref{sec:suppl-gamma-calibrated} of the supplementary materials, we present the results with the survey weights calibrated to match known cluster totals.
The conclusions from the calibrated survey weights are similar to what is presented here.

To inspect the robustness properties of the MHDE, we introduce point-mass contamination in a fraction of the sampled observations.
Specifically, we replace $\lfloor \varepsilon n \rfloor$ observations in the sample with draws from a Normal distribution with mean $z \gg \E[Y]$ and variance $10^{-2} \Var(Y)$.
For ``independent contamination,'' the contaminated observations are chosen completely at random, while for ``high-leverage contamination,'' observations with higher sample weight are more likely to be contaminated, $\Pr(\text{obs. i is contaminated}) \propto (1 - \pi_{\gamma i})^{-10}$.
The supplementary materials (Section~\ref{sec:suppl-simres-tcont}) contain results for a scenario where the contamination comes from a truncated \textit{t} distribution with 3 degrees of freedom.

For each combination of simulation parameters, we present the relative absolute bias and the relative root mean square error across $R=100$ replications:
\begin{align*}
    \text{RelBias} := \frac{1}{\theta_0}\frac{1}{R} \sum_{r=1}^R(\hat\theta_r - \theta_0), &&
    \text{RelRMSE} := \frac{1}{\theta_0}\sqrt{ \frac{1}{R} \sum_{r=1}^R(\hat\theta_r - \theta_0)^2}.
\end{align*}

We compare the MHDE with the weighted maximum likelihood estimate (MLE) for the Gamma model.

\subsubsection{Results}

Figure~\ref{fig:sim-gamma-rel-bias} shows the relative bias of the MHDE and the MLE in the Gamma superpopulation model as the finite population size and the sample size increase.
When $N$ is sufficiently large, the bias is within $\pm 1\%$ for each parameter with both MHDE and MLE.
As expected, the variance of both estimators also decreases rapidly with increasing finite population size (Figure~\ref{fig:sim-gamma-rel-rmse}).

In the presence of contamination, the MHDE clearly shows its advantages over the MLE (Figure~\ref{fig:sim-gamma-infl}).
Overall, the estimates for the scale parameter of the Gamma superpopulation model are much more affected by contamination than the shape parameter.
Importantly, the influence function for the MHDE under independent and high-leverage contamination is bounded, whereas it is unbounded for the MLE.
From the $\alpha$-influence curve, we can further see that the MHDE can withstand up to 30\% high-leverage contamination before becoming unstable.
In the presence of independent contamination, on the other hand, the bias of the MHDE remains bounded even when approaching 50\% contamination.

We also verify the coverage of the asymptotic confidence intervals derived from Theorem~\ref{thm:clt} using $10\,000$ replications for $N=10^7$ and two sample sizes, $n \in \{1\,000, 10\,000\}$.
Table~\ref{tbl:sim-gamma-cis} summarizes the coverage and width of the 95\% confidence intervals for the different sampling strategies.
The coverage rate for SRS with and without replacement is very close to the nominal level.
For Poisson--PPS, on the other hand, the CI coverage is below the nominal level, likely due to the slightly higher bias observed also in Figure~\ref{fig:sim-gamma-rel-bias}.
However, this is not unique to the MHDE, but the MLE also suffers from the same issue in this setting.

In Section~\ref{sec:suppl-simres-lognormal} of the supplementary materials, we present a second simulation study using the log-normal distribution for $Y$.
The conclusions align with the Gamma model presented here, but the CI coverage is close to the nominal level for all sampling schemes.

\begin{figure}[t]
    \centering
    \includegraphics[width=1\linewidth]{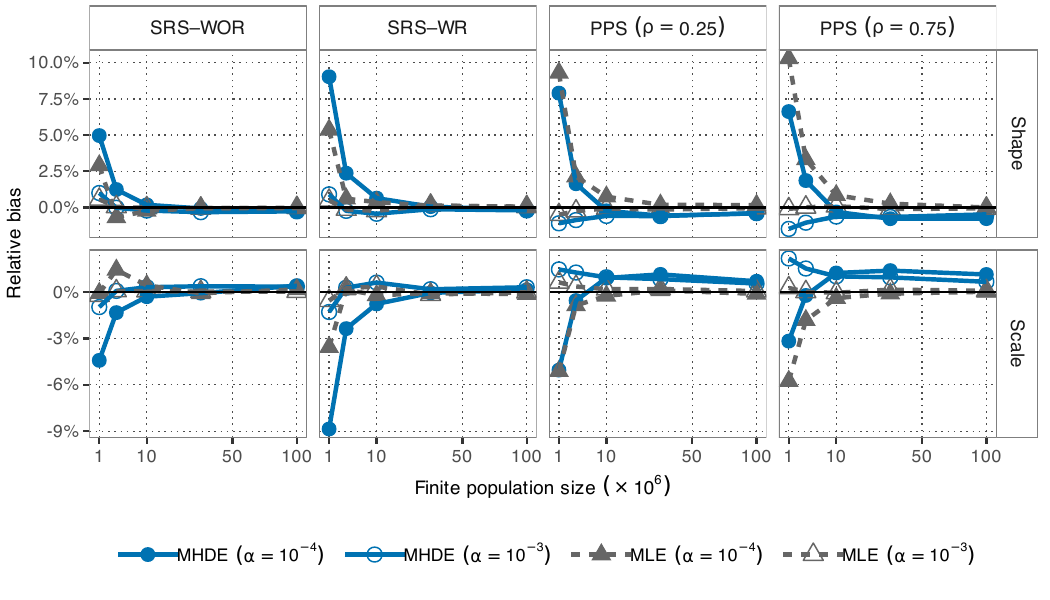}
    \caption{%
      Relative bias of the MHDE (blue dots) and MLE (gray triangles) in a Gamma superpopulation model using various sample designs.
      The sample size in each simulation is determined by $n=\alpha N$, with $\alpha \in \{10^{-3},10^{-4}\}$.%
      }
    \label{fig:sim-gamma-rel-bias}
\end{figure}

\begin{figure}[t]
    \centering
    \includegraphics[width=1\linewidth]{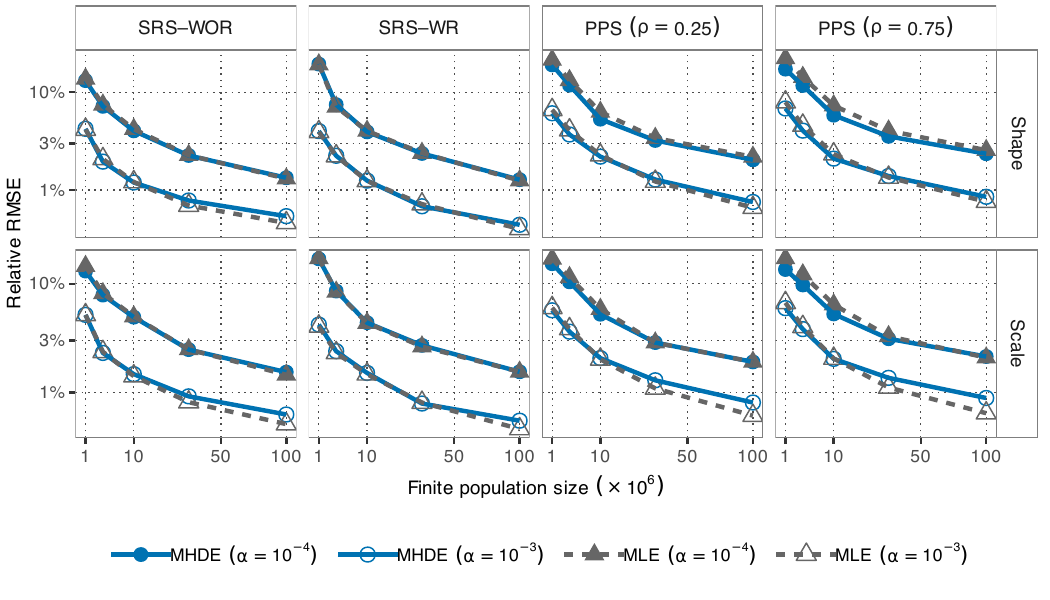}
    \caption{%
      Relative RMSE of the MHDE (blue dots) and MLE (gray triangles) under a Gamma superpopulation model using various sample designs.
      The sample size in each simulation is determine by $n=\alpha N$, with $\alpha \in \{10^{-3},10^{-4}\}$.%
      }
    \label{fig:sim-gamma-rel-rmse}
\end{figure}

\begin{figure}[t]
    \centering
    \includegraphics[width=1\linewidth]{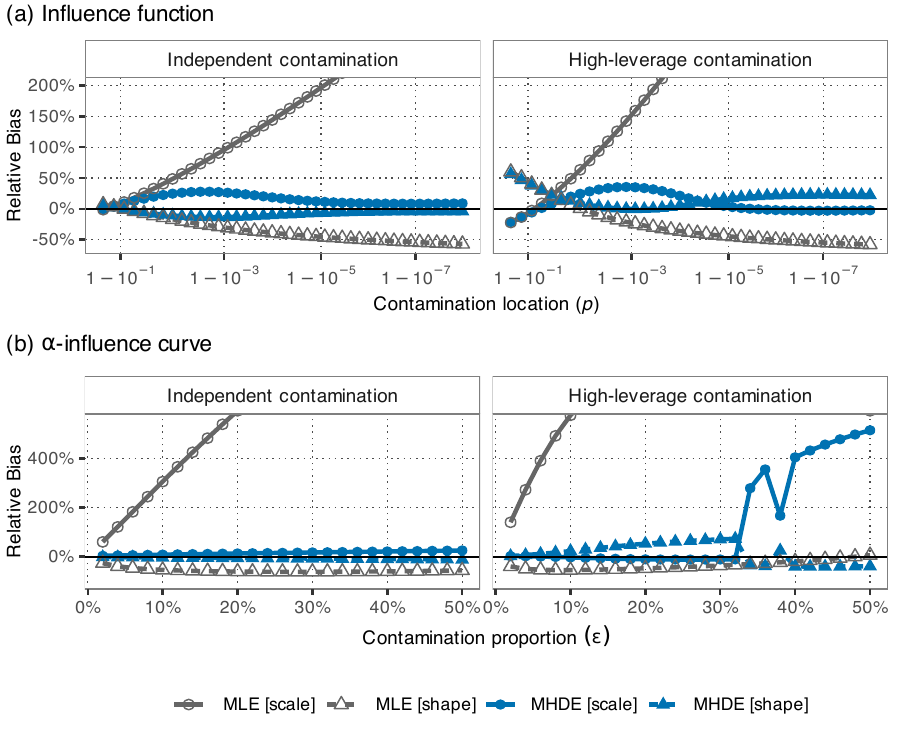}
    \caption{%
      Influence functions (top) and alpha curves (bottom) for the MHDE and MLE of the scale ($\circ$) and shape ($\triangle$) parameters in the Gamma model. %
      The contamination proportion in the influence function at the top is set to $\varepsilon=0.1$. 
      The horizontal axis shows the location of the point-mass contamination in terms of the quantile of the true superpopulation model, i.e., $z=G^{-1}(p)$. %
      For the alpha curve the point-mass contamination is located at $z=G^{-1}(1-10^{-7}) \approx 669\,193$.%
      }
    \label{fig:sim-gamma-infl}
\end{figure}

\begin{table}[t]
    \centering
    \begin{tabular}{rlrrrr}
    \toprule
                &                  & \multicolumn{2}{c}{Shape}     & \multicolumn{2}{c}{Scale}  \\
                                     \cmidrule(lr){3-4}              \cmidrule(lr){5-6}
    Sample size & Sampling scheme  & CI coverage & CI avg.\ width  & CI coverage & CI avg.\ width  \\
    \midrule
     $1\,000$ & PPS ($\rho=0.75)$ & 91.1\% & 9.9\% & 97.5\% & 11.5\% \\
              & SRS--WOR          & 95.8\% & 8.2\% & 94.3\% &  9.2\% \\
              & SRS--WR           & 95.4\% & 8.2\% & 94.0\% &  9.2\% \\
    \midrule
    $10\,000$ & PPS ($\rho=0.75)$ & 86.1\% & 3.1\% & 92.6\% &  3.6\% \\
              & SRS--WOR          & 95.2\% & 2.6\% & 94.9\% &  2.9\% \\
              & SRS--WR           & 95.2\% & 2.6\% & 94.8\% &  2.9\% \\
    \bottomrule
    \end{tabular}
    \caption{Coverage and average (relative) width of 95\% confidence intervals across $10\,000$ replicates of the MHDE estimates in the Gamma model with finite population size $N=10^7$.}
    \label{tbl:sim-gamma-cis}
\end{table}

\subsection{Application to NHANES}

We now analyze the total daily water consumption by U.S.\ residents as collected through the National Health and Nutrition Examination Survey (NHANES) \citep{nhanes2025}.
Over each 2-year period, NHANES surveys health, dietary, sociodemographic and other information from about 10,000 adults and children in the U.S.\ using several interviews, health assessments and other survey instruments spread over several days.
NHANES uses a complex survey design, and calibrated survey weights are reported separately for each part of the survey.
Here, we analyze the dietary interview data from the 2021--2023 survey cycle, specifically the total daily water consumption.
Each NHANES participant is eligible for two 24-hour dietary recall interviews.
In the 2021--2023 survey cycle, both interviews were conducted by telephone, as opposed to the first interview being conducted in person as in earlier iterations of NHANES.
This may decrease the reliability of the first interview compared to previous years.
In fact, three and four respondents reported drinking more than 10 liters a day in the first interview and the second interview, respectively.
These values are not only unusual, but can even lead to hyponatremia \citet{Adrogue2000}.

We fit a Gamma model and a Weibull model to the survey data, estimating the parameters using the proposed MHDE and the reference MLE.
Figure~\ref{fig:nhanes-day2} shows the fitted densities for these two models for the second day.
In both models, the MLE is shifted rightwards, apparently affected by the few unusually high values.
There is a single response of 44.2 liters/day with a sampling weight in the 99th percentile, which can have a devastating effect on the MLE.

In sample surveys, the interest is often in population statistics, like population averages or totals.
We can easily obtain these statistics and associated confidence intervals from the fitted superpopulation models.
Here, we estimate the effective sample size according to \citet{Kish1992} by $\widehat{n_\text{eff}} = (\sum_i w_i)^2 / (\sum_i w_i^2)$ since the inclusion probabilities are unknown.
In Table~\ref{tbl:nhanes-avg-estimates} we can again see that the MLE is shifted upward, likely due to the bias from the unreasonable outliers in the data.
The non-parametric estimates are computed using the weighted mean and median, with confidence intervals derived from the Taylor series expansion implemented in the survey R package \citet{Lumley2003}.
In general, the non-parametric estimates seem to agree with the MHDE estimates, with overlapping confidence intervals.
The ML estimates, on the other hand, are substantially higher.

\begin{figure}[t]
    \centering
    \includegraphics[width=1\linewidth]{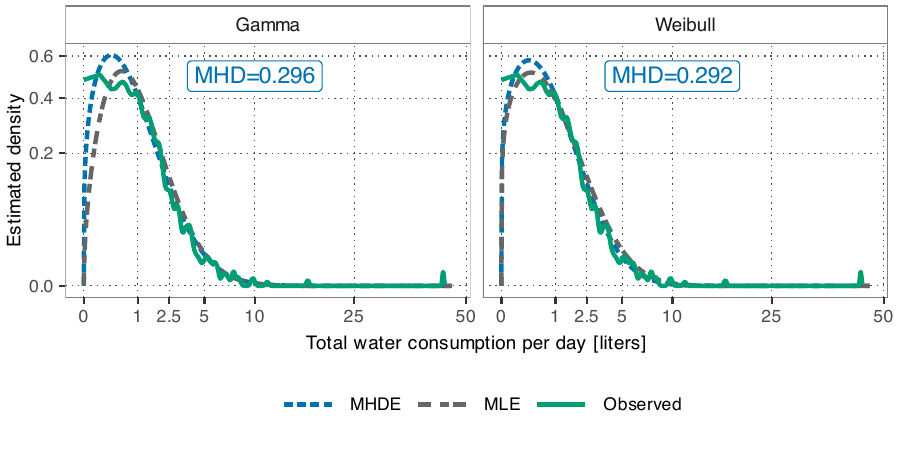}
    \caption{%
      MHDE and MLE estimates for two different parametric models to describe the total daily water consumption in the NHANES survey.
      The minimum Hellinger distance (MHD) is achieved by the MHDE shown here.%
      }
    \label{fig:nhanes-day2}
\end{figure}

\begin{table}[t]
    \centering
    \begin{tabular}{llrr}
    \toprule
    Model & Estimator & Mean [95\% CI] & Median [95\% CI] \\
    \midrule
    \multicolumn{2}{c}{\textit{Non-parametric}} 
                   & 1.26 [1.21, 1.31] & 1.01 [1.00, 1.08] \\
    Gamma   & MHDE & 1.32 [1.28, 1.36] & 0.99 [0.96, 1.02] \\
            & MLE  & 1.45 [1.41, 1.48] & 1.16 [1.13, 1.19] \\
    Weibull & MHDE & 1.32 [1.19, 1.35] & 1.02 [0.97, 1.17] \\
            & MLE  & 1.45 [1.37, 1.44] & 1.17 [1.20, 1.27] \\
    \bottomrule
    \end{tabular}
    \caption{%
        Population-level estimates of average total daily water consumption (in liters) using a non-parametric estimator and the MHDE and MLE for two different parametric models.
        The 95\% confidence intervals for the parametric estimates are Monte-Carlo approximations using $10\,000$ draws from the asymptotic distribution.%
    }
    \label{tbl:nhanes-avg-estimates}
\end{table}

\section{Discussion}

In this paper, we develop the Minimum Hellinger Distance Estimator (MHDE) with Horvitz-Thompson adjusted kernel density estimator for finite populations under various sampling designs.
In the superpopulation framework with potential model misspecification, we prove that the MHDE is consistent in the Hellinger topology and admits an asymptotic Normal distribution, with fully efficient covariance if the true distribution is in the parametric family.
We further derive the influence function and the $\alpha$-influence curve, showing that the MHDE is highly robust against contamination, including high-leverage points.
Our theory requires minimal assumptions on the true density and the sampling design, allowing for efficient estimation and valid inference even under post-stratification or calibration.
Hence, the MHDE is as efficient as the MLE if the superpopulation assumption is correct, but much more reliable and stable if the model is misspecified or the sample contaminated.

The MHDE is easy to implement for a wide class of parametric families with minimal adjustments.
In the numerical experiments, we applied the MHDE for the Gamma and Weibull models, but other models, such as the log-normal, are equally straightforward to implement.
The numerical experiments further underscore the utility of the MHDE in complex survey samples, particularly its stability under contamination and its versatility. 

The simplicity and efficiency our HT-adjusted MHDE make it an ideal candidate for complex survey samples and a wide range of superpopulation models.
While the focus in this paper is on the Hellinger distance, the techniques used in the proofs are expressively more general.
With appropriate adjustments to the assumptions, our results can be generalized to broader classes of divergences, such as power divergences \citep{Cressie1984} or $\phi$ divergences \citet{Pardo2006}.
A better understanding of the theoretical properties of more general HT-adjusted minimum divergence estimators is crucial to choosing the best estimator under different sampling strategies and contamination expectations.

\begin{appendices}

\section{Proof of Consistency}\label{sec:appendix-proof-consistency}

\subsection{Technical Lemmas}\label{sec:appendix-lemmas-consistency-kde}

\begin{lemma}[Self-normalization]\label{lem:self-normalization}
Let
\[
\widehat f_{\gamma}(y)=
\frac{\sum_{i\in\fp}\frac{\delta_{\gamma i}}{\pi_{\gamma i}}\,K_{h_\gamma}(y-Y_{\gamma i})}{\sum_{i\in\fp}\frac{\delta_{\gamma i}}{\pi_{\gamma i}}}
=\frac{T_\gamma(y)}{S_\gamma}.
\]

If $S_\gamma>0$, then $\int_{\mathbb R^d}\widehat f_{HT,\gamma}(y)\,dy=1$ almost surely.
\end{lemma}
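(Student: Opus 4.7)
The plan is to verify this by direct calculation, since the result is essentially an algebraic consequence of two facts: the kernel $K$ integrates to one (Assumption~\ref{ass:kernel-1}) and $T_\gamma$ and $S_\gamma$ share the same scaling $\frac{1}{N_\gamma}\sum_{i\in\fp}\delta_{\gamma i}/\pi_{\gamma i}$.

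First, on the event $\{S_\gamma>0\}$, the denominator is strictly positive so the ratio $T_\gamma(y)/S_\gamma$ is well-defined for every $y$. I would then integrate $T_\gamma$ over $\mathbb R^d$ and interchange the finite sum with the integral. Since $K\ge 0$ under Assumption~\ref{ass:kernel-1} and $\delta_{\gamma i}/\pi_{\gamma i}\ge 0$, Tonelli's theorem applies without any further justification. This gives
\[
\int_{\mathbb R^d} T_\gamma(y)\dx y
= \frac{1}{N_\gamma}\sum_{i\in\fp}\frac{\delta_{\gamma i}}{\pi_{\gamma i}}\int_{\mathbb R^d} K_{h_\gamma}(y-Y_{\gamma i})\dx y.
\]

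Next I would evaluate each inner integral. By the change of variables $u=(y-Y_{\gamma i})/h_\gamma$ and the definition $K_{h_\gamma}(x)=h_\gamma^{-d}K(x/h_\gamma)$, the Jacobian cancels the $h_\gamma^{-d}$ factor and yields $\int K(u)\dx u=1$ by Assumption~\ref{ass:kernel-1}. Hence every inner integral equals $1$ regardless of $Y_{\gamma i}$, and
\[
\int_{\mathbb R^d} T_\gamma(y)\dx y = \frac{1}{N_\gamma}\sum_{i\in\fp}\frac{\delta_{\gamma i}}{\pi_{\gamma i}} = S_\gamma.
\]
Dividing by $S_\gamma>0$ gives $\int \widehat f_\gamma(y)\dx y=1$ almost surely on $\{S_\gamma>0\}$.

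There is no real obstacle here: the argument is a pointwise manipulation that holds sample by sample, with no probabilistic content beyond the conditioning event $\{S_\gamma>0\}$. The only care needed is to invoke non-negativity (rather than integrability of the signed integrand) when swapping the sum and integral, which is immediate from Assumption~\ref{ass:kernel-1}.
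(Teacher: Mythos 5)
Your proof is correct and follows essentially the same route as the paper: interchange the finite sum with the integral, use the change of variables $u=(y-Y_{\gamma i})/h_\gamma$ and $\int K=1$ to get $\int T_\gamma = S_\gamma$, then divide by $S_\gamma>0$. The only cosmetic difference is that you invoke Tonelli/non-negativity where the paper cites Fubini (either is fine for a finite sum), and the paper additionally remarks that $\Pr(S_\gamma=0)\to 0$ under Poisson--PPS, which is outside the lemma's conditional statement anyway.
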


\begin{proof}
By Fubini and the change of variables $u=(y-Y_{\gamma i})/h_\gamma$,
\begin{align*}
\int T_\gamma(y)\,dy
  &=\frac1{N_\gamma}\sum_{i=1}^{N_\gamma}\frac{\delta_{\gamma i}}{\pi_{\gamma i}}\int K_{h_\gamma}(y-Y_{\gamma i})\,dy \\
  &= \frac1{N_\gamma}\sum_{i=1}^{N_\gamma}\frac{\delta_{\gamma i}}{\pi_{\gamma i}}\int K(u)\,du
  =\frac1{N_\gamma}\sum_{i=1}^{N_\gamma}\frac{\delta_{\gamma i}}{\pi_{\gamma i}}=S_\gamma,
\end{align*}
since $K$ integrates to one by assumption~\ref{ass:kernel-1}.
Hence $\int \widehat f_{\gamma}=S_\gamma/S_\gamma=1$ on $\{S_\gamma>0\}$. Under Poisson–PPS with $n_\gamma=\sum_i\pi_{\gamma i}\to\infty$, $\Pr(S_\gamma=0)\le e^{-n_\gamma}\to0$; under sampling without replacement, $S_\gamma>0$ deterministically.
\end{proof}

\begin{lemma}[Bernstein, independent case]\label{lem:bernstein-ind}
Let $X_1,\dots,X_m$ be independent, $\E X_i=0$, $|X_i|\le b$, and 
$\sum_{i=1}^m \Var(X_i)\le v$. Then for all $t>0$,
\[
\Pr\Big(\sum_{i=1}^m X_i \ge t\Big)
\;\le\; \exp\!\left(-\,\frac{t^2}{2\,(v + b t/3)}\right),
\quad
\Pr\Big(\big|\sum_{i=1}^m X_i\big| \ge t\Big)\;\le\;2\,\exp\!\left(-\,\frac{t^2}{2\,(v + b t/3)}\right).
\]
\end{lemma}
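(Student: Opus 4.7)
My plan is to prove this classical Bernstein inequality via the standard Chernoff/exponential moment method. First I would fix $\lambda \in (0, 3/b)$ and apply Markov's inequality to $e^{\lambda \sum_i X_i}$, which by independence factorizes to
\[
\Pr\Bigl(\sum_i X_i \ge t\Bigr) \le e^{-\lambda t} \prod_{i=1}^m \E\bigl[e^{\lambda X_i}\bigr].
\]
The core estimate is then an MGF bound for each centered, bounded $X_i$. I would establish that whenever $|X| \le b$, $\E X = 0$, and $0 < \lambda < 3/b$,
\[
\E\bigl[e^{\lambda X}\bigr] \le \exp\!\left(\frac{\lambda^2 \Var(X)/2}{1 - \lambda b/3}\right).
\]
The usual route is to expand $e^{\lambda X} = 1 + \lambda X + \sum_{k \ge 2} (\lambda X)^k / k!$, note $\E X = 0$, and dominate higher moments by $\E[X^k] \le b^{k-2} \Var(X)$, giving $\E[e^{\lambda X}] \le 1 + \lambda^2 \Var(X) \sum_{k \ge 2} (\lambda b)^{k-2}/k!$. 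Using $1/k! \le 1/(2 \cdot 3^{k-2})$ for $k \ge 2$ sums the geometric series and then the bound $1 + u \le e^u$ yields the displayed inequality.

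Next I would multiply these bounds across $i$, using $\sum_i \Var(X_i) \le v$, to obtain
\[
\Pr\Bigl(\sum_i X_i \ge t\Bigr) \le \exp\!\left(-\lambda t + \frac{\lambda^2 v / 2}{1 - \lambda b/3}\right),
\]
valid for $0 < \lambda < 3/b$. The final step is to optimize the exponent in $\lambda$. The choice $\lambda = t/(v + bt/3)$ (which lies in the allowed range since $\lambda b/3 = bt/(3v + bt) < 1$) makes $1 - \lambda b/3 = v/(v + bt/3)$ and produces the stated right-hand side $\exp\bigl(-t^2/[2(v + bt/3)]\bigr)$ after simplification.

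The two-sided inequality follows immediately by applying the one-sided bound to the variables $-X_i$ (which share the same variance and bound) and taking the union bound, paying a factor of $2$. I do not anticipate a real obstacle here; the only slightly delicate step is the MGF bound, where one must be careful to keep the constant $1/3$ in the denominator rather than a larger constant, since this is what produces the exact form $v + bt/3$ advertised in the statement.
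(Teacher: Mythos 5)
Your proof is correct: the MGF bound $\E[e^{\lambda X}] \le \exp\bigl(\lambda^2 \Var(X)/(2(1-\lambda b/3))\bigr)$ via the series expansion with $k! \ge 2\cdot 3^{k-2}$, the product over independent terms, and the optimizing choice $\lambda = t/(v+bt/3)$ indeed yield exactly $\exp\bigl(-t^2/[2(v+bt/3)]\bigr)$, and the two-sided bound follows by symmetry and a union bound. Note that the paper states Lemma~\ref{lem:bernstein-ind} without proof, treating it as the classical Bernstein inequality (it only proves the without-replacement variant, Lemma~\ref{lem:bernstein-wor}, via a martingale/Freedman argument), so your Chernoff-type derivation supplies the standard argument the paper implicitly relies on and there is no conflicting approach to compare against.
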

The next lemma is a variant as applied to simple random sampling without replacement (SRSWOR).

\begin{lemma}[Bernstein under WOR (Serfling–type)]\label{lem:bernstein-wor}
Let a finite population $\{y_1,\dots,y_N\}\subset\mathbb R$ have mean $\mu=\frac1N\sum_{i=1}^N y_i$, range $|y_i-\mu|\le B$, and population variance
\(
\sigma_N^2=\frac1N\sum_{i=1}^N (y_i-\mu)^2.
\)
Draw a sample of size $m$ \emph{without replacement} and let $X_1,\dots,X_m$ be the sampled values in any order.
Then, with $v_{\text{wor}} := \frac{N-m}{N-1}\,m\,\sigma_N^2$, for all $t>0$,
$$
\Pr\left(\sum_{k=1}^m (X_k-\mu)\ \ge\ t\right) <
\exp\left(
-\frac{t^2}{2\left(v_{\text{wor}} + B t/3\right)}
\right).
$$
Equivalently, compared to the independent (with‑replacement) Bernstein bound with variance proxy $v_{\text{ind}}:=m\,\sigma_N^2$, the WOR bound holds with the \emph{finite‑population correction} $v_{\text{wor}}=\frac{N-m}{N-1}\,v_{\text{ind}}$.
\end{lemma}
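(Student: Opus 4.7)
The plan is to follow Serfling's (1974) martingale approach, as sharpened for the Bernstein-type tail by Bardenet and Maillard (2015). The strategy combines the Chernoff exponential-moment method with a carefully constructed martingale whose predictable quadratic variation carries the finite-population correction factor $(N-m)/(N-1)$.

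First, by the Chernoff inequality, for any $\lambda\in(0,3/B)$,
$$
\Pr\!\left(\sum_{k=1}^m (X_k-\mu)\ge t\right)\le \exp(-\lambda t)\,\E\!\left[\exp\!\left(\lambda\sum_{k=1}^m(X_k-\mu)\right)\right].
$$
The task reduces to bounding this moment generating function by $\exp\bigl(\lambda^2 v_{\text{wor}}/(2(1-\lambda B/3))\bigr)$, after which the optimization $\lambda^\ast = t/(v_{\text{wor}}+Bt/3)$ exactly as in Lemma~\ref{lem:bernstein-ind} delivers the stated tail. As a warmup, Hoeffding's convex-comparison lemma (Hoeffding, 1963, Thm.~4) gives $\E\phi(S_m)\le \E\phi(S_m^{\text{ind}})$ for every continuous convex $\phi$; applied to $\phi(s)=e^{\lambda s}$ with Lemma~\ref{lem:bernstein-ind}, this already yields a Bernstein-type bound with $v_{\text{ind}}=m\sigma_N^2$ in place of $v_{\text{wor}}$, losing exactly the FPC factor $(N-m)/(N-1)$.

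To recover the FPC, I would decompose $\sum_{k=1}^m (X_k-\mu)$ as a martingale $\sum_k D_k$ under the filtration $\mathcal F_k:=\sigma(X_1,\dots,X_k)$, with increments $D_k := X_k - \E[X_k\mid\mathcal F_{k-1}]$ and predictable compensator derived from the exchangeability identity $\E[X_k\mid\mathcal F_{k-1}]=(N\mu-S_{k-1})/(N-k+1)$. Each conditional MGF $\E[e^{\lambda D_k}\mid\mathcal F_{k-1}]$ is bounded by the standard Bennett--Bernstein exponential using $|D_k|\le B$ and the conditional variance $\sigma_k^2$, and a Freedman-type supermartingale argument then integrates these nested MGFs. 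The classical Wilks identity $\Var(S_m)=(N-m)/(N-1)\cdot m\sigma_N^2$ identifies $\E\sum_{k=1}^m\sigma_k^2 = v_{\text{wor}}$, and a reverse-time reindexing of the sampling order (Bardenet--Maillard) converts the predictable variation into the variance of the still-unsampled units, which is $\le v_{\text{wor}}$ almost surely and carries the FPC.

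The main obstacle is precisely this last step: the forward predictable variation $\sum_{k=1}^m \sigma_k^2$ is not almost-surely bounded by $v_{\text{wor}}$ but only in expectation, so a direct conditional-Bernstein bound fails to yield the FPC. Resolving this requires reversing the sampling order so that $\sigma_k^2$ becomes the variance of the remaining subpopulation, which shrinks deterministically as units are drawn — a self-contained technical reindexing once the martingale and exponential-moment framework are in place. With the MGF bound in hand, substituting into the Chernoff inequality and optimizing $\lambda$ completes the proof exactly as in Lemma~\ref{lem:bernstein-ind}.
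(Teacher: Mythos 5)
Your overall skeleton (Chernoff/Freedman applied to the Doob martingale with increments $D_k = X_k - \E[X_k\mid\mathcal F_{k-1}]$, together with the finite-population identity $\E\big[\sum_{k\le m}\Var(D_k\mid\mathcal F_{k-1})\big]=\Var\big(\sum_{k\le m}X_k\big)=v_{\text{wor}}$) is essentially the paper's, and you correctly isolate the crux: the predictable quadratic variation is controlled only in expectation, not almost surely. The genuine gap is that your proposed resolution of that crux does not work as stated. The conditional variance at step $k$ is the variance of the $N-k+1$ still-unsampled values about \emph{their own} current mean; deterministically this is only bounded by $\frac{1}{N-k+1}\sum_{i\in\text{remaining}}(y_i-\mu)^2\le \frac{N}{N-k+1}\,\sigma_N^2$, and summing over $k\le m$ gives a quantity of order $m\sigma_N^2\cdot\frac{N}{N-m+1}$, which \emph{exceeds} $v_{\text{ind}}=m\sigma_N^2$, let alone $v_{\text{wor}}$. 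The remaining-population variance is genuinely random and can be larger than $\sigma_N^2$ (e.g., when the early draws happen to fall near $\mu$), so ``reverse-time reindexing'' does not make the quadratic variation almost surely $\le v_{\text{wor}}$. In Serfling's and Bardenet--Maillard's arguments the finite-population factor does not arise from such an almost-sure domination: it comes from the specific weighted martingale $Z_k=(S_k-k\mu)/(N-k)$, whose increment weights sum to produce the $(1-f_{m-1})$ factor, and (for the Bernstein version) from an extra concentration step controlling the random remaining-population variance — which is precisely why their variance factor $\rho_m$ is not exactly $\frac{N-m}{N-1}$. As written, the sentence ``which is $\le v_{\text{wor}}$ almost surely'' is the entire theorem and is unproved (indeed false).

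For comparison, the paper keeps the unweighted Doob martingale and, after Freedman's inequality, handles the randomness of $V_m$ by a peeling argument over dyadic shells of $V_m$ using only $\E V_m=v_{\text{wor}}$; some device of this kind (a tail bound for $V_m$, the peeling step, or a switch to the Serfling weighted martingale) is what your plan is missing. Two smaller points: the increments satisfy only $|D_k|\le 2B$, not $B$, since the conditional mean of the remaining units can itself deviate from $\mu$ by up to $B$, so your linear term $Bt/3$ is not justified by the martingale route as described; and Hoeffding's convex-ordering reduction, while a correct warmup, cannot be upgraded to deliver the FPC, so it carries no weight toward the claimed bound.
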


\begin{proof}
Write $Z_k:=X_k-\mu$. Let $\mathcal F_k$ be the $\sigma$-field generated by the first $k$ draws, and consider the Doob (Hájek) decomposition
\[
M_k := \sum_{r=1}^k \left(Z_r - \E[Z_r\mid \mathcal F_{r-1}]\right),
\quad k=0,1,\dots,m,
\]
with $M_0=0$. Then $(M_k)_{k\le m}$ is a martingale and $M_m=\sum_{k=1}^m Z_k$ because $\sum_{k=1}^m \E[Z_k\mid \mathcal F_{k-1}]=0$ (the remaining population is always centered around its current mean and those means telescope to $0$; see Remark~\ref{rem:bernstein-wor-centering} below).

\paragraph*{Bounded increments.}
Each increment is bounded as
\[
|M_k-M_{k-1}| = \left| Z_k-\E[Z_k\mid \mathcal F_{k-1}] \right|
\le |Z_k|+|\E[Z_k\mid \mathcal F_{k-1}]|
\le 2B \ =: b .
\]

\paragraph*{Predictable quadratic variation.}
Define the predictable quadratic variation
\[
V_m := \sum_{k=1}^m \E\left[(M_k-M_{k-1})^2 \Mid \mathcal F_{k-1}\right]
=\sum_{k=1}^m \Var(Z_k\mid \mathcal F_{k-1}).
\]
Taking expectations and using the variance decomposition for martingales gives
\[
\E[V_m]=\Var\left(\sum_{k=1}^m Z_k\right).
\]
Under simple random sampling without replacement, the variance of the sample \emph{sum} is the classical finite‑population formula
$$
\Var\left(\sum_{k=1}^m X_k\right)
=
\frac{N-m}{N-1} m \sigma_N^2,
$$
hence $\E[V_m]=v_{\text{wor}}$ as claimed.

\paragraph*{Freedman's inequality and optimization.}
Freedman's inequality for martingales with bounded increments (e.g., Theorem 1.6 in \citet{Freedman1975}) states that for all $t,v>0$,
\[
\Pr\left(M_m\ge t \ \cap\ V_m\le v\right)\
\le
\exp\left\{-\frac{t^2}{2\,(v + b t/3)}\right\}.
\]
We use the standard peeling argument on the random $V_m$:
\begin{align*}
\Pr(M_m\ge t)
&= \sum_{j\ge 0} \Pr\left(M_m\ge t,\ V_m\in (2^{j-1}v_{\text{wor}}, 2^j v_{\text{wor}}]\right)\\
&\le\ \sum_{j\ge 0}\exp\left\{-\frac{t^2}{2\,(2^j v_{\text{wor}} + b t/3)}\right\}\\
&\le \ \exp\left\{-\frac{t^2}{2\,(v_{\text{wor}} + b t/3)}\right\},
\end{align*}
where the last inequality uses that the series is dominated by its first term (geometric decay in $j$ once $t$ is fixed). Substituting $b=2B$ and $v_{\text{wor}}$ yields the stated bound. The two‑sided tail follows by symmetry.

\end{proof}

\begin{remark}[Centering under WOR]\label{rem:bernstein-wor-centering}
At step $k$, conditional on $\mathcal F_{k-1}$, $X_k$ is uniformly distributed over the remaining $N-k+1$ units; its conditional mean equals the mean of the remaining values, which is $-\frac{1}{N-k+1}\sum_{r=1}^{k-1} Z_r$. Consequently, $\sum_{k=1}^m \E[Z_k\mid \mathcal F_{k-1}]=0$.
\end{remark}

\begin{remark}[Asymptotics and the $(1-\alpha)$ factor]\label{rem:bernstein-alpha-factor}
Since $\frac{N-m}{N-1}=(1-\alpha)\big(1-\tfrac{1}{N}\big)^{-1}$ with $\alpha=m/N$, the variance proxy satisfies $v_{\text{wor}}=(1-\alpha+o(1))\,m\sigma_N^2$ as $N\to\infty$.
Thus, in triangular‑array asymptotics with $N\to\infty$, replacing $v$ by $(1-\alpha)\,v$ in the independent Bernstein bound is correct up to a vanishing factor.
\end{remark}

\begin{lemma}[HT normalizer concentration]\label{lem:ht-normalizer-concentration}
Let $S_\gamma=\frac{1}{N_\gamma}\sum_{i=1}^{N_\gamma}\frac{\delta_{\gamma i}}{\pi_{\gamma i}}$. 
Under Poisson--PPS,
$$
\Pr\left(|S_\gamma-1|>t\right) \le 
2\exp\left(
-\frac{N_\gamma^2 t^2}{2\left(\sum_{i \in \fp}\frac{1-\pi_{\gamma i}}{\pi_{\gamma i}} + \frac{t N_\gamma}{3}\max_i \frac{1}{\pi_{\gamma i}}\right)}
\right)
\le
2\exp\left(-\,\frac{c \neff t^2}{1+c't}\right),
$$
for constants $c,c'>0$ under the regularity $\max_i \pi_{\gamma i}^{-1}\lesssim 1/\alpha_\gamma$. 
For WOR, multiply the denominator's variance term by $(1-\alpha_\gamma)$.
\end{lemma}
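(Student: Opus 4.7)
The plan is to recognize $N_\gamma(S_\gamma - 1)$ as a centered sum of independent bounded random variables under Poisson--PPS and apply Lemma~\ref{lem:bernstein-ind} (Bernstein) directly; the WOR variant then follows from Lemma~\ref{lem:bernstein-wor} with the standard finite-population correction in the variance proxy.

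First, write
\[
N_\gamma(S_\gamma - 1) = \sum_{i\in\fp} X_{\gamma i},
\qquad X_{\gamma i} := \frac{\delta_{\gamma i}}{\pi_{\gamma i}} - 1.
\]
Under Poisson--PPS the $\delta_{\gamma i}$ are independent Bernoulli$(\pi_{\gamma i})$, so the $X_{\gamma i}$ are independent and mean zero. A direct calculation, using that $X_{\gamma i}$ takes the two values $-1$ (w.p.\ $1-\pi_{\gamma i}$) and $\pi_{\gamma i}^{-1}-1$ (w.p.\ $\pi_{\gamma i}$), yields $\Var(X_{\gamma i}) = (1-\pi_{\gamma i})/\pi_{\gamma i}$ and $|X_{\gamma i}| \le \max_i \pi_{\gamma i}^{-1}$. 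Plugging $v = \sum_i (1-\pi_{\gamma i})/\pi_{\gamma i}$, $b = \max_i \pi_{\gamma i}^{-1}$, and the threshold $N_\gamma t$ into Lemma~\ref{lem:bernstein-ind} yields the first displayed inequality immediately.

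To pass to the cleaner form $2\exp\!\big(-c\,\neff t^2/(1+c't)\big)$, I would invoke two simple bounds. The design regularity Assumption~\ref{ass:design-regularity} gives $b \le c_0/\alpha_\gamma$ with high probability, and $v \le \sum_i \pi_{\gamma i}^{-1} = N_\gamma^2/\neff$. The key step is the Cauchy--Schwarz identity $N_\gamma^2 = \big(\sum_i \pi_{\gamma i}^{1/2}\pi_{\gamma i}^{-1/2}\big)^2 \le \big(\sum_i \pi_{\gamma i}\big)\big(\sum_i \pi_{\gamma i}^{-1}\big) = n_\gamma \sum_i \pi_{\gamma i}^{-1}$, which yields $\neff \le n_\gamma = N_\gamma\alpha_\gamma$. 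Dividing the Bernstein exponent through by $N_\gamma^2/\neff$, the linear term becomes at most $(2c_0/3)\,t\,\neff/(N_\gamma\alpha_\gamma) \le 2c_0 t/3$ while the variance term is at most $2$, giving the stated bound with, e.g., $c = 1/2$ and $c' = c_0/3$.

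For the WOR extension, under SRS--WOR the constraint $\sum_i\delta_{\gamma i}=n_\gamma$ combined with $\pi_{\gamma i}\equiv\alpha_\gamma$ forces $S_\gamma \equiv 1$ deterministically, so the claim is vacuous. Under rejective PPS--WOR the indicators are negatively associated (not independent), and concentration is obtained via the Hájek martingale decomposition exactly as in the proof of Lemma~\ref{lem:bernstein-wor}: the predictable quadratic variation is reduced by the finite-population factor $(1-\alpha_\gamma)$, which replaces $v$ by $(1-\alpha_\gamma)v$ in the Bernstein denominator. The only real technical obstacle is verifying that the martingale increments remain bounded by a constant multiple of $b$; this is inherited from the Hájek centering argument (see Remark~\ref{rem:bernstein-wor-centering}), and is precisely why Lemma~\ref{lem:bernstein-wor} is already framed as a Bernstein-type bound for sampling without replacement.
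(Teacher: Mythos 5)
Your proposal is correct and follows essentially the same route as the paper: write $N_\gamma(S_\gamma-1)=\sum_{i\in\fp}(\delta_{\gamma i}/\pi_{\gamma i}-1)$ as a centered sum of independent bounded variables, apply Lemma~\ref{lem:bernstein-ind} with threshold $N_\gamma t$, and invoke Lemma~\ref{lem:bernstein-wor} for the without-replacement variant. You additionally spell out the passage to the simplified bound via $v\le N_\gamma^2/\neff$, $b\le c_0/\alpha_\gamma$, and $\neff\le n_\gamma$, which the paper leaves implicit (the last fact appearing only in a later remark), so no gaps remain.
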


\begin{proof}
Write $S_\gamma-1=N_\gamma^{-1}\sum_i X_i$ with $X_i=(\delta_{\gamma i}/\pi_{\gamma i}-1)$, so $\E X_i=0$, $|X_i|\le \max_i \pi_{\gamma i}^{-1}$, 
$\sum \Var(X_i)=\sum (1-\pi_{\gamma i})/\pi_{\gamma i}$.
Apply Lemma~\ref{lem:bernstein-ind} with $t\gets N_\gamma t$; for WOR use Lemma~\ref{lem:bernstein-wor}.
\end{proof}

The next lemma is useful in establishing the first step of the proof of the Theorem~\ref{thm:ht-kde-consistency}.

\begin{lemma}[Three-way $L_1$ decomposition]\label{lem:threeWay}
Let
\begin{align*}
T_\gamma(y)=\frac{1}{N_\gamma}\sum_{i=1}^{N_\gamma}\frac{\delta_{\gamma i}}{\pi_{\gamma i}}\,K_{h_\gamma}(y-Y_{\gamma i}),
&&
S_\gamma=\frac{1}{N_\gamma}\sum_{i=1}^{N_\gamma}\frac{\delta_{\gamma i}}{\pi_{\gamma i}},
&&
\hat f_{\gamma}=\frac{T_\gamma}{S_\gamma},
\end{align*}
and let $\bar f_{\gamma,h} (y)=\frac{1}{N_\gamma}\sum_{i \in \fp}K_{h_\gamma}(y-Y_{\gamma i})$ be the i.i.d.\ kernel average.
Then
\begin{equation}\label{eq:threeWayDisplay}
\|\hat f_{\gamma}-f\|_{L^1}
\le
|S_\gamma^{-1}-1|
+
\|T_\gamma-\bar f_{\gamma,h}\|_{L^1}
+
\|\bar f_{\gamma,h}-f\|_{L^1}.
\end{equation}
\end{lemma}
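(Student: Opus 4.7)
The plan is a direct triangle-inequality argument in two stages, relying only on the non-negativity and unit integral of the kernel guaranteed by Assumption~\ref{ass:kernel-1}; no randomness or concentration enters. First, I would insert the i.i.d.\ kernel average $\bar f_{\gamma,h}$ to peel off the classical KDE bias:
$$
\hat f_\gamma - f \;=\; (\hat f_\gamma - \bar f_{\gamma,h}) + (\bar f_{\gamma,h} - f),
$$
which immediately gives the third summand $\|\bar f_{\gamma,h}-f\|_{L^1}$ in~\eqref{eq:threeWayDisplay}. All the design-specific structure has been isolated into the ``design piece'' $\|\hat f_\gamma - \bar f_{\gamma,h}\|_{L^1}$.

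Second, for that design piece I would exploit the algebraic identity
$$
\hat f_\gamma - \bar f_{\gamma,h}
\;=\; S_\gamma^{-1}\bigl(T_\gamma - \bar f_{\gamma,h}\bigr) + \bigl(S_\gamma^{-1}-1\bigr)\,\bar f_{\gamma,h},
$$
obtained by adding and subtracting $S_\gamma^{-1}\bar f_{\gamma,h}$. Taking $L^1$ norms and using that $\bar f_{\gamma,h}\ge 0$ with $\|\bar f_{\gamma,h}\|_{L^1}=1$ (by Fubini and $\int K=1$, exactly as in the proof of Lemma~\ref{lem:self-normalization}), the second summand contributes the clean term $|S_\gamma^{-1}-1|$, and the first contributes $S_\gamma^{-1}\|T_\gamma-\bar f_{\gamma,h}\|_{L^1}$.

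The only point of care is the prefactor $S_\gamma^{-1}$ on the middle term: on the event $\{S_\gamma\ge 1\}$ it is at most $1$ and the stated bound is immediate; off that event, the decomposition $S_\gamma^{-1}=1+(S_\gamma^{-1}-1)$ lets me absorb the excess into the $|S_\gamma^{-1}-1|$ summand (up to a lower-order cross term), which is harmless because Lemma~\ref{lem:ht-normalizer-concentration} will later place us on a high-probability event where $|S_\gamma^{-1}-1|=o(1)$. I do not anticipate a main obstacle---this is essentially a bookkeeping device whose purpose is to split the total error into (i) a normalization fluctuation controlled by the HT-normalizer concentration, (ii) a conditional design deviation of $T_\gamma$ from the i.i.d.\ kernel average to be controlled by Proposition~\ref{prop:design-ld}, and (iii) the classical i.i.d.\ KDE bias/variance term.
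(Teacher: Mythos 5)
Your decomposition is exactly the one the paper uses (insert $\bar f_{\gamma,h}$ via the triangle inequality, then split $\hat f_\gamma-\bar f_{\gamma,h}=S_\gamma^{-1}(T_\gamma-\bar f_{\gamma,h})+(S_\gamma^{-1}-1)\bar f_{\gamma,h}$ and use $\|\bar f_{\gamma,h}\|_{L^1}=1$), but your treatment of the prefactor when $S_\gamma<1$ does not prove the lemma as stated. Absorbing the excess through $S_\gamma^{-1}=1+(S_\gamma^{-1}-1)$ only yields
\begin{equation*}
\|\hat f_\gamma-\bar f_{\gamma,h}\|_{L^1}\;\le\;|S_\gamma^{-1}-1|+\|T_\gamma-\bar f_{\gamma,h}\|_{L^1}+|S_\gamma^{-1}-1|\,\|T_\gamma-\bar f_{\gamma,h}\|_{L^1},
\end{equation*}
which is weaker than \eqref{eq:threeWayDisplay}, and appealing to Lemma~\ref{lem:ht-normalizer-concentration} to dismiss the cross term turns a deterministic, pathwise inequality into a high-probability statement. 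Since the lemma is invoked pathwise inside the exact tail bounds of Theorem~\ref{thm:ht-kde-consistency}, that substitution is a genuine (if small) gap; the paper's own write-up is admittedly loose at the very same spot, since it bounds $S_\gamma^{-1}\le 1+|S_\gamma^{-1}-1|$ and then simply discards the resulting nonnegative product term, which is not a valid step either.

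The clean fix is a two-line case split, with no probability needed. If $S_\gamma\ge 1$ your argument is already complete because $S_\gamma^{-1}\le 1$. If $0<S_\gamma<1$, use instead the alternative split $\hat f_\gamma-\bar f_{\gamma,h}=(T_\gamma-\bar f_{\gamma,h})+(S_\gamma^{-1}-1)\,T_\gamma$ together with the identity $\|T_\gamma\|_{L^1}=S_\gamma$ (the same Fubini computation that gives $\|\bar f_{\gamma,h}\|_{L^1}=1$); the second term then contributes $(S_\gamma^{-1}-1)S_\gamma=1-S_\gamma\le S_\gamma^{-1}-1$, and \eqref{eq:threeWayDisplay} follows deterministically on $\{S_\gamma>0\}$.
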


\begin{proof}
Add and subtract $\bar f_{\gamma,h}$ and use the triangle inequality:
$$
\|\hat f_{\gamma}-f\|_1
\le
\|\hat f_{\gamma}-\bar f_{\gamma,h}\|_1
+
\|\bar f_{\gamma,h}-f\|_1.
$$
For the first term,
\[
\hat f_{\gamma}-\bar f_{\gamma,h}
=\frac{T_\gamma}{S_\gamma}-\bar f_{\gamma,h}
=\left(\frac{1}{S_\gamma}-1\right)\bar f_{\gamma,h}
+\frac{1}{S_\gamma}\left(T_\gamma-\bar f_{\gamma,h}\right),
\]
so by subadditivity of $\|\cdot\|_1$,
\[
\|\hat f_{\gamma}-\bar f_{\gamma,h}\|_1
\le
\left|\tfrac{1}{S_\gamma}-1\right|\ \|\bar f_{\gamma,h}\|_1
+
\tfrac{1}{S_\gamma}\ \|T_\gamma-\bar f_{\gamma,h}\|_1.
\]
Now $\|\bar f_{\gamma,h}\|_1=\int \bar f_{\gamma,h}(y) \dx y
= \frac1{N_\gamma}\sum_i \int K_{h_\gamma}(y-Y_{\gamma i})\dx y
=1$ because $\int K=1$.
Also, $S_\gamma>0$ with probability $1-o(1)$ (Poisson--PPS) or deterministically (WOR), and on $\{S_\gamma>0\}$ we may write
\[
\tfrac{1}{S_\gamma}\le 1+\left|\tfrac{1}{S_\gamma}-1\right|.
\]
Hence,
\[
\|\hat f_{\gamma}-\bar f_{\gamma,h}\|_1
\ \le\
\left|\tfrac{1}{S_\gamma}-1\right|
+\left(1+\left|\tfrac{1}{S_\gamma}-1\right|\right)\,\|T_\gamma-\bar f_{\gamma,h}\|_1.
\]
Finally, since $|1/S_\gamma-1|\,\|T_\gamma-\bar f_{\gamma,h}\|_1\ge 0$, we may drop that product term to obtain the simpler bound
\[
\|\hat f_{\gamma}-\bar f_{\gamma,h}\|_1
\ \le\
\left|\tfrac{1}{S_\gamma}-1\right|
+\|T_\gamma-\bar f_{\gamma,h}\|_1.
\]
Combine with the first display to conclude \eqref{eq:threeWayDisplay}.
\end{proof}

\subsection{Large-deviation bounds for the design term}\label{sec:appendix-design-ld}

To prove Proposition~\ref{prop:design-ld} we need the following technical lemmas.
Throughout this proof we use the definition of $\bar f_{\gamma,h}$ from Proposition~\ref{prop:design-ld}, set \(\xi_{\gamma i}:=\delta_{\gamma i}/\pi_{\gamma i}-1\) and define the signed measure
$$
\mu_\gamma:=\frac{1}{N_\gamma}\sum_{i\in\fp}\xi_{\gamma i} \delta_{Y_{\gamma i}},
$$
such that $T_\gamma-\bar f_{\gamma,h}=K_{h_\gamma}*\mu_\gamma$.
We further partition \(\R^d\) into half-open cubes \(\{B_{\gamma j}\}_{j=1}^{M_\gamma}\) with sides of length \(h_\gamma\) and centers \(c_{\gamma j}\).

\begin{lemma}[Cellwise smoothing reduction]\label{lem:cell-smooth}
Under assumption~\ref{ass:kernel-1} and defining
$$
S_\gamma(y):=(T_\gamma-\bar f_{\gamma,h})(y) = (K_{h_\gamma}*\mu_\gamma)(y),
$$
there exist constants $C_K,c,C'>0$ (depending only on $K$ and $d$) such that
\begin{align*}
\int_{\R^d} |S_\gamma(y)|\dx y
&\le
C_K\sum_{j=1}^{M_\gamma}\left|\mu_\gamma(B_{\gamma j})\right|
+ R_\gamma,
\;\text{and}\\
\Pr\left(R_\gamma>\tau\right) &\le C' e^{-c N_\gamma h_\gamma^{d}\,\min(\tau^2,\tau)}.
\end{align*}
\end{lemma}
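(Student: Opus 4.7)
The plan is a deterministic decomposition of $S_\gamma = K_{h_\gamma}*\mu_\gamma$ into a cellwise-quantized piece and a smoothing residual, followed by concentration of the residual's $L^1$ norm. For each $i\in\fp$ let $j(i)$ denote the cell index with $B_{\gamma j(i)}\ni Y_{\gamma i}$ and set $\Delta_i(y):=K_{h_\gamma}(y-Y_{\gamma i})-K_{h_\gamma}(y-c_{\gamma j(i)})$. Inserting $K_{h_\gamma}(y-Y_{\gamma i})=K_{h_\gamma}(y-c_{\gamma j(i)})+\Delta_i(y)$ into $S_\gamma$ yields
\[
S_\gamma(y) \;=\; \underbrace{\sum_{j=1}^{M_\gamma}\mu_\gamma(B_{\gamma j})\,K_{h_\gamma}(y-c_{\gamma j})}_{=:\tilde S_\gamma(y)} \;+\; \underbrace{\frac{1}{N_\gamma}\sum_{i\in\fp}\xi_{\gamma i}\,\Delta_i(y)}_{=:W_\gamma(y)}.
\]
Since $K\ge 0$ and $\int K_{h_\gamma}=1$, the triangle inequality immediately gives $\|\tilde S_\gamma\|_{L^1}\le \sum_j|\mu_\gamma(B_{\gamma j})|$, furnishing the main term with $C_K=1$. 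Setting $R_\gamma:=\|W_\gamma\|_{L^1}$, the rest of the proof must control its tail.

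The deterministic ingredient is the Lipschitz-$L^1$ control on $\Delta_i$: by the translation inequality under $\nabla K\in L^1$ (Assumption~\ref{ass:kernel-1}),
\[
\|\Delta_i\|_{L^1}\ \le\ h_\gamma^{-1}\|\nabla K\|_{L^1}\cdot|Y_{\gamma i}-c_{\gamma j(i)}|\ \le\ \tfrac{\sqrt d}{2}\|\nabla K\|_{L^1}\ =:\ C_K'.
\]
The naive triangle bound $R_\gamma\le (C_K'/N_\gamma)\sum_i|\xi_{\gamma i}|$ only yields $R_\gamma=O_\Pr(1)$, so the zero-mean structure of the $\xi_{\gamma i}$ must be exploited. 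I would use the dual formulation $R_\gamma=\sup_{\|g\|_\infty\le 1}\int g\,W_\gamma$ together with the rewrite
\[
\int g\,W_\gamma \;=\; \frac{1}{N_\gamma}\sum_{i\in\fp}\xi_{\gamma i}\,[\tilde g(Y_{\gamma i})-\tilde g(c_{\gamma j(i)})],
\qquad \tilde g:=K_{h_\gamma}^{\vee}*g,
\]
where $\|\tilde g\|_\infty\le 1$ and $\tilde g$ is $O(1/h_\gamma)$-Lipschitz. For each fixed $g$, conditional on $\{Y_{\gamma i}\}$ the summands are independent (Poisson--PPS), zero-mean, with range and variance proxy supplied by the sup/Lipschitz bounds on $\tilde g$; Bernstein (Lemma~\ref{lem:bernstein-ind}, with the WOR correction from Lemma~\ref{lem:bernstein-wor}) then produces the single-$g$ exponential tail. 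Extending to $\sup_g$ is handled by a chaining/covering argument on the equicontinuous smoothed class $\{\tilde g:\|g\|_\infty\le 1\}$, whose $L^\infty$-entropy on the effective support of $\bar f_{\gamma,h}$ is polynomial in $1/h_\gamma$ and absorbed into the constants. A peeling argument splitting the Gaussian (small $\tau$) and Poisson (large $\tau$) regimes of Bernstein delivers the claimed $\Pr(R_\gamma>\tau)\le C'\exp\{-c\,N_\gamma h_\gamma^{d}\min(\tau^2,\tau)\}$.

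The principal obstacle is the concentration step. A pointwise Bernstein on a single test function naturally yields an exponent proportional to $\neff$, and pushing it to the stated $N_\gamma h_\gamma^d$ requires carefully leveraging the bandwidth-induced localization (each $\Delta_i$ is supported within $O(h_\gamma)$ of the cell $B_{\gamma j(i)}$) when bounding the covering-entropy contribution against the kernel-smoothed envelope. Should the dual/chaining route prove unwieldy, a pragmatic alternative is to bound $R_\gamma$ cellwise on a $y$-grid of spacing $h_\gamma$: split $\int_{C_k}|W_\gamma|\le h_\gamma^d|W_\gamma(y_k)|+\int_{C_k}|W_\gamma(y)-W_\gamma(y_k)|\,dy$, apply Bernstein at each grid point $y_k$ (where the range and variance pick up the natural $h_\gamma^{-d}$ scale of $\Delta_i$), control the Lipschitz remainder by $\|\nabla K\|_\infty h_\gamma$, and union-bound over the $M_\gamma\asymp h_\gamma^{-d}$ grid points.
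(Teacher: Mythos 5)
Your treatment of the main term coincides with the paper's (it is exactly the deterministic quantization of Lemma~\ref{lem:conv-grid}, with $C_K=\|K\|_{L^1}$), and your diagnosis that the crude bound $R_\gamma\le (C_K'/N_\gamma)\sum_i|\xi_{\gamma i}|=O_\Pr(1)$ forces one to exploit the zero-mean design noise is correct. The gap is that neither of your two routes for the tail of $R_\gamma=\|W_\gamma\|_{L^1}$ actually closes. In the duality/chaining route, the $L^\infty$-entropy of the smoothed class $\{\tilde g\}$ is of order $(\epsilon h_\gamma)^{-d}$ (times the diameter of the effective support, which is random and growing unless $g$ has compact support); it enters the exponent \emph{additively}, so it cannot be ``absorbed into the constants.'' Balancing it against your single-function Bernstein exponent $c\,\neff\min(\tau^2,\tau)$ with a net at scale $\asymp\tau$ gives a nontrivial bound only when $\neff h_\gamma^{d}\tau^{d+2}\gtrsim 1$, and for every $d\ge 1$ there is an intermediate range of $\tau$ where the claimed bound $C'e^{-cN_\gamma h_\gamma^{d}\min(\tau^2,\tau)}$ is meaningful but this net bound is vacuous; closing that range needs a localized (per-cell) argument, not a global covering. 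Moreover, design-noise Bernstein naturally yields an exponent on the $\neff$ scale; this matches the stated $N_\gamma h_\gamma^{d}$ only because Assumptions~\ref{ass:bandwidth-1} and~\ref{ass:design-regularity} force $\neff\asymp N_\gamma$ (with constants depending on $c_0,\alpha$), a point you flag as ``the principal obstacle'' but do not resolve.

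Your fallback grid argument fails at its last step: bounding $\int_{C_k}|W_\gamma(y)-W_\gamma(y_k)|\,\mathrm dy$ via $\frac{1}{N_\gamma}\sum_i|\xi_{\gamma i}|$ times a Lipschitz modulus discards exactly the cancellation you just argued is indispensable. Locally there are $\asymp N_\gamma g\,h_\gamma^{d}$ atoms, each contributing a Lipschitz constant $\asymp h_\gamma^{-d-1}$ to $W_\gamma$, so the within-cell oscillation is $O_\Pr(1)$, its integral over one cell is $O_\Pr(h_\gamma^{d})$, and summing over the $\asymp h_\gamma^{-d}$ occupied cells leaves an $O_\Pr(1)$ remainder that does not vanish; you would have to run the zero-mean concentration on this oscillation term as well, or bound per-cell $L^1$ norms directly by a per-cell Bernstein inequality in the spirit of Lemma~\ref{lem:cell-bernstein}. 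For contrast, the paper's own (very terse) sketch takes a different route for the remainder: it quantizes as you do but attributes the concentration of the summed Lipschitz translation errors to Bernstein applied to the i.i.d.\ cell occupancies of the $Y_{\gamma i}$, which is how the $N_\gamma h_\gamma^{d}$ scale is meant to arise. Your proposal substitutes design-noise chaining for that step but, as written, does not deliver the stated exponential bound.
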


\begin{proof}[Proof sketch.]
Decompose $\mu_\gamma$ into its restrictions on the cells and replace each atom in $B_{\gamma j}$ by a single atom at $c_{\gamma j}$; integrate the Lipschitz translation error of $K_{h_\gamma}$ over each cell. The sum of these errors concentrates with $N_\gamma h_\gamma^{d}$ by Bernstein applied to i.i.d.\ occupancies of the cells (details as in Lemma~\ref{lem:bernstein-ind}).
\end{proof}

\begin{lemma}[Convolution reduction on a grid]\label{lem:conv-grid}
Under assumptions~\ref{ass:kernel-1} and~\ref{ass:bandwidth-1}, for any finite signed measure $\mu$,
\begin{align*}
\|K_h * \mu\|_{L^1}
&\le
\|K\|_{L^1}\sum_{j} |\mu(B_j)|
+
C_K \sum_{j} |r_j|(B_j),\\
\text{with }
r_j &:= \mu\!\restriction_{B_j} - \mu(B_j)\,\delta_{c_j},
\end{align*}
where one may take \(C_K:=\|\nabla K\|_{L^1}\,\frac{\sqrt d}{2}\) (so \(C_K\) does \textbf{not} depend on \(h\)).
In particular, since \(|r_j|(B_j)\le 2\,|\mu|(B_j)\),
\[
\|K_h * \mu\|_{L^1}
\le
\|K\|_{L^1}\sum_{j} |\mu(B_j)|
+ 2 C_K \sum_{j} |\mu|(B_j).
\]
\end{lemma}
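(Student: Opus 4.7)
The plan is to decompose the signed measure cell-by-cell and, within each cell, peel off the atom at the center $c_j$. First I would write $\mu=\sum_j \mu\!\restriction_{B_j}$ on the disjoint partition and, inside each cell, split
$$
\mu\!\restriction_{B_j}\ =\ \mu(B_j)\,\delta_{c_j}\ +\ r_j,
$$
so that by construction $r_j(B_j)=0$ and $\mathrm{supp}(r_j)\subseteq B_j$ (since $c_j\in B_j$ for the half-open grid). Applying $K_h*$, the triangle inequality in $L^1$, and translation invariance gives
$$
\|K_h*\mu\|_{L^1}\ \le\ \sum_j |\mu(B_j)|\,\|K_h\|_{L^1} + \sum_j \|K_h * r_j\|_{L^1},
$$
and the identity $\|K_h\|_{L^1}=\|K\|_{L^1}$ (change of variables $u=\cdot/h$) recovers the leading term of the bound.

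The second step is to exploit the zero-mass property $r_j(B_j)=0$ together with $\nabla K \in L^1$ from Assumption~\ref{ass:kernel-1}. Writing
$$
(K_h*r_j)(y)\ =\ \int [K_h(y-x)-K_h(y-c_j)]\,\dx r_j(x),
$$
(which is legal only because $r_j$ has zero total mass on $B_j$), and applying the fundamental theorem of calculus along the segment from $c_j$ to $x$,
$$
K_h(y-x)-K_h(y-c_j)\ =\ \int_0^1 \nabla K_h\big(y-c_j+s(c_j-x)\big)\cdot(c_j-x)\,\dx s.
$$
Fubini on $(y,s)$ and the scaling identity $\|\nabla K_h\|_{L^1}=h^{-1}\|\nabla K\|_{L^1}$ yield
$$
\int |K_h(y-x)-K_h(y-c_j)|\,\dx y\ \le\ \frac{|x-c_j|}{h}\,\|\nabla K\|_{L^1}.
$$
Since $\mathrm{supp}(r_j)\subseteq B_j$ forces $|x-c_j|\le \sqrt d\, h/2$ uniformly in the support of $|r_j|$, integrating against $|r_j|$ gives
$$
\|K_h*r_j\|_{L^1}\ \le\ \frac{\sqrt d}{2}\,\|\nabla K\|_{L^1}\,|r_j|(B_j)\ =\ C_K\,|r_j|(B_j),
$$
with $C_K$ independent of $h$. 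Summing over $j$ produces the first displayed bound of the lemma, and the simpler form follows from the triangle inequality $|r_j|(B_j)\le |\mu|(B_j)+|\mu(B_j)|\le 2|\mu|(B_j)$.

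The only subtle point is the interplay between the zero-mass property of $r_j$ and the Lipschitz translation bound: it is precisely $r_j(B_j)=0$ that licenses subtracting the constant $K_h(y-c_j)$ inside the integral so that the remainder scales with $|x-c_j|$ rather than with $1$. After that, the uniform cell-diameter bound $\sqrt d\,h/2$ cancels the $h^{-1}$ produced by differentiating $K_h$, leaving a dimensionless constant $C_K=\tfrac{\sqrt d}{2}\|\nabla K\|_{L^1}$ as required. I do not anticipate any real obstacle beyond this bookkeeping, and no appeal to Assumption~\ref{ass:bandwidth-1} is needed in the proof itself since the bound is $h$-uniform; that assumption only becomes relevant when this lemma is invoked downstream.
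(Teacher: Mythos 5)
Your proof is correct and follows essentially the same route as the paper: the identical cellwise decomposition $\mu\!\restriction_{B_j}=\mu(B_j)\,\delta_{c_j}+r_j$, use of the zero-mass property of $r_j$ to subtract $K_h(\cdot-c_j)$, and the $W^{1,1}$ translation bound $\|K_h(\cdot-t)-K_h(\cdot-c_j)\|_{L^1}\le \|\nabla K\|_{L^1}\,\|t-c_j\|/h$ combined with the cell radius $\sqrt d\,h/2$. The only difference is that you derive the translation inequality explicitly via the fundamental theorem of calculus and Fubini, whereas the paper simply cites it; your closing remark that Assumption~\ref{ass:bandwidth-1} is not actually used in the argument is also accurate.
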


\begin{proof}
Write $\mu=\sum_j \nu_j$ with $\nu_j=\mu\!\restriction_{B_j}$ and decompose
$\nu_j=\mu(B_j)\,\delta_{c_j}+r_j$ with $r_j(B_j)=0$.
Then
$$
K_h * \mu = \sum_j \mu(B_j) K_h(\cdot - c_j) + \sum_j K_h * r_j.
$$
Taking $L^1$ norms gives the first term as $\|K\|_{L^1}\sum_j |\mu(B_j)|$.
For the remainder, by Minkowski and the translation inequality valid for $K\in W^{1,1}$,
\begin{align*}
\|K_h * r_j\|_{L^1}
&\le
\int_{B_j} \|K_h(\cdot - t) - K_h(\cdot - c_j)\|_{L^1}\, |\dx r_j|(t) \\
&\le
\left(\sup_{t\in B_j}\|K_h(\cdot - t) - K_h(\cdot - c_j)\|_{L^1}\right)\,|r_j|(B_j).
\end{align*}
Now
\(
\|K_h(\cdot - t) - K_h(\cdot - c_j)\|_{L^1}
\le \|\nabla K\|_{L^1}\|t-c_j\|/h
\le \|\nabla K\|_{L^1}(\sqrt d/2)
\)
because $\|t-c_j\|\le (\sqrt d/2)h$ for $t\in B_j$.
Summing over $j$ yields the claim.
\end{proof}

\begin{lemma}[Per-cell Bernstein/Serfling bound]\label{lem:cell-bernstein}
Let $A_{\gamma ij}:=\1\{Y_{\gamma i}\in B_{\gamma j}\}$.
Conditional on $\{Y,Z\}$,
\begin{align*}
\mu_\gamma(B_{\gamma j})=\frac{1}{N_\gamma}\sum_{i\in\fp}\xi_{\gamma i}A_{\gamma ij},
&&
\E\!\left[\mu_\gamma(B_{\gamma j}) \mid \{Y,Z\}\right]=0,
\end{align*}
and
$$
\Var\left(\mu_\gamma(B_{\gamma j})\mid \{Y,Z\}\right)
=
\frac{1}{N_\gamma^2}\sum_{i\in\fp}\frac{1-\pi_{\gamma i}}{\pi_{\gamma i}}\,A_{\gamma ij}
\le \frac{1}{\neff}.
$$
Moreover, if $\max_i \pi_{\gamma i}^{-1}\le c_0 / \alpha_\gamma$, then for all $t>0$,
\[
\Pr\left(|\mu_\gamma(B_{\gamma j})|>t \mid \{Y,Z\}\right)
\ \le\ 
2\exp\left\{-\frac{t^2}{2\left(v_\gamma + b_\gamma t/3\right\}}\right),
\]
with $v_\gamma\le \neff^{-1}$ and $b_\gamma\le c_0/n_\gamma$.
Under sampling without replacement (rejective), replace $v_\gamma$ by $(1-\alpha_\gamma)\neff^{-1}$.
\end{lemma}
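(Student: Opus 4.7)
The plan is to dispatch the three assertions in the stated order, since each rests on a short conditional-moment calculation for the Poisson--PPS indicators plus one direct invocation of a Bernstein or Serfling bound from Section~\ref{sec:appendix-lemmas-consistency-kde}.

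First I would verify the conditional mean and variance. Conditional on $\{Y,Z\}$ the cell indicators $A_{\gamma ij}$ are constants, and under Poisson--PPS the $\delta_{\gamma i}$ are independent Bernoulli$(\pi_{\gamma i})$, so $\E[\xi_{\gamma i}\mid Z]=\pi_{\gamma i}^{-1}\pi_{\gamma i}-1=0$ and $\Var(\xi_{\gamma i}\mid Z)=(1-\pi_{\gamma i})/\pi_{\gamma i}$. Linearity and $A_{\gamma ij}^2=A_{\gamma ij}$ then give the stated identities immediately. For the bound $\Var(\mu_\gamma(B_{\gamma j})\mid\{Y,Z\})\le 1/\neff$, I would drop the factor $(1-\pi_{\gamma i})\le 1$ and extend the $A_{\gamma ij}$-weighted sum to a sum over all of $\fp$; the right-hand side is then exactly $N_\gamma^{-2}\sum_{i\in\fp}\pi_{\gamma i}^{-1}=1/\neff$ by definition.

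Second, for the tail bound under Poisson--PPS I would write $\mu_\gamma(B_{\gamma j})=\sum_{i\in\fp}X_i$ with $X_i:=\xi_{\gamma i}A_{\gamma ij}/N_\gamma$, which conditional on $\{Y,Z\}$ are independent, centred, uniformly bounded by $b_\gamma:=N_\gamma^{-1}\max_i\pi_{\gamma i}^{-1}\le c_0/n_\gamma$ (via Assumption~\ref{ass:design-regularity}), and satisfy $\sum_i\Var(X_i\mid\{Y,Z\})\le 1/\neff=:v_\gamma$. Plugging $(v_\gamma,b_\gamma)$ into Lemma~\ref{lem:bernstein-ind} and symmetrising produces the claimed two-sided Bernstein inequality.

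The main obstacle lies in the without-replacement statement. For SRS--WOR the values $\{\xi_{\gamma i}A_{\gamma ij}/N_\gamma\}$ can be viewed as drawn without replacement from a finite population, and Lemma~\ref{lem:bernstein-wor} applies verbatim to yield the finite-population correction $(N_\gamma-n_\gamma)/(N_\gamma-1)=1-\alpha_\gamma+o(1)$. For the more general rejective Poisson--PPS-WOR design the $\delta_{\gamma i}$ are no longer independent but only negatively associated; a clean route is either to invoke a Bernstein-type inequality under negative association or to decompose $\mu_\gamma(B_{\gamma j})$ along the Hájek martingale of successive draws and apply Freedman's inequality as in the proof of Lemma~\ref{lem:bernstein-wor}. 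The Horvitz--Thompson variance formula for rejective samples then supplies the same $(1-\alpha_\gamma)$ factor in the variance proxy, which delivers the stated replacement of $v_\gamma$.
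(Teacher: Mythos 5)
Your proposal is correct and follows essentially the same route as the paper: conditional independence of the $\delta_{\gamma i}$ given $\{Y,Z\}$ yields the mean and variance identities, the bound $v_\gamma\le\neff^{-1}$ and $b_\gamma\le c_0/n_\gamma$ follow by dropping $(1-\pi_{\gamma i})\le 1$, $A_{\gamma ij}\le 1$ and invoking Assumption~\ref{ass:design-regularity}, and the tail bound is exactly Lemma~\ref{lem:bernstein-ind} (with Lemma~\ref{lem:bernstein-wor} supplying the $(1-\alpha_\gamma)$ correction without replacement). Your added discussion of the general rejective case (negative association or the H\'ajek martingale plus Freedman) is in fact more careful than the paper's one-line citation of Lemma~\ref{lem:bernstein-wor}, which strictly speaking covers SRS--WOR.
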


\begin{proof}
The variance identity follows immediately from independence (Poisson--PPS) of $\delta_{\gamma i}$ given $\{Y,Z\}$; the tail bound is Bernstein’s inequality with the stated $v_\gamma,b_\gamma$ (and Lemma~\ref{lem:bernstein-wor} for WOR).
\end{proof}

\begin{proof}[Proof of Proposition~\ref{prop:design-ld}]
By Lemma~\ref{lem:cell-smooth},
\(
\|T_\gamma-\bar f_{\gamma,h}\|_1 \le C_K \sum_{j=1}^{M_\gamma} |\mu_\gamma(B_{\gamma j})| + R_\gamma.
\)
Fix $\tau\in(0,1]$ and set $u:=\tau/(2C_K)$; then
\[
\Pr\left(C_K\sum_{j} |\mu_\gamma(B_{\gamma j})|>\tfrac{\tau}{2} \MID \{Y,Z\}\right)
\le
\Pr\left(\exists j\colon |\mu_\gamma(B_{\gamma j})|>\tfrac{u}{M_\gamma} \Mid \{Y,Z\}\right).
\]
By Lemma~\ref{lem:cell-bernstein} with $t=u/M_\gamma$ and a union bound over $M_\gamma\asymp h_\gamma^{-d}$ cells,
\begin{align*}
\Pr\left(\,\exists j:\ |\mu_\gamma(B_{\gamma j})|>\tfrac{u}{M_\gamma} \Mid \{Y,Z\}\right)
&\le 
2 M_\gamma\exp\left\{-\frac{t^2}{2(v_\gamma + b_\gamma t/3)}\right\} \\
&\le
C \exp\left\{-c \frac{\neff}{M_\gamma}\,\min\{u^2,u\}\right\},
\end{align*}
since $v_\gamma\le n_{\mathrm{eff},\gamma}^{-1}$ and $b_\gamma\le c_0/n_\gamma$, and for large $\gamma$ the $b_\gamma t$ term is dominated by $v_\gamma$ when $t\lesssim M_\gamma^{-1}$.
Because $M_\gamma\asymp h_\gamma^{-d}$, we obtain
\[
\Pr\left(C_K\sum_{j} |\mu_\gamma(B_{\gamma j})|>\tfrac{\tau}{2} \MID \{Y,Z\}\right)
\le
C \exp\left\{-c\,\neff\, h_\gamma^{d}\,\min(\tau^2,\tau)\right\}.
\]
Finally, add the bound for the remainder $\Pr(R_\gamma>\tau/2)\le C e^{-cN_\gamma h_\gamma^d}$ from Lemma~\ref{lem:cell-smooth}.
The WOR version follows by replacing $v_\gamma$ by $(1-\alpha_\gamma)\neff^{-1}$ in Lemma~\ref{lem:cell-bernstein}.
\end{proof}

\subsection{KDE large-deviation bounds}

\begin{lemma}[i.i.d.\ KDE $L^1$ tail]\label{lem:iid-L1}
Under assumptions~\ref{ass:kernel-1} and~\ref{ass:superpopulation}, there exist constants $C,c>0$ (depending only on $K,d$) such that for all $\tau\in(0,1]$,
\begin{equation}\label{eqn:iid-ld}
\Pr\left(\left\|\bar f_{\gamma,h}-g*K_{h_\gamma}\right\|_{L^1}>\tau\right)
\le
C \exp\left\{-c\,N_\gamma h_\gamma^{\,d}\,\min(\tau^2,\tau)\right\}.
\end{equation}
\end{lemma}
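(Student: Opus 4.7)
My plan is to parallel the proof of Proposition~\ref{prop:design-ld} almost verbatim, substituting the i.i.d.\ sampling noise $\1\{Y_{\gamma i}\in\cdot\}-g(\cdot)$ for the design noise $\delta_{\gamma i}/\pi_{\gamma i}-1$. Write $\bar f_{\gamma,h}-g*K_{h_\gamma}=K_{h_\gamma}*\nu_N$ with $\nu_N:=N_\gamma^{-1}\sum_{i\in\fp}\delta_{Y_{\gamma i}} - g$ the centered empirical signed measure, and partition $\R^d$ into half-open cubes $\{B_{\gamma j}\}$ of side $h_\gamma$ centered at $c_{\gamma j}$. Lemma~\ref{lem:conv-grid} applied to $\mu=\nu_N$ then reduces $L^1$ control to cell-mass deviations plus a within-cell smoothing remainder:
$$
\|\bar f_{\gamma,h}-g*K_{h_\gamma}\|_{L^1} \le \|K\|_{L^1}\sum_j|\nu_N(B_{\gamma j})| + 2C_K\sum_j|r_{\gamma j}|(B_{\gamma j}).
$$

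For the main sum, $N_\gamma\nu_N(B_j)=\sum_i(\1_{B_j}(Y_{\gamma i})-g(B_j))$ is a centered i.i.d.\ sum with summands bounded by $1$ and variance at most $g(B_j)$, so Bernstein (Lemma~\ref{lem:bernstein-ind}) yields
$$
\Pr(|\nu_N(B_j)|>s) \le 2\exp\left(-\frac{N_\gamma s^2/2}{g(B_j)+s/3}\right).
$$
Choosing the per-cell threshold $s\asymp \tau h_\gamma^d$ and union-bounding over the $M_\gamma\asymp h_\gamma^{-d}$ cells intersecting a truncation region, the exponent becomes of order $-N_\gamma h_\gamma^d \tau^2/(g(B_j)/h_\gamma^d+\tau)\asymp -cN_\gamma h_\gamma^d\min(\tau^2,\tau)$, with the $\log M_\gamma$ factor from the union bound absorbed under Assumption~\ref{ass:bandwidth-1}. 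The remainder $\sum_j|r_{\gamma j}|(B_{\gamma j})$ combines the deterministic Lipschitz translation error of $K_{h_\gamma}$ with concentration of the cell occupancies $N_\gamma P_{N_\gamma}(B_j)$ around $N_\gamma g(B_j)$, yielding an identical exponential tail by the same argument used for $R_\gamma$ in Lemma~\ref{lem:cell-smooth}.

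I expect the main obstacle to be handling the fact that Assumption~\ref{ass:superpopulation} gives only $g\in L^1$---not $\|g\|_\infty<\infty$ nor compact support---so that (a) the Bernstein denominator $g(B_j)+s/3$ may be dominated by $g(B_j)$ for heavy cells, and (b) the total number of cells is a priori infinite. I would address both via two complementary devices. First, truncate to a compact $A_R$ chosen so $\int_{A_R^c}g\le\tau/4$; cells disjoint from $A_R$ contribute at most $g(B_j)$ deterministically (either they are empty, or a Bernoulli tail on the total number of points falling into $A_R^c$ covers the small occupied set), and these sum to the negligible tail mass. Second, apply a peeling split of the remaining cells into ``heavy'' ($g(B_j)\gtrsim h_\gamma^d$) and ``light'' ($g(B_j)\lesssim h_\gamma^d$) regimes: since $\sum_jg(B_j)=1$ the heavy cells are $O(h_\gamma^{-d})$ in number and each admits an individual Bernstein bound at its own variance scale, while light cells uniformly satisfy the bound $g(B_j)\lesssim h_\gamma^d$ needed to extract the advertised $N_\gamma h_\gamma^d\min(\tau^2,\tau)$ rate after union-bounding.
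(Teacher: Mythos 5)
Your proposal follows exactly the route the paper itself prescribes for this lemma: the paper's own argument is a sketch that partitions $\R^d$ into cells of side $h_\gamma$, applies Bernstein per cell, union-bounds over $M_\gamma\asymp h_\gamma^{-d}$ cells, and explicitly states that ``a full proof parallels the one of Proposition~\ref{prop:design-ld}, with independence replacing design weighting,'' which is precisely your plan via Lemma~\ref{lem:conv-grid} and Lemma~\ref{lem:bernstein-ind}. If anything you are more explicit than the paper about the consequences of assuming only $g\in L^1$ (truncation to a compact region and the heavy/light cell split); the one point to pin down is that for heavy cells the per-cell thresholds must be enlarged proportionally to $g(B_{\gamma j})$ (so that they still sum to $O(\tau)$) rather than kept at $s\asymp\tau h_\gamma^d$, since otherwise the Bernstein exponent for a cell with $g(B_{\gamma j})\gg h_\gamma^d$ is of order $N_\gamma\tau^2 h_\gamma^{2d}/g(B_{\gamma j})$, which falls short of the advertised $N_\gamma h_\gamma^d\min(\tau^2,\tau)$ rate.
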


\begin{proof}[Proof sketch.]
Partition $\R^d$ into cubes $\{B_{\gamma j}\}_{j=1}^{M_\gamma}$ of side $h_\gamma$ with centers $z_{\gamma j}$, where $M_\gamma\asymp h_\gamma^{-d}$.
For each cell center,
\[
S_{\gamma j}:=\frac{1}{N_\gamma}\sum_{i=1}^{N_\gamma}\left\{K_{h_\gamma}(z_{\gamma j}-Y_{\gamma i}) - \E\left[ K_{h_\gamma}(z_{\gamma j}-Y) \right]\right\}
\]
is a sum of independent centered bounded variables with variance $\lesssim (N_\gamma h_\gamma^d)^{-1}$; Bernstein yields
$\Pr(|S_{\gamma j}|>u) \le 2\exp\{-c\,N_\gamma h_\gamma^d\min(u^2,u)\}$.
A union bound over $M_\gamma\asymp h_\gamma^{-d}$ centers gives
\(
\max_j |S_{\gamma j}| \le u
\)
with probability at least $1 - C \exp\{-c\,N_\gamma h_\gamma^d \min(u^2,u)\}$.
Using the Lipschitz–translation inequality for $K_{h_\gamma}$, which is valid since $K\in W^{1,1}$,
\[
\int_{B_{\gamma j}}\left|(\bar f_{\gamma,h} - g * K_{h_\gamma})(y) - S_{\gamma j}\right| \dx y
\le C_K h_\gamma^d.
\]
Summing over $j$ yields
\(
\|\bar f_{\gamma,h}- g * K_{h_\gamma}\|_{L^1}
\le
M_\gamma h_\gamma^d \max_j |S_{\gamma j}| + C_K M_\gamma h_\gamma^d
= \max_j |S_{\gamma j}| + C_K'.
\)
Choosing $u\simeq \tau/2$ and noticing that $C_K'$ can be absorbed into the $\min(\tau^2,\tau)$ regime for $\tau\in(0,1]$ gives the desired bound~\eqref{eqn:iid-ld}.
A full proof parallels the one of Proposition~\ref{prop:design-ld}, with independence replacing design weighting.
\end{proof}

\begin{lemma}[Approximate identity]\label{lem:approx-id}
If $K\in L^1(\R^d)$ with $\int K=1$, then for every $g\in L^1(\R^d)$,
\(
\|g*K_{h_\gamma}-g\|_{L^1}\to 0
\)
as $h_\gamma\downarrow 0$.
\end{lemma}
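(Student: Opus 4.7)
The statement is the classical approximate-identity convergence in $L^1(\R^d)$, so the plan is to reduce the claim to two well-known ingredients: Minkowski's integral inequality (Young's convolution inequality for $L^1$) and continuity of translation in $L^1$. Throughout, write $\tau_t g(x) := g(x - t)$ for the translation operator.

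The first step is to rewrite the difference $g * K_{h_\gamma} - g$ in a form that isolates translations of $g$. Since $\int K = 1$, we have $g(x) = \int g(x)\,K_{h_\gamma}(y)\,\dx y$, hence
\begin{equation*}
(g * K_{h_\gamma})(x) - g(x)
= \int_{\R^d}\bigl(g(x - y) - g(x)\bigr)\,K_{h_\gamma}(y)\,\dx y.
\end{equation*}
Taking $L^1$ norms in $x$ and applying Fubini--Tonelli together with Minkowski's integral inequality yields
\begin{equation*}
\|g * K_{h_\gamma} - g\|_{L^1}
\le \int_{\R^d} |K_{h_\gamma}(y)|\, \|\tau_y g - g\|_{L^1}\,\dx y.
\end{equation*}
Now perform the change of variables $y = h_\gamma z$ (with Jacobian $h_\gamma^{d}$, which exactly cancels the $h_\gamma^{-d}$ in $K_{h_\gamma}$), giving
\begin{equation*}
\|g * K_{h_\gamma} - g\|_{L^1}
\le \int_{\R^d} |K(z)|\, \|\tau_{h_\gamma z} g - g\|_{L^1}\,\dx z.
\end{equation*}

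The second step is to pass to the limit under the integral. Continuity of translation in $L^1$ (a standard consequence of the density of $C_c(\R^d)$ in $L^1(\R^d)$) gives, for each fixed $z \in \R^d$, $\|\tau_{h_\gamma z} g - g\|_{L^1} \to 0$ as $h_\gamma \downarrow 0$. Moreover, the integrand is dominated by the fixed integrable envelope $2\|g\|_{L^1}\,|K(z)|$, which belongs to $L^1(\R^d)$ since $K \in L^1(\R^d)$. The dominated convergence theorem then delivers $\|g * K_{h_\gamma} - g\|_{L^1} \to 0$, which is the claim.

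No step presents any real obstacle: Minkowski and Young for convolution with $L^1$ kernels are routine, and the only nontrivial input is the $L^1$-continuity of translations, which is standard and independent of the survey-sampling context. If one prefers to avoid citing that fact, the identical conclusion follows from the classical $\varepsilon/3$ argument: approximate $g$ in $L^1$ by $\tilde g \in C_c(\R^d)$, use uniform continuity of $\tilde g$ to bound $\|\tilde g * K_{h_\gamma} - \tilde g\|_{L^1}$ directly, and control the residual terms by Young's inequality $\|(g - \tilde g) * K_{h_\gamma}\|_{L^1} \le \|g - \tilde g\|_{L^1}\|K\|_{L^1}$.
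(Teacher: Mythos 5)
Your proof is correct. The paper states this lemma without proof, treating it as the classical approximate-identity fact, and your argument is exactly the standard one: rewrite the difference using $\int K = 1$, bound via Minkowski's integral inequality, rescale $y = h_\gamma z$, and conclude by $L^1$-continuity of translation together with dominated convergence (with envelope $2\|g\|_{L^1}|K(z)|$). Both this route and the $\varepsilon/3$ density argument you mention are standard and fully adequate; nothing further is needed.
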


\subsection{Consistency of the HT-adjusted KDE}

\begin{proof}[Proof of Theorem~\ref{thm:ht-kde-consistency}]

The proof uses several technical lemmas listed in Appendix~\ref{sec:appendix-lemmas-consistency-kde}.
With the definition of $\bar f_{\gamma,h}(y)$ from Proposition~\ref{prop:design-ld} and using Lemma~\ref{lem:threeWay} we get
\begin{equation}\label{eq:three-way}
\|\hat f_{\gamma}-g\|_1
\le
\underbrace{|S_\gamma^{-1}-1|}_{A_\gamma}
+
\underbrace{\|T_\gamma-\bar f_{\gamma,h}\|_1}_{B_\gamma}
+
\underbrace{\|\bar f_{\gamma,h}-g\|_1}_{C_\gamma}.
\end{equation}

\paragraph*{HT normalizer concentration.}

Let \(X_{\gamma i}:=N_\gamma^{-1}(\delta_{\gamma i}/\pi_{\gamma i}-1)\).
Conditional on \(Z\), the \(X_{\gamma i}\) are independent, mean zero, and \(|X_{\gamma i}|\le c_0/n_\gamma\) by Assumption~\ref{ass:design-regularity}.
Moreover
\[
\Var(S_\gamma\mid Z)=\frac{1}{\tneff}
\le
\frac{1}{\neff}.
\]
Bernstein bounds from Lemmas~\ref{lem:bernstein-ind}--\ref{lem:bernstein-wor} yield constants \(c,c'>0\) with
\[
\Pr\left(|S_\gamma-1| > t\ \mid\ Z\right)
\le
2\exp\left\{-\frac{c\neff t^2}{1+c' t}\right\}.
\]
For \(|S_\gamma-1|\le 1/2\), \(A_\gamma \le 2|S_\gamma-1|\), hence \(A_\gamma=o_\Pr(1)\) with exponential tails.

\paragraph*{Design noise in the numerator.}
Proposition~\ref{prop:design-ld} shows there exist constants $c,C>0$ such that
$$
\Pr\left(B_\gamma > \tau \mid \{Y,Z\}\right)
\le
C\exp\left\{-c\neff h_\gamma^{d}\,\min(\tau^2,\tau)\right\}
+
C\exp\left\{-c N_\gamma h_\gamma^{d}\right\}.
$$

\paragraph*{Smoothing noise and bias.}
Decompose
$$
C_\gamma
\le
\left\|\bar f_{\gamma,h}- g * K_{h_\gamma}\right\|_{L^1}
+
\left\|g * K_{h_\gamma} - g\right\|_{L^1}.
$$
Then, combining Lemmas~\ref{lem:iid-L1} and~\ref{lem:approx-id} shows that $C_\gamma \to 0$ in probability, with the tail
$$
\Pr\left(\left\|\bar f_{\gamma,h}-g*K_{h_\gamma}\right\|_{L^1}>\tau\right)
\le
C\exp\left\{-c\,N_\gamma\, h_\gamma^{d}\,\min(\tau^2,\tau)\right\}.
$$
for the stochastic part.

\paragraph*{Conclusion}
Plugging the three steps above into the three-way decomposition \eqref{eq:three-way} we obtain
$$
\Pr\left(\|\hat f_{\gamma}-g\|_{L^1}>\tau\right)
\le
C \exp\left\{-c\,\neff\,h_\gamma^{d}\,\min(\tau^2,\tau)\right\}
+
C \exp\left\{-c\,N_\gamma h_\gamma^{d}\right\}
+
o(1),
$$
and therefore $\|\hat f_{\gamma}-g\|_{L^1}\xrightarrow{\,p\,}0$ as $\gamma\to\infty$ whenever $\neff\, h_\gamma^{d}\to\infty$.
\end{proof}

\begin{remark}[On the $N_\gamma h_\gamma^d$ growth]
Note that
\[
\neff
=\frac{N_\gamma^2}{\sum_{i\in\fp}\pi_{\gamma i}^{-1}}
\le
\frac{N_\gamma^2}{N_\gamma^2/\sum_{i\in\fp} \pi_{\gamma i}}
=
\sum_{i\in\fp} \pi_{\gamma i}
=:
n_\gamma
\le
N_\gamma,
\]
so $N_\gamma h_\gamma^d \ge \neff\, h_\gamma^d$ for all $\gamma$.
Therefore, the condition $\neff\,h_\gamma^d\to\infty$ implies $N_\gamma h_\gamma^d\to\infty$, without any additional assumptions on $\alpha_\gamma$.
\end{remark}

\subsection{Consistency of the MHDE}\label{sec:appendix-proof-consistency-mhde}

\begin{lemma}[Uniform Hellinger control]\label{lem:uniform}
For all $\gamma$,
$$
\sup_{\theta\in\Theta}\left|\Gamma_\gamma(\theta)-\Gamma(\theta)\right|
\le
\left\|\sqrt{\hat f_{\gamma}}-\sqrt{g}\right\|_{L^2}.
$$
\end{lemma}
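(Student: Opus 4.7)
The plan is a one-line Cauchy--Schwarz argument. Fix $\theta \in \Theta$ and write
\[
\Gamma_\gamma(\theta) - \Gamma(\theta)
= \int \sqrt{f_\theta(y)}\,\Bigl(\sqrt{\hat f_\gamma(y)} - \sqrt{g(y)}\Bigr)\dx y,
\]
using the definitions $\Gamma_\gamma(\theta)=\int\sqrt{\hat f_\gamma f_\theta}$ and $\Gamma(\theta)=\int\sqrt{g f_\theta}$. Apply Cauchy--Schwarz in $L^2$ to the product of $\sqrt{f_\theta}$ and $\sqrt{\hat f_\gamma}-\sqrt g$, and then use that $f_\theta$ is a density so $\int f_\theta=1$. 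This gives the bound $\|\sqrt{\hat f_\gamma}-\sqrt g\|_{L^2}$ uniformly in $\theta$, and taking the supremum over $\Theta$ yields the claim.

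There is essentially no obstacle: the only detail worth flagging is measurability/integrability of $\sqrt{f_\theta}\,(\sqrt{\hat f_\gamma}-\sqrt g)$, which is immediate since $f_\theta,\hat f_\gamma,g\in L^1$ are nonnegative, so all square roots lie in $L^2_{\mathrm{loc}}$, and Cauchy--Schwarz requires only that $\sqrt{f_\theta}\in L^2$ (it is, with norm $1$) and $\sqrt{\hat f_\gamma}-\sqrt g\in L^2$ (it is, by the elementary inequality $(\sqrt a-\sqrt b)^2\le|a-b|$ and $\hat f_\gamma,g\in L^1$). The bound is therefore universal in $\theta$ and does not require compactness of $\Theta$ or any continuity of $\theta\mapsto f_\theta$, which is exactly what later consistency arguments (Proposition~\ref{prop:mhde-tail} and Theorem~\ref{thm:mhde-consistency}) will exploit.
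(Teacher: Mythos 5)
Your proof is correct and is essentially identical to the paper's: both write $\Gamma_\gamma(\theta)-\Gamma(\theta)=\int\sqrt{f_\theta}\bigl(\sqrt{\hat f_\gamma}-\sqrt g\bigr)$, apply Cauchy--Schwarz with $\|\sqrt{f_\theta}\|_{L^2}=1$, and take the supremum over $\theta$. The extra remarks on integrability are fine but not needed beyond what the paper already assumes.
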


\begin{proof}
For any $\theta$, by Cauchy-Schwarz,
$$
|\Gamma_\gamma(\theta)-\Gamma(\theta)|
=\left|\int \sqrt{f_\theta}\,\left(\sqrt{\hat f_{\gamma}}-\sqrt g\right)\right|
\le
\left\|\sqrt{f_\theta}\right\|_2\,\left\|\sqrt{\hat f_{\gamma}}-\sqrt g\right\|_2
= \left\|\sqrt{\hat f_{\gamma}}-\sqrt g\right\|_2,
$$
since $\|\sqrt{f_\theta}\|_2=(\int f_\theta)^{1/2}=1$.
Taking the supremum over $\theta$ gives the claim.
\end{proof}

\begin{lemma}[Hellinger vs.\ $L_1$]\label{lem:HvsL1}
For densities $p,q$ on $\R^d$,
$$
\|\sqrt p-\sqrt q\|_{L^2}^2 \ \le\ \|p-q\|_{L^1}.
$$
\end{lemma}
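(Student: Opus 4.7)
The plan is to prove the inequality pointwise and then integrate. The key algebraic observation is the factorization
\[
p(y)-q(y)=\left(\sqrt{p(y)}-\sqrt{q(y)}\right)\left(\sqrt{p(y)}+\sqrt{q(y)}\right),
\]
valid at every $y$ since $p,q\ge 0$. Taking absolute values gives $|p-q|=|\sqrt p-\sqrt q|\cdot(\sqrt p+\sqrt q)$.

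Next I would compare this to the squared difference. Because $\sqrt p,\sqrt q\ge 0$, the reverse triangle inequality yields $|\sqrt p-\sqrt q|\le \sqrt p+\sqrt q$, so pointwise
\[
\left(\sqrt{p(y)}-\sqrt{q(y)}\right)^2
=|\sqrt p-\sqrt q|\cdot|\sqrt p-\sqrt q|
\le |\sqrt p-\sqrt q|\cdot(\sqrt p+\sqrt q)
=|p(y)-q(y)|.
\]
Integrating both sides over $\R^d$ (both sides are nonnegative and measurable) gives
\[
\|\sqrt p-\sqrt q\|_{L^2}^2=\int \left(\sqrt p-\sqrt q\right)^2\dx y\ \le\ \int |p-q|\dx y=\|p-q\|_{L^1},
\]
which is the claim.

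There is essentially no obstacle: the result is the standard one-sided comparison between squared Hellinger distance and total variation (a Le Cam-type inequality). The only thing worth flagging is that the reverse inequality, $\|p-q\|_{L^1}\le C\|\sqrt p-\sqrt q\|_{L^2}$, does \emph{not} come from the same elementary argument and requires a Cauchy--Schwarz step on $(\sqrt p+\sqrt q)$; but that direction is not needed here, since Lemma~\ref{lem:uniform} already controls $\|\sqrt{\hat f_\gamma}-\sqrt g\|_{L^2}$ by the $L^1$ bound of Theorem~\ref{thm:ht-kde-consistency} via the present lemma.
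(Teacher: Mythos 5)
Your proof is correct and takes essentially the same route as the paper: a pointwise comparison $(\sqrt{p}-\sqrt{q})^2 \le |p-q|$, obtained from the factorization $p-q=(\sqrt p-\sqrt q)(\sqrt p+\sqrt q)$ together with $|\sqrt p-\sqrt q|\le \sqrt p+\sqrt q$, followed by integration. If anything, your write-up states the pointwise step more cleanly than the paper's one-line version.
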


\begin{proof}
Pointwise for $a,b\ge 0$ and w.l.o.g.\ $a\ge b$,
$(\sqrt a-\sqrt b)^2=(a-b)/(\sqrt a+\sqrt b)\le a-b$.
Integrate with $a=p(y)$ and $b=q(y)$.
\end{proof}

\begin{proof}[Proof of Proposition~\ref{prop:mhde-tail}]
By Lemmas~\ref{lem:uniform} and~\ref{lem:HvsL1},
$$
\sup_\theta|\Gamma_\gamma(\theta)-\Gamma(\theta)|
\le
\left\|\sqrt{\hat f_{\gamma}}-\sqrt g\right\|_2
\le
\left\|\hat f_{\gamma}-g\right\|_1^{1/2}.
$$
Hence, for $t\in(0,1]$,
$$
\Pr\left(\sup_\theta|\Gamma_\gamma(\theta)-\Gamma(\theta)|>t\right)
\le
\Pr\left(\|\hat f_{\gamma}-g\|_1>t^2\right).
$$

Apply the $L_1$ large-deviation bound of Theorem~\ref{thm:ht-kde-consistency} with $\tau=t^2$, which yields $\exp\{-c\,\neff\,h_\gamma^d\min(t^4,t^2)\}$ for the design term and $\exp\{-c\,N_\gamma h_\gamma^d\}$ for the i.i.d.\ smoothing term.
The WOR factor $(1-\alpha_\gamma)$ enters as in Theorem~\ref{thm:ht-kde-consistency}.
\end{proof}

\section{Proof of the CLT}\label{sec:proof-clt}

We first define the notation used throughout this proof.
For any distribution $H$ with density $h$ we write

\begin{align*}
\nabla_\theta \Gamma_h(\theta)\MID_{\theta=\theta_0}
=\int \phi_g(y) \left(h(y) - g(y) \right) R_h(y)\dx y,
&&
R_h(y):=\frac{2\sqrt{g(y)}}{\sqrt{h(y)}+\sqrt{g(y)}}\in[0,2].
\end{align*}
Since $\theta_0$ maximizes $\Gamma_G(\theta)=\int\sqrt{f_\theta}\sqrt{g}$, we have $\nabla_\theta \Gamma_G(\theta_0)=0$.

Let's further define $\psi_{h_\gamma} := K_{h_\gamma} * \psi_g$.
We continue to use the notation $\widehat f_{\gamma}=S_\gamma^{-1}T_\gamma$ for the HT-adjusted KDE and $\bar f_{\gamma,h}(y)=$ for the unweighted KDE as in Proposition~\ref{prop:design-ld}.

\paragraph*{Algebraic decomposition.}
Add and subtract $\bar f_{\gamma,h}$ and $g*K_{h_\gamma}$, and isolate the normalizer:
\begin{align*}
\nabla_\theta\Gamma_\gamma(\theta_0)
&=\int \phi_g\,(S_\gamma^{-1}T_\gamma-g)\,R_\gamma
=\underbrace{\int \phi_g\,(T_\gamma-\bar f_{\gamma,h})\,R_\gamma}_{\mat A_{\gamma,1}}
+\underbrace{\int \phi_g\,(\bar f_{\gamma,h}-g*K_{h_\gamma})\,R_\gamma}_{\mat A_{\gamma,2}}\\
&\quad+\underbrace{\int \phi_g\,(g*K_{h_\gamma}-g)\,R_\gamma}_{\mat B_\gamma}
+\underbrace{(S_\gamma^{-1}-1)\int \phi_g\,T_\gamma\,R_\gamma}_{\mat N_\gamma^{(S)}}.
\end{align*}
Split $A_{\gamma,1}$ and $A_{\gamma,2}$ into a linear piece and a nonlinear remainder involving $R_\gamma-1$:
$$
\mat A_{\gamma,1}=\underbrace{\int \phi_g\,(T_\gamma-\bar f_{\gamma,h})}_{\mat \Xi_\gamma} 
+\underbrace{\int \phi_g\,(T_\gamma-\bar f_{\gamma,h})\,(R_\gamma-1)}_{\mat R_{\gamma,2}^{(1)}},
$$
$$
\mat A_{\gamma,2}=\underbrace{\int \phi_g\,(\bar f_{\gamma,h}-g*K_{h_\gamma})}_{\mat U_\gamma}
+\underbrace{\int \phi_g\,(\bar f_{\gamma,h}-g*K_{h_\gamma})\,(\mat R_\gamma-1)}_{\mat R_{\gamma,2}^{(2)}}.
$$
Set $\mat R_{\gamma,2}:=\mat R_{\gamma,2}^{(1)}+\mat R_{\gamma,2}^{(2)}$.

\paragraph*{Identify the HT fluctuation.}
By Fubini,
$$
\mat \Xi_\gamma
=\int \phi_g (T_\gamma-\bar f_{\gamma,h})
=\frac{1}{N_\gamma}\sum_{i \in \fp}\left(\frac{\delta_{\gamma i}}{\pi_{\gamma i}}-1\right) \psi_{h_\gamma}(Y_{\gamma i}),
$$
with $\psi_{h_\gamma}=K_{h_\gamma}*\phi_g$.

\paragraph*{Variance limit and CLT for $\mat \Xi_\gamma$.}
Define
$$
\mat V_\gamma:=\tneff \Var \left(\mat \Xi_\gamma \Mid \{Y_{\gamma i}\}\right)
=\tneff \frac{1}{N_\gamma^2}\sum_{i \in \fp}\frac{1-\pi_{\gamma i}}{\pi_{\gamma i}} 
\psi_{h_\gamma}(Y_{\gamma i})\psi_{h_\gamma}(Y_{\gamma i})\tr.
$$
Let
$
w_{\gamma i}:=\dfrac{(1-\pi_{\gamma i})/\pi_{\gamma i}}{\sum_{j \in \fp}(1-\pi_{\gamma j})/\pi_{\gamma j}}
$
so that $\mat V_\gamma=\sum_{i \in \fp} w_{\gamma i} \psi_{h_\gamma}(Y_{\gamma i})\psi_{h_\gamma}(Y_{\gamma i})^\top$ and $\max_i w_{\gamma i}\to 0$ by the ``no dominant unit'' in Assumption~\ref{ass:clt-lindeberg}.
By Lemma~\ref{lem:clt-weighted-lln} (applied to each coordinate) and $\E_g\|\psi_{h_\gamma}(Y)\|<\infty$ (from $\phi_g\in L^2(g)$ and $K_{h_\gamma}\in L^1$),
$$
\mat V_\gamma \xrightarrow{\,\Pr\,} \E_g\left[\psi_{h_\gamma}(Y)\psi_{h_\gamma}(Y)^\top\right]=: \mat \Sigma_{h_\gamma}.
$$
By Lemma~\ref{lem:clt-approx-identity},
$$
\|\psi_{h_\gamma}-\phi_g\|_{L^2(g)}\to 0
\quad\Rightarrow\quad
\mat\Sigma_{h_\gamma}\to \mat\Sigma:=\E_g[\phi_g\phi_g^\top]
$$
(since $\|ab\tr-ac\tr\|_{\mathrm{HS}}\le \|a\|_2 \|b-c\|_2+\|c\|_2 \|a-b\|_2$).
Finally, the triangular-array Lindeberg condition in Assumption~\ref{ass:clt-lindeberg} yields, conditionally on $\{Y_{\gamma i}\}$,
$$
\sqrt{\tneff} \mat \Xi_\gamma \xrightarrow{\;} \mathcal N_p(0,\mat\Sigma),
$$
and unconditioning preserves the limit.

\paragraph*{i.i.d.\ smoothing term $U_\gamma$.}
Write
\begin{align*}
\mat U_\gamma
&= \int \phi_g(y) \left[\frac{1}{N_\gamma}\sum_{i=1}^{N_\gamma}\left\{K_{h_\gamma}(y-Y_{\gamma i})-\E K_{h_\gamma}(y-Y)\right\}\right]\dx y \\
&=\frac{1}{N_\gamma}\sum_{i=1}^{N_\gamma}\left\{\psi^\sharp_{h_\gamma}(Y_{\gamma i})-\E \psi^\sharp_{h_\gamma}(Y)\right\},
\end{align*}
where $\psi^\sharp_{h_\gamma}:=K^\sim_{h_\gamma}*\phi_g$ and $K^\sim(t):=K(-t)$.
Hence $\E[\mat U_\gamma]=0$ and
\begin{align*}
\Var(\mat U_\gamma)
&=\frac{1}{N_\gamma} \E_g\left[\psi^\sharp_{h_\gamma}(Y)^2\right]
=\frac{1}{N_\gamma} \|K_{h_\gamma}*\phi_g\|_{L^2(g)}^2 \\
& \le\frac{\|\phi_g\|_{L^2(g)}^2}{N_\gamma}\int \frac{(K_{h_\gamma}^2*g)(y)}{g(y)}\dx y,
\end{align*}
by Lemma~\ref{lem:clt-conv-weight}.
Under the localized risk Assumption~\ref{ass:clt-local-risk},
$
\int (K_{h_\gamma}^2*g)/g \le C_R h_\gamma^{-d} + \tau_R(h_\gamma)
$
for any fixed $R$, uniformly for small $h_\gamma$.
Choosing $R=R_\gamma\uparrow\infty$ with $\tau_{R_\gamma}(h_\gamma)\to 0$ yields
$$
\Var(\mat U_\gamma)\lesssim\frac{1}{N_\gamma h_\gamma^d}\quad\Rightarrow\quad
\sqrt{\tneff} \mat U_\gamma\xrightarrow{\;\Pr\;}0
$$
by Assumption~\ref{ass:clt-bandwidth}.

\paragraph*{Bias term $\mat B_\gamma$.}
By Cauchy--Schwarz and $R_\gamma\in[0,2]$,
$$
|\mat B_\gamma|
\le 2 \|\phi_g\|_{L^2(g)} \left\|\frac{g*K_{h_\gamma}-g}{\sqrt g}\right\|_{L^2}.
$$
Under either kernel route in Assumption~\ref{ass:clt-kernel} and the localized bias bound in Assumption~\ref{ass:clt-local-risk},
$
\|(g*K_{h_\gamma}-g)/\sqrt g\|_{2}\lesssim h_\gamma^\beta,
$
hence
$
\sqrt{\tneff} \mat B_\gamma\to 0
$
by Assumption~\ref{ass:clt-bandwidth}.

\paragraph*{Nonlinear remainder $R_{\gamma,2}$.}
Apply Lemma~\ref{lem:clt-rem-hellinger-chisq} with $u=T_\gamma-\bar f_{\gamma,h}$ and $u=\bar f_{\gamma,h}-g*K_{h_\gamma}$, and $\varphi=\phi_g$:
$$
|\mat R_{\gamma,2}|
 \le (M H(\widehat f_{HT,\gamma},g)+\sqrt{\epsilon(M)})\left(
\left\|\frac{T_\gamma-\bar f_{\gamma,h}}{\sqrt g}\right\|_{2}
+\left\|\frac{\bar f_{\gamma,h}-g*K_{h_\gamma}}{\sqrt g}\right\|_{2}
\right).
$$
Under the localized risk bounds,
\begin{align*}
\E\left\|\frac{T_\gamma-\bar f_{\gamma,h}}{\sqrt g}\right\|_{2}^2  & \lesssim \frac{1}{\tneff h_\gamma^d}\quad
\text{(Poisson--PPS; WOR has FPC $(1-\alpha_\gamma)$),}\\
\E\left\|\frac{\bar f_{\gamma,h}-g*K_{h_\gamma}}{\sqrt g}\right\|_{2}^2  &\lesssim \frac{1}{N_\gamma h_\gamma^d}.
\end{align*}
Moreover $H(\widehat f_{\gamma},g)=o_\Pr(\tneff^{-1/2})$ under Assumptions~\ref{ass:clt-bandwidth} and~\ref{ass:clt-design}.
Choose $M=M_\gamma\uparrow\infty$ so that $\epsilon(M_\gamma)\to 0$.
Then
$
\sqrt{\tneff} \mat R_{\gamma,2}\xrightarrow{\Pp}0.
$

\paragraph*{Self-normalizer $\mat N_\gamma^{(S)}$.}
Expand $S_\gamma^{-1}=1-(S_\gamma-1)+\rho_\gamma$, where $|\rho_\gamma|\le 2(S_\gamma-1)^2$ on $\{|S_\gamma-1|\le \tfrac12\}$, which holds with high probability by the Bernstein/Freedman bound in Assumption~\ref{ass:clt-design}.
Then,
\begin{align*}
\mat N_\gamma^{(S)}=-(S_\gamma-1) \mat M_\gamma + \rho_\gamma \mat M_\gamma,
&&
\mat M_\gamma:=\frac{1}{N_\gamma}\sum_{i\in\fp}\frac{\delta_{\gamma i}}{\pi_{\gamma i}}\tilde\psi_{h_\gamma,\gamma}(Y_{\gamma i}),
\end{align*}
with $\tilde\psi_{h_\gamma,\gamma}(y):=\int \phi_g(x)K_{h_\gamma}(x-y)R_\gamma(x) dx$.
Decompose
$$
\mat M_\gamma=\underbrace{\frac{1}{N_\gamma}\sum_{i\in\fp} \tilde\psi_{h_\gamma,\gamma}(Y_{\gamma i})}_{\mat m_\gamma} 
+\underbrace{\frac{1}{N_\gamma}\sum_{i\in\fp} (\delta_{\gamma i}/\pi_{\gamma i}-1)\tilde\psi_{h_\gamma,\gamma}(Y_{\gamma i})}_{\text{HT fluctuation}}.
$$
By Lemma~\ref{lem:clt-approx-identity} and $R_\gamma\to 1$ in $L^1(g)$, $\mat m_\gamma-\E_g[\psi_{h_\gamma}(Y)]\to 0$ in probability, and $\E_g[\psi_{h_\gamma}(Y)]=\int\phi_g g=0$ (because $\nabla \Gamma_G(\theta_0)=0$).
Hence $\mat m_\gamma=o_\Pr(1)$ and the HT fluctuation is $O_\Pr((\tneff h_\gamma^d)^{-1/2})$.
Using $\sqrt{\tneff}(S_\gamma-1)=O_\Pr(1)$ and $\sqrt{\tneff} \rho_\gamma=O_\Pr(\tneff^{-1/2})$, we get $\sqrt{\tneff} \mat N_\gamma^{(S)}\to 0$ in probability.

\paragraph*{CLT conclusion via Taylor expansion.}
Collecting all the previous steps,
$$
\sqrt{\tneff} \nabla_\theta\Gamma_\gamma(\theta_0)
=\sqrt{\tneff} \mat\Xi_\gamma+o_\Pr(1)
 \xrightarrow{\quad} \mathcal N_p(0, \mat \Sigma).
$$
(For fixed-size sampling, multiply by $(1-\alpha)$.)
A second-order Taylor expansion around $\theta_0$ gives
\(
0=\nabla_\theta\Gamma_\gamma(\hat\theta_\gamma)
=\nabla_\theta\Gamma_\gamma(\theta_0)-\mat A_\gamma(\hat\theta_\gamma-\theta_0)+r_\gamma,
\)
with $\mat A_\gamma:=-\nabla_\theta^2\Gamma_\gamma(\tilde\theta_\gamma)\xrightarrow{\,\Pr\,} \mat A$ (dominated differentiation, uniform LLN under the localized risk) and $r_\gamma=o_\Pr(\|\hat\theta_\gamma-\theta_0\|)$.
Thus
$$
\sqrt{\tneff} (\hat\theta_\gamma-\theta_0)
=\mat A_\gamma^{-1} \sqrt{\tneff} \nabla_\theta\Gamma_\gamma(\theta_0)+o_\Pp(1)
 \xrightarrow{\quad} \mathcal N_p \left(0, \mat A^{-1} \mat\Sigma \mat A^{-\intercal}\right)
$$
for Poisson--PPS, and with covariance $\mat A^{-1}[(1-\alpha)\mat\Sigma] \mat A^{-\intercal}$ for fixed-size SRS--WOR.

\qed

\subsection{Technical Lemmas}

\begin{lemma}[Approximate identity in $L^2(f)$ via localization]\label{lem:clt-approx-identity}
Under assumptions~\ref{ass:kernel-1} and~\ref{ass:clt-bandwidth}, let $A_R\uparrow \R^d$ be compacts with $0<c_R\le g\le C_R<\infty$ on $A_R$.
If $\phi_g\in L^2(g)$ then $\|K_{h_\gamma}*\phi_g-\phi_g\|_{L^2(g)}\to 0$ as $\gamma\to\infty$.
\end{lemma}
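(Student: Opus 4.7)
The plan is a compact-tail decomposition against the exhaustion $A_R \uparrow \R^d$ from Assumption~\ref{ass:clt-local-risk}, combined with $C_c^\infty$ approximation in $L^2(g)$, exploiting the local sandwich $c_R \le g \le C_R$ on each $A_R$ to reduce the problem to standard $L^2$ approximate-identity theory on compacts.

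Given $\eta > 0$, I would first pick $R^*$ with $\int_{A_{R^*}^c}|\phi_g|^2 g < \eta^2$, which is possible since $|\phi_g|^2 g$ is a finite measure on $\R^d$. On $A_{R^*+1}$ the weight $g$ is sandwiched between positive constants, so $\phi_g\1_{A_{R^*+1}}$ lies in unweighted $L^2(\R^d)$ with Lebesgue norm at most $c_{R^*+1}^{-1/2}\|\phi_g\|_{L^2(g)}$; density of $C_c^\infty(A_{R^*+1}^\circ)$ in $L^2(\R^d)$ then produces $\phi_\eta \in C_c^\infty(A_{R^*+1}^\circ)$ with $\|\phi_g - \phi_\eta\|_{L^2(g)} \le 2\eta$. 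Setting $r := \phi_g - \phi_\eta$, the triangle inequality gives
\begin{equation*}
\|K_{h_\gamma}*\phi_g - \phi_g\|_{L^2(g)} \le \|K_{h_\gamma}*\phi_\eta - \phi_\eta\|_{L^2(g)} + \|r\|_{L^2(g)} + \|K_{h_\gamma}*r\|_{L^2(g)}.
\end{equation*}
The smooth piece is routine: since $\phi_\eta$ is continuous with compact support, $K_{h_\gamma}*\phi_\eta \to \phi_\eta$ uniformly, with supports trapped in a fixed compact $B$ on which $g$ is bounded, so bounded convergence in $L^2(g)$ sends it to zero as $h_\gamma \to 0$. The middle term is at most $2\eta$ by construction.

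The hard part will be the residual convolution $\|K_{h_\gamma}*r\|_{L^2(g)}$, since convolution is not automatically bounded on $L^2(g)$ absent a Muckenhoupt-type condition on $g$. The plan is Jensen plus Fubini: since $K_{h_\gamma}$ is a probability density, $(K_{h_\gamma}*r)^2 \le K_{h_\gamma}*r^2$ pointwise, and therefore
\begin{equation*}
\|K_{h_\gamma}*r\|_{L^2(g)}^2 \le \int r^2(x)\,(\tilde K_{h_\gamma}*g)(x)\dx x, \qquad \tilde K_h(u) := K_h(-u).
\end{equation*}
I would split the outer integral at $A_{R^*+1}$. On the compact part, the local upper bound on $g$ in a slight enlargement gives $\tilde K_{h_\gamma}*g \le C_{R^*+2}$ for small $h_\gamma$, making this contribution $O(\|r\|_{L^2(g)}^2) = O(\eta^2)$ after bounding $r^2 \le c_{R^*+1}^{-1} r^2 g$ on the compact. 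On the tail $A_{R^*+1}^c$, I would invoke Assumption~\ref{ass:clt-local-risk} --- specifically the bound $\int_{A_R^c}(K_h^2*g)/g \le \tau_R(h)$ together with the pointwise Jensen inequality $(K_h*g)^2 \le K_h^2*g$ and Cauchy--Schwarz against $r^2 g$ --- to obtain a tail contribution of order $\|r\|_{L^2(g)}\,\tau_{R^*}(h_\gamma)^{1/2}$, which vanishes as $h_\gamma \to 0$ for fixed $R^*$. Letting $h_\gamma \to 0$ first and then $\eta \to 0$ concludes.

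The key subtlety, and precisely what the localized-risk hypothesis is designed to circumvent, is uniform-in-$h_\gamma$ control of $\tilde K_{h_\gamma}*g$ on the tail where $g$ lacks a global lower bound; without Assumption~\ref{ass:clt-local-risk} one would need either $g$ bounded below globally or a Muckenhoupt-type weight condition to push the convolution operator through $L^2(g)$.
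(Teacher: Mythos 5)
Your overall architecture---exhaust by $A_R$, use the sandwich $c_R\le g\le C_R$ to move to unweighted $L^2$ on compacts, approximate by a $C_c^\infty$ function, and push the residual through the convolution by Jensen and Fubini, $\|K_{h_\gamma}*r\|_{L^2(g)}^2\le\int r^2\,(\tilde K_{h_\gamma}*g)$---is in the same spirit as the paper's localization proof, which instead splits the error over $A_R$ and $A_R^c$ and splits $\phi_g=\phi_g\1_{A_R}+\phi_g\1_{A_R^c}$, using Young's inequality on the compact part. Two smaller slips first: $K_{h_\gamma}*\phi_\eta$ need not have compact support, since Assumption~\ref{ass:kernel-1} does not require $K$ to be compactly supported; this is harmless because uniform convergence alone suffices ($g$ integrates to one). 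Less innocuous, your pointwise bound $\tilde K_{h_\gamma}*g\le C_{R^*+2}$ on $A_{R^*+1}$ requires the far-field contribution $h_\gamma^{-d}\sup_{|v|\ge\delta/h_\gamma}K(v)$ to stay bounded, which \ref{ass:kernel-1} alone does not guarantee (it does hold for compactly supported or polynomially decaying kernels).

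The genuine gap is the tail estimate for the residual. You claim $\int_{A_{R^*+1}^c} r^2\,(\tilde K_{h_\gamma}*g)\lesssim\|r\|_{L^2(g)}\,\tau_{R^*}(h_\gamma)^{1/2}$ from $(K_h*g)^2\le K_h^2*g$ and Cauchy--Schwarz ``against $r^2g$''. Writing the integrand as $r^2g\cdot(\tilde K_{h_\gamma}*g)/g$ and applying Cauchy--Schwarz in the measure $r^2g\,\dx y$ gives $\|r\|_{L^2(g)}\bigl(\int_{A_{R^*+1}^c}r^2\,(\tilde K_{h_\gamma}^2*g)/g\,\dx y\bigr)^{1/2}$: the unbounded factor $r^2=\phi_g^2$ (on the tail $r\equiv\phi_g$, and $\phi_g$ is not assumed bounded) remains inside the integral, while Assumption~\ref{ass:clt-local-risk} controls only $\int_{A_R^c}(K_h^2*g)/g$, without that weight. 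Any Hölder pairing that removes $r^2$ from this integral needs either $\phi_g$ bounded on the tail or a fourth moment $\int\phi_g^4\,g<\infty$, neither of which is assumed; falling back on Lemma~\ref{lem:clt-conv-weight} is no better, since its operator bound over all of $\R^d$ is of order $h_\gamma^{-d/2}$. So the residual-tail step as stated fails, and this piece needs a different treatment---for instance, as in the paper's proof, split $\phi_g$ itself into $\phi_g\1_{A_R}+\phi_g\1_{A_R^c}$ before convolving, so that the smallness is carried by the $L^2(g)$ norm of the tail piece (which you already control by $\eta$) rather than by an attempted domination of $\tilde K_{h_\gamma}*g$ by $g$ on the far region.
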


\begin{proof}
On $A_R$, $g$ is bounded above and below, hence
$
\int_{A_R}\!\!|K_{h_\gamma}*\phi_g-\phi_g|^2 g
\le C_R\|K_{h_\gamma}*\phi_g-\phi_g\|_{L^2}^2\to 0
$
by the $L^2$ approximate identity (Young + density).
On $A_R^c$,
$
\int_{A_R^c}\!\!|K_{h_\gamma}*\phi_g-\phi_g|^2 g
\le 2\int_{A_R^c}\!\!|K_{h_\gamma}*\phi_g|^2 g + 2\int_{A_R^c}\!\!|\phi_g|^2 g.
$
The second term goes to $0$ from above as $R\uparrow\infty$ since $\phi_g\in L^2(g)$.
For the first term, fix $R$ and split $\phi_g=\phi_g\mathbf{1}_{A_R}+\phi_g\mathbf{1}_{A_R^c}$, use $K_{h_\gamma}\in L^1$, Young, and the previous tail smallness of $\phi_g\mathbf{1}_{A_R^c}$ to make it $<\varepsilon$ uniformly for small $h_\gamma$.
Now take $\gamma\to\infty$, then $R\to\infty$.
\end{proof}

\begin{lemma}[Weighted LLN for triangular weights]\label{lem:clt-weighted-lln}
Let $Z_{\gamma i}\in\R^q$ be i.i.d.\ with $\E\|Z_{\gamma 1}\|<\infty$.
Let $w_{\gamma i}\ge 0$ with $\sum_{i \in\fp} w_{\gamma i}=1$ and $\max_{i \in\fp} w_{\gamma i}\to 0$.
Then $\sum_{i\in\fp} w_{\gamma i} Z_{\gamma i}\xrightarrow{\Pp}\E Z_{\gamma 1}$.
\end{lemma}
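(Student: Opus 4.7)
}
The plan is to reduce to the centered case, then use a truncation argument that exploits the elementary identity $\sum_{i\in\fp} w_{\gamma i}^2\le (\max_i w_{\gamma i})\sum_i w_{\gamma i}=\max_i w_{\gamma i}$ to kill the variance of the bounded piece while controlling the tail by Markov's inequality.

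First, by replacing $Z_{\gamma i}$ with $Z_{\gamma i}-\E Z_{\gamma 1}$ and using $\sum_i w_{\gamma i}=1$, assume WLOG that $\E Z_{\gamma 1}=0$.
Fix $T>0$ and split $Z_{\gamma i}=Z_{\gamma i}^{(T)}+R_{\gamma i}^{(T)}$ with $Z_{\gamma i}^{(T)}:=Z_{\gamma i}\mathbf 1\{\|Z_{\gamma i}\|\le T\}$, and let $\mu_T:=\E Z_{\gamma 1}^{(T)}$. Write
\[
\sum_{i\in\fp} w_{\gamma i} Z_{\gamma i}
= \underbrace{\sum_{i\in\fp} w_{\gamma i}\big(Z_{\gamma i}^{(T)}-\mu_T\big)}_{\mathrm{I}_\gamma(T)}
+ \underbrace{\sum_{i\in\fp} w_{\gamma i} R_{\gamma i}^{(T)}}_{\mathrm{II}_\gamma(T)}
+ \mu_T.
\]

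For $\mathrm{I}_\gamma(T)$, the summands are i.i.d., centered, and bounded by $2T$; by independence (componentwise) and $\sum_i w_{\gamma i}^2\le \max_i w_{\gamma i}$,
\[
\E\|\mathrm{I}_\gamma(T)\|^2\ \le\ 4T^2\,\max_{i\in\fp} w_{\gamma i}\ \longrightarrow\ 0
\]
as $\gamma\to\infty$ for each fixed $T$, by the no-dominant-weight hypothesis. For $\mathrm{II}_\gamma(T)$, Jensen/Markov and i.i.d.\ give
\[
\E\|\mathrm{II}_\gamma(T)\|\ \le\ \sum_{i\in\fp} w_{\gamma i}\,\E\|R_{\gamma 1}^{(T)}\|\ =\ \E\bigl[\|Z_{\gamma 1}\|\mathbf 1\{\|Z_{\gamma 1}\|>T\}\bigr]\ =:\ \eta(T),
\]
and $\eta(T)\downarrow 0$ as $T\uparrow\infty$ by dominated convergence (valid since $\E\|Z_{\gamma 1}\|<\infty$). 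Similarly $\mu_T\to 0$ as $T\to\infty$.

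Given $\varepsilon>0$, pick $T$ with $\|\mu_T\|\le \varepsilon/3$ and $\eta(T)\le\varepsilon/9$; then pick $\gamma$ large so that $4T^2\max_i w_{\gamma i}\le\varepsilon^3/27$. A union bound with Chebyshev on $\mathrm{I}_\gamma(T)$ and Markov on $\mathrm{II}_\gamma(T)$ yields
\[
\Pr\left(\left\|\sum_{i\in\fp} w_{\gamma i} Z_{\gamma i}\right\|>\varepsilon\right)\ \le\ \frac{9\,\E\|\mathrm{I}_\gamma(T)\|^2}{\varepsilon^2}+\frac{3\,\eta(T)}{\varepsilon}\ \longrightarrow\ 0,
\]
proving the claim. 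The only mildly subtle point is the order of quantifiers: $T$ must be chosen before $\gamma$, so that the $T$-dependent constant multiplying $\max_i w_{\gamma i}$ in $\mathrm{I}_\gamma(T)$ can then be driven to zero by the hypothesis; there is no genuine obstacle and no moment assumption beyond $L^1$ is required.
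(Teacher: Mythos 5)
Your proof is correct and follows essentially the same route as the paper's: Chebyshev on a weighted sum whose variance is killed by $\sum_i w_{\gamma i}^2 \le \max_i w_{\gamma i}$, with truncation to reduce the first-moment case to the bounded case (the paper handles vectors componentwise, you work directly with norms — an immaterial difference). In fact, you fully write out the truncation/quantifier-order argument that the paper's proof only gestures at, so nothing further is needed.
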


\begin{proof}
Write the scalar case and apply Chebyshev with $\Var(\sum_{i \in\fp} w_i Z_i)\le (\max_i w_i)\sum w_i \E\|Z_i-\E Z\|^2$ when $\E\|Z\|^2<\infty$ (obtainable by truncation if only the first moment exists).
Extend to vectors by component-wise application.
\end{proof}

\begin{lemma}[Weighted convolution inequality]\label{lem:clt-conv-weight}
For any $\varphi\in L^2(g)$,
$$
\|K_{h_\gamma}*\varphi\|_{L^2(g)}^2
\le\|\varphi\|_{L^2(g)}^2\int_{\R^d}\frac{(K_{h_\gamma}^2*g)(y)}{g(y)}\dx y.
$$
\end{lemma}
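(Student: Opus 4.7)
The plan is to apply Cauchy--Schwarz pointwise inside the convolution and then Tonelli. The right-hand side has exactly the structure produced by splitting the kernel through the weight $\sqrt g$, so the form of the inequality essentially dictates the argument.

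Concretely, on $\{g>0\}$ I would first rewrite
\begin{equation*}
(K_{h_\gamma}*\varphi)(y)=\int \frac{K_{h_\gamma}(y-x)}{\sqrt{g(x)}}\cdot\varphi(x)\sqrt{g(x)}\,\dx x,
\end{equation*}
treating the integrand as zero on $\{g=0\}$ (an $L^2(g)$-negligible set). Applying Cauchy--Schwarz in $x$ yields the pointwise bound
\begin{equation*}
\bigl((K_{h_\gamma}*\varphi)(y)\bigr)^2 \le \|\varphi\|_{L^2(g)}^2 \int \frac{K_{h_\gamma}(y-x)^2}{g(x)}\,\dx x.
\end{equation*}
Multiplying by $g(y)$, integrating over $y$, and swapping the order of integration by Tonelli (the integrand is non-negative) rewrites the right-hand side as
\begin{equation*}
\|\varphi\|_{L^2(g)}^2 \int \frac{1}{g(x)}\Bigl(\int K_{h_\gamma}(y-x)^2\, g(y)\,\dx y\Bigr)\dx x,
\end{equation*}
and the inner integral is $(K_{h_\gamma}^2 * g)(x)$ under the paper's convolution convention. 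Renaming the outer variable produces the claim.

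There is no substantive obstacle: the argument is the standard $L^2(g)\!\to\!L^2(g)$ boundedness of a positive kernel operator against the measure $g\,\dx y$. The only delicate point is the behavior on $\{g=0\}$, which is handled by the convention $0/0=0$ since both sides vanish at such points. One minor notational subtlety is that, for a non-symmetric kernel, the inner integral literally produces a convolution with the reflected kernel $\widetilde K(u):=K(-u)$; since $\widetilde K^2=\widetilde{K^2}$, this distinction is absorbed into the symbol $(K_h^2*g)$ used throughout the paper, and in any case the kernels considered are non-negative and typically symmetric.
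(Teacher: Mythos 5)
Your proof is correct and follows essentially the same route as the paper's: Cauchy--Schwarz with the weight $\sqrt{g}$ split inside the convolution, followed by multiplying by $g(y)$ and applying Fubini/Tonelli. The extra remarks on $\{g=0\}$ and the reflected-kernel convention are fine clarifications but do not change the argument.
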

\begin{proof}
By Cauchy--Schwarz with weight $g(x)$ inside the convolution:
\begin{align*}
\Bigg|(K_{h_\gamma}*\varphi)(y)\Bigg|
&= \left|\int \varphi(x)\,K_{h_\gamma}(x-y)\dx x\right|
=\left|\int \varphi(x)\sqrt{g(x)}\frac{K_{h_\gamma}(x-y)}{\sqrt{g(x)}}\dx x\right| \\
&\le \|\varphi\|_{L^2(g)}\left(\int \frac{K_{h_\gamma}^2(x-y)}{g(x)}\dx x\right)^{1/2}.
\end{align*}
Taking the square and multiplying by $g(y)$, then integrating over $y$ to obtain the claim after Fubini.
\end{proof}

\begin{lemma}[Remainder via Hellinger and $\chi^2(g)$]\label{lem:clt-rem-hellinger-chisq}
For any $\varphi\in L^2(g)$, any square-integrable $u$, and densities $h,g$,
$$
\left|\int \varphi u (R_h-1)\right|
 \le M H(h,g) \left\|\frac{u}{\sqrt g}\right\|_{2} + \sqrt{\epsilon(M)} \left\|\frac{u}{\sqrt g}\right\|_{2},
$$
where $R_h=2\sqrt g/(\sqrt h+\sqrt g)$, $H(h,g)=\|\sqrt h-\sqrt g\|_2$, and $\epsilon(M):=\int_{\{|\varphi|>M\}}\varphi^2 g dy$.
\end{lemma}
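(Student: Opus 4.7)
\medskip
\noindent\textbf{Proof plan for Lemma~\ref{lem:clt-rem-hellinger-chisq}.}

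The plan is a standard truncate–and–Cauchy–Schwarz split at level $M$ for the test function $\varphi$, combined with the identity $R_h-1=(\sqrt g-\sqrt h)/(\sqrt h+\sqrt g)$. First I would decompose $\varphi=\varphi_M+\varphi_M^c$ with $\varphi_M:=\varphi\,\1\{|\varphi|\le M\}$, giving two contributions to the integral to be bounded separately.

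For the bounded piece I would write
\[
\left|\int \varphi_M\,u\,(R_h-1)\,\dx y\right|
\;\le\; M\int |u|\cdot \frac{|\sqrt h-\sqrt g|}{\sqrt h+\sqrt g}\,\dx y,
\]
then multiply and divide by $\sqrt g$ and apply Cauchy--Schwarz:
\[
M\int \frac{|u|}{\sqrt g}\cdot\frac{\sqrt g\,|\sqrt h-\sqrt g|}{\sqrt h+\sqrt g}\,\dx y
\;\le\; M\,\left\|\tfrac{u}{\sqrt g}\right\|_2 \left(\int \frac{g}{(\sqrt h+\sqrt g)^2}(\sqrt h-\sqrt g)^2\,\dx y\right)^{1/2}.
\]
The key pointwise estimate is $\sqrt g/(\sqrt h+\sqrt g)\le 1$, which collapses the inner integral to $H(h,g)^2$ and yields the first summand $M\,H(h,g)\,\|u/\sqrt g\|_2$.

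For the tail piece I would exploit the uniform bound $|R_h-1|\le 1$ (which follows from $R_h\in[0,2]$) and Cauchy--Schwarz against the weight $\sqrt g$:
\[
\left|\int \varphi_M^c\,u\,(R_h-1)\,\dx y\right|
\;\le\; \int |\varphi|\,\1\{|\varphi|>M\}\sqrt g\,\cdot\,\frac{|u|}{\sqrt g}\,\dx y
\;\le\; \sqrt{\epsilon(M)}\,\left\|\tfrac{u}{\sqrt g}\right\|_2.
\]
Adding the two bounds gives the claim. The only place any care is required is the pointwise inequality $\sqrt g/(\sqrt h+\sqrt g)\le 1$ used to absorb the denominator of $R_h-1$; everything else is a clean truncation plus Cauchy--Schwarz. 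There is no real obstacle here, so the proof is essentially three short displays once the split is set up.
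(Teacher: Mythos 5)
Your proposal is correct and follows essentially the same route as the paper's proof: truncate $\varphi$ at level $M$, bound the bounded piece via Cauchy--Schwarz with weight $\sqrt g$ together with the pointwise estimate $\sqrt g/(\sqrt h+\sqrt g)\le 1$ (the paper writes this as $\sqrt h+\sqrt g\ge\sqrt g$), and handle the tail with $|R_h-1|\le 1$ and Cauchy--Schwarz to produce $\sqrt{\epsilon(M)}\,\|u/\sqrt g\|_2$. No substantive differences to note.
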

\begin{proof}
Split the domain into $\{|\varphi|\le M\}$ and its complement.
On $\{|\varphi|\le M\}$,
$$
\int |\varphi u|\,|R_h-1|
\le M\int \frac{|u|}{\sqrt g} \frac{|\sqrt h-\sqrt g|}{\sqrt h+\sqrt g} \sqrt g
\le M\left\|\frac{u}{\sqrt g}\right\|_2 \left(\int g |R_h-1|^2\right)^{1/2}.
$$
Since $|R_h-1|={|\sqrt h-\sqrt g|}/{(\sqrt h+\sqrt g)}$ and $\sqrt h+\sqrt g\ge \sqrt g$,
$
\int g |R_h-1|^2\le \int (\sqrt h-\sqrt g)^2=H(h,g)^2.
$
On $\{|\varphi|>M\}$, Cauchy--Schwarz gives
$
\int |\varphi u||R_h-1|\le \sqrt{\epsilon(M)} \|u/\sqrt g\|_2
$
since $|R_h-1|\le 1$.
Combine the two bounds.
\end{proof}

\section{Robustness proof}\label{sec:proof-robust}

\begin{lemma}[Uniform Hellinger inequality]\label{lem:hell-unif}
For any distributions $F_1, F_2$ with densities $f_1, f_2$ and any $\theta$,
$$
\Bigg| \Gamma_{F_1}(\theta)-\Gamma_{F_2}(\theta) \Bigg|
= \left|\int \sqrt{f_\theta(y)} \left(\sqrt{f_1(y)}-\sqrt{f_2(y)}\right) \dx y \right|
\le H(f_1,f_2).
$$
\end{lemma}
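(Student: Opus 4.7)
}
The plan is to simply unwind the definition of the Hellinger affinity and then apply Cauchy--Schwarz, using the fact that $f_\theta$ is a probability density. First I would note that by the definition
$$
\Gamma_{F_j}(\theta)=\int \sqrt{f_j(y)\,f_\theta(y)}\dx y=\int \sqrt{f_\theta(y)}\,\sqrt{f_j(y)}\dx y\qquad(j=1,2),
$$
so that the difference factors as
$$
\Gamma_{F_1}(\theta)-\Gamma_{F_2}(\theta)=\int \sqrt{f_\theta(y)}\,\bigl(\sqrt{f_1(y)}-\sqrt{f_2(y)}\bigr)\dx y,
$$
which already gives the claimed equality (no absolute value yet).

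Next I would apply the Cauchy--Schwarz inequality in $L^2(\R^d)$ to the two factors $\sqrt{f_\theta}$ and $\sqrt{f_1}-\sqrt{f_2}$, yielding
$$
\left|\int \sqrt{f_\theta}\,\bigl(\sqrt{f_1}-\sqrt{f_2}\bigr)\dx y\right|
\le \bigl\|\sqrt{f_\theta}\bigr\|_{L^2}\,\bigl\|\sqrt{f_1}-\sqrt{f_2}\bigr\|_{L^2}.
$$
Since $f_\theta$ is a density, $\|\sqrt{f_\theta}\|_{L^2}^2=\int f_\theta=1$, and by the definition adopted in the robustness section, $\|\sqrt{f_1}-\sqrt{f_2}\|_{L^2}=H(f_1,f_2)$. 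Combining gives the bound, and uniformity in $\theta$ is automatic because the right-hand side is $\theta$-free after invoking $\int f_\theta=1$.

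There is no real obstacle here: the lemma is a one-line consequence of Cauchy--Schwarz. The only minor point worth flagging is consistency with the $H$ convention used in the robustness section (the $L^2$ norm of $\sqrt{f_1}-\sqrt{f_2}$, without the $1/\sqrt{2}$ normalization from $H^2$ in the introduction); the argument uses this convention directly. The same calculation has already appeared in the proof of Lemma~\ref{lem:uniform}, which can be cited if a shorter presentation is preferred.
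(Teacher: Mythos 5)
Your argument is correct and coincides with the paper's proof, which is exactly Cauchy--Schwarz together with $\|\sqrt{f_\theta}\|_{L^2}=1$ and the convention $H(f_1,f_2)=\|\sqrt{f_1}-\sqrt{f_2}\|_{L^2}$ from the robustness section. Nothing further is needed.
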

\begin{proof}
Cauchy--Schwarz and $\|\sqrt{f_\theta}\|_2=1$.
\end{proof}

\begin{proposition}[Continuity of $T$ in the Hellinger topology.]\label{prop:continuity}
Let $(G_n)$ be a sequence of distributions with densities $g_n$ s.t.\ $H(g_n, g)\to 0$.
Under Assumption~\ref{ass:infl-model}, any sequence of maximizers $T(G_n)$ satisfies $T(G_n)\to \theta_0=T(G)$.
\end{proposition}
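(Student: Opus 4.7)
The plan is to follow the deterministic analog of the argmax-continuity route used in the proof of Theorem~\ref{thm:mhde-consistency}, with Lemma~\ref{lem:hell-unif} providing the uniform closeness of $\Gamma_{G_n}$ to $\Gamma$, and the separation margin from Assumption~\ref{ass:infl-model}(iii) providing identifiability. No new machinery beyond these two inputs should be required.

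The first step is to apply Lemma~\ref{lem:hell-unif} pointwise and take the supremum in $\theta$, giving the deterministic uniform bound
$$
\sup_{\theta \in \Theta}\bigl|\Gamma_{G_n}(\theta)-\Gamma(\theta)\bigr|\ \le\ H(g_n,g)\ \longrightarrow\ 0.
$$
Since $T(G_n)$ is a maximizer of $\Gamma_{G_n}$ (or, more generally, an $\varepsilon_n$-maximizer with $\varepsilon_n\to 0$, which covers the case where the supremum is not attained), the standard near-maximum inequality applied twice gives
$$
\Gamma(\theta_0)-\Gamma\bigl(T(G_n)\bigr)\ \le\ 2\,H(g_n,g)+\varepsilon_n\ \longrightarrow\ 0.
$$

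The second step is to combine this with the separation margin in Assumption~\ref{ass:infl-model}(iii). Fix $\varepsilon>0$ and choose $n$ large enough that $2H(g_n,g)+\varepsilon_n<\Delta(\varepsilon)$. For such $n$, if $\|T(G_n)-\theta_0\|\ge \varepsilon$ then Assumption~\ref{ass:infl-model}(iii) forces $\Gamma(T(G_n))\le \Gamma(\theta_0)-\Delta(\varepsilon)$, contradicting the previous display. Hence eventually $\|T(G_n)-\theta_0\|<\varepsilon$, which proves $T(G_n)\to\theta_0$.

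The main obstacle, if any, is purely notational: handling the possibility that $\Gamma_{G_n}$ does not attain its supremum, so that $T(G_n)$ should be read as any measurable selection of $\varepsilon_n$-maximizers. This is already built into the argument above by absorbing $\varepsilon_n$ into the slack of the near-maximum inequality, exactly as in Theorem~\ref{thm:mhde-consistency}. Note that compactness of $\Theta$, upper semicontinuity of $\Gamma$, or the smoothness parts (i), (ii), (iv) of Assumption~\ref{ass:infl-model} play no role in the proof; only the uniqueness-with-separation clause (iii) is used.
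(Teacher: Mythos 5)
Your proof is correct and follows essentially the same route as the paper: uniform closeness $\sup_\theta|\Gamma_{G_n}(\theta)-\Gamma(\theta)|\le H(g_n,g)$ from Lemma~\ref{lem:hell-unif} combined with the separation margin in Assumption~\ref{ass:infl-model}(iii), differing only in bookkeeping (your excess-risk bound $2H(g_n,g)+\varepsilon_n<\Delta(\varepsilon)$ versus the paper's $\Delta/3$ splitting). The extension to $\varepsilon_n$-maximizers is a harmless and correct generalization of the stated result.
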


\begin{proof}
By Lemma~\ref{lem:hell-unif}, $\sup_\theta|\Gamma_{g_n}(\theta)-\Gamma_g(\theta)|\le H(g_n,g)\to 0$.
Fix $\varepsilon>0$.
By separation, $\sup_{\|\theta-\theta_0\|\ge\varepsilon}\Gamma_g(\theta)\le \Gamma_g(\theta_0)-\Delta(\varepsilon)$.
For $n$ large with $H(g_n,g)\le \Delta(\varepsilon)/3$,
\begin{align*}
\sup_{\|\theta-\theta_0\|\ge\varepsilon}\Gamma_{g_n}(\theta)
\le \Gamma_g(\theta_0)-\tfrac{2}{3}\Delta(\varepsilon),
&&
\Gamma_{g_n}(\theta_0)\ \ge\ \Gamma_g(\theta_0)-\tfrac{1}{3}\Delta(\varepsilon).
\end{align*}
Hence any maximizer $T(G_n)$ must lie in $B(\theta_0,\varepsilon)$.
Since $\varepsilon$ is arbitrary, $T(G_n)\to\theta_0$.
\end{proof}

\begin{proof}[Proof of Theorem~\ref{thm:influence}]
Let $G_\varepsilon=(1-\varepsilon)G+\varepsilon H$ and $\theta_\varepsilon:=T(G_\varepsilon)$.
By definition, $S_{G_\varepsilon}(\theta_\varepsilon)=0$ for small $\varepsilon$.
Taylor expanding $S_{G_\varepsilon}(\theta)$ at $(\theta_0,G)$,
\[
0=S_G(\theta_0) + \mat Q (\theta_\varepsilon-\theta_0)+\dot S_G(\theta_0;H) \varepsilon + R_\varepsilon,
\]
where $\|R_\varepsilon\|=o(\|\theta_\varepsilon-\theta_0\|)+o(\varepsilon)$ by the dominated smoothness in Assumption~\ref{ass:infl-model}(ii) and the definition of $\dot S_G$.
Since $S_G(\theta_0)=0$ and $\mat Q$ is nonsingular,
$$
\theta_\varepsilon-\theta_0 = -\mat Q^{-1} \dot S_G(\theta_0;H) \varepsilon + o(\varepsilon).
$$
Dividing by $\varepsilon$ and letting $\varepsilon\downarrow 0$ gives the stated derivative.
\end{proof}

\section{Additional simulation results}\label{sec:suppl-simres}

Here we present additional results for the simulations in Section~4.1 of the main manuscript.
Unless otherwise noted, the simulation settings are identical to the main manuscript.

\subsection{Gamma Model with Calibrated Weights}\label{sec:suppl-gamma-calibrated}

We now consider that each unit in the finite population $\mathscr U$ belongs to one of five clusters.
We then consider calibrated sampling weights based on an auxiliary variable $X$, with known cluster totals.

Specifically, for each $i \in \mathscr U$ we know the cluster assignment via the membership function $C \colon i\to\{1,\dotsc,5\}$.
Moreover, we assume to know the cluster totals for the population, $\overline X_c = \sum_{i\in\mathscr U} X_i \mathbf{1}_{\{ C(i) = c \}}$, $c \in \{1,\dotsc,5\}$.
Given a sample $\mathscr S$ and survey weights $w_i$, we determine the calibration adjustment factors $\xi_c = \frac{1}{\overline X_c} \sum_{i\in\mathscr S} w_i X_i \mathbf{1}_{\{ C(i) = c \}}$.
The calibrated weights are then given by $\omega_i = w_i \xi_{C(i)}$ for $i \in \mathscr S$.

The results for calibrated weights are very similar to the results with the original survey weights.
The relative bias in Figure~\ref{fig:sim-gamma-rel-bias-calibrated} and the relative variance in Figure~\ref{fig:sim-gamma-rel-rmse-calibrated} still show that the MHDE is very close to the MLE across all scenarios.

\begin{figure}[t]
    \centering
    \includegraphics[width=1\linewidth]{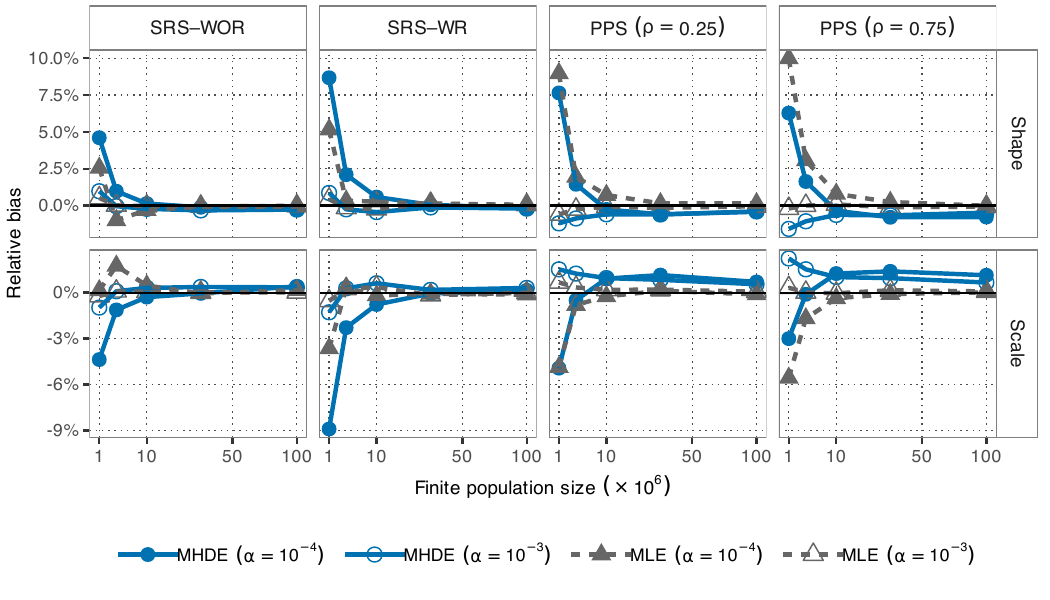}
    \caption{%
      Relative bias of the MHDE (blue dots) and MLE (gray triangles) in a Gamma superpopulation model using calibrated weights and various sampling designs.
      The sample size in each simulation is determined by $n=\alpha N$, with $\alpha \in \{10^{-3},10^{-4}\}$.%
      }
    \label{fig:sim-gamma-rel-bias-calibrated}
\end{figure}

\begin{figure}[t]
    \centering
    \includegraphics[width=1\linewidth]{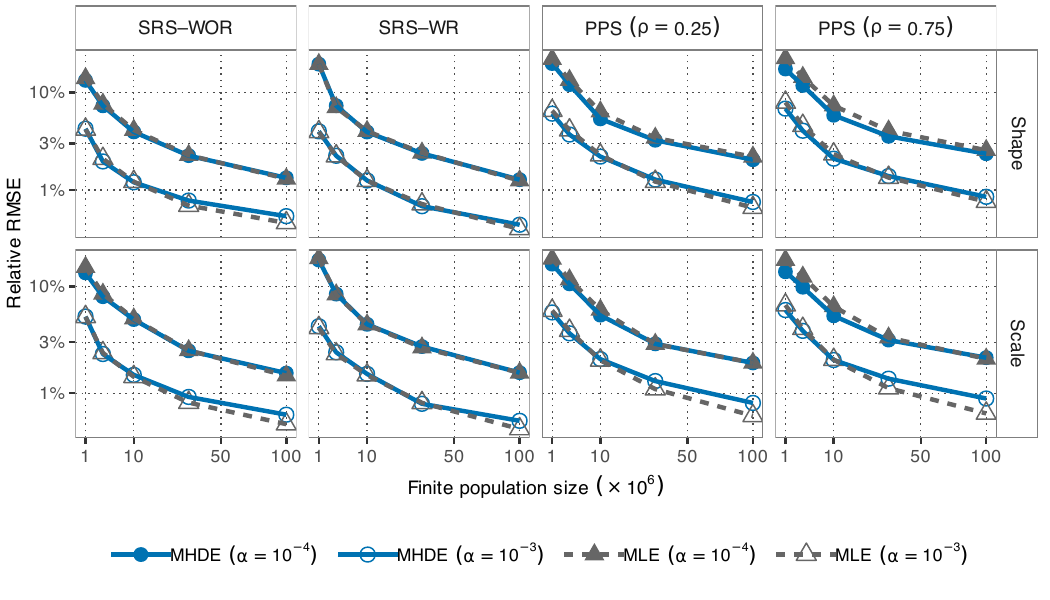}
    \caption{%
      Relative RMSE of the MHDE (blue dots) and MLE (gray triangles) under a Gamma superpopulation model using calibrated weights and various sampling designs.
      The sample size in each simulation is determined by $n=\alpha N$, with $\alpha \in \{10^{-3},10^{-4}\}$.%
      }
    \label{fig:sim-gamma-rel-rmse-calibrated}
\end{figure}

\subsubsection{Alternative Contamination Model}\label{sec:suppl-simres-tcont}

Here we consider a truncated \textit{t} distribution as the source of contamination instead of the Normal distribution from the main manuscript.
We replace $\lfloor \varepsilon n \rfloor$ observations in the sample with i.i.d.~draws from a shifted and truncated (positive) \textit{t} distribution with 3 degrees of freedom with mode at $z > 0$.
We further scale the contamination to have the same variance as the true Gamma distribution from the superpopulation.

Figure~\ref{fig:sim-gamma-infl-tdist} shows the influence function (top) for 10\% contamination and varying 

\begin{figure}[t]
    \centering
    \includegraphics[width=1\linewidth]{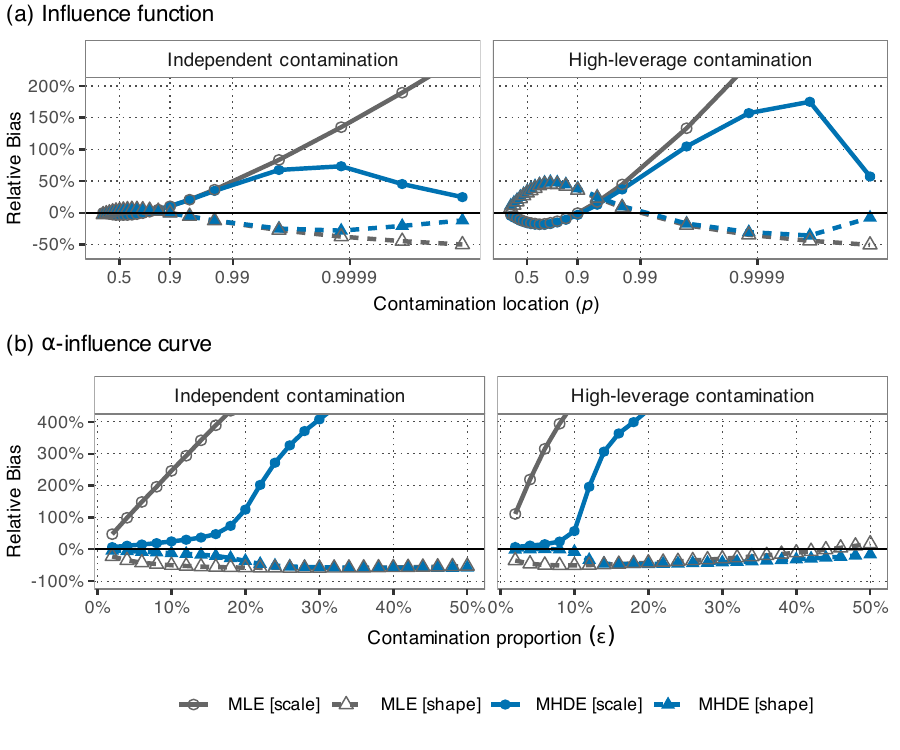}
    \caption{%
      Influence functions (top) and alpha curves (bottom) for the MHDE and MLE of the scale ($\circ$) and shape ($\triangle$) parameters in the Gamma model. %
      The contamination proportion in the influence function at the top is set to $\varepsilon=0.1$. 
      The horizontal axis shows the location of the mode of the truncated \textit{t} distribution in terms of the quantile of the true superpopulation model, i.e., $z = G^{-1}(p)$.%
      For the alpha curve the mode of the \textit{t} distribution is located at $z=G^{-1}(p=0.99) \approx 232\,342$.%
      }
    \label{fig:sim-gamma-infl-tdist}
\end{figure}

\subsection{Lognormal Model}\label{sec:suppl-simres-lognormal}

Instead of the Gamma model, we now consider a lognormal superpopulation model, $Y \sim \text{LN}(\mu=9, \sigma=2)$.
The bias, shown in Figure~\ref{fig:sim-lognormal-rel-bias}, is close to 0 for both the mean and the SD parameters.
Similarly, the variance goes to 0 quickly as the finite population size and the sample size increase, with practically no difference between the MHDE and the MLE.
Due to the minimal bias, inference using the asymptotic distribution of the MHDE is also highly reliable.
The empirical coverage probability of the 95\% CI in Figure~\ref{fig:sim-lognormal-inference} is very close to the nominal level across all sampling schemes, even for small sample sizes.

\begin{figure}[t]
    \centering
    \includegraphics[width=1\linewidth]{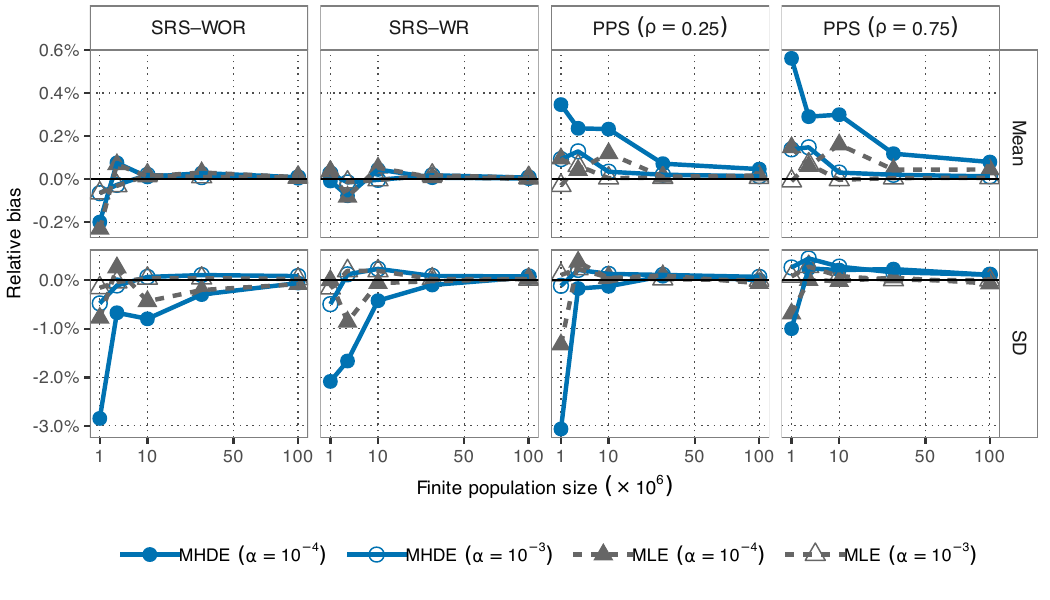}
    \caption{%
      Relative bias of the MHDE (blue dots) and MLE (gray triangles) in a Lognormal superpopulation model using various sampling designs.
      The sample size in each simulation is determined by $n=\alpha N$, with $\alpha \in \{10^{-3},10^{-4}\}$.%
      }
    \label{fig:sim-lognormal-rel-bias}
\end{figure}

\begin{figure}[t]
    \centering
    \includegraphics[width=1\linewidth]{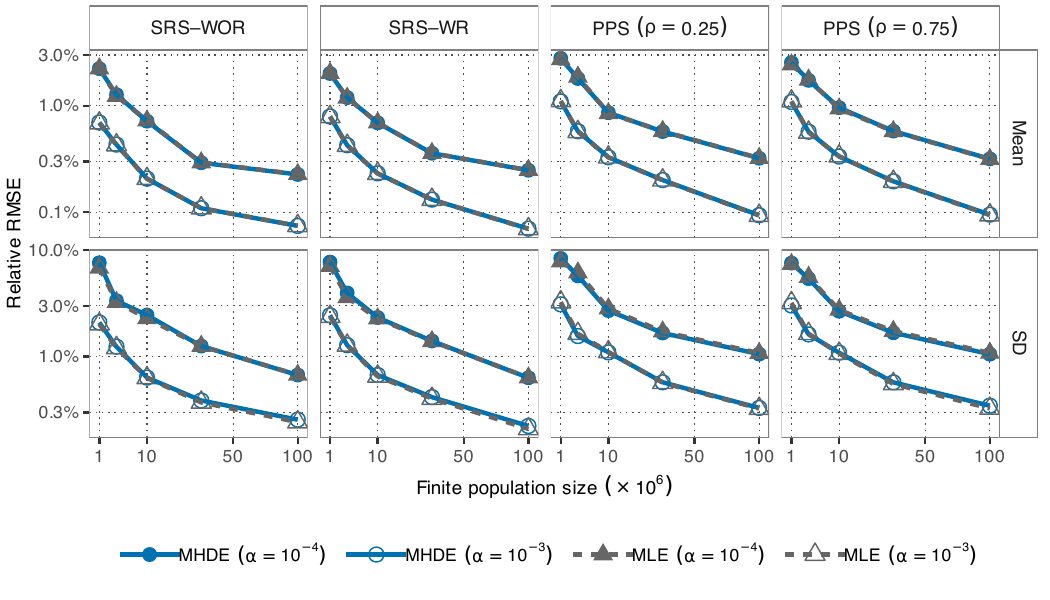}
    \caption{%
      Relative RMSE of the MHDE (blue dots) and MLE (gray triangles) in a Lognormal superpopulation model using various sampling designs.
      The sample size in each simulation is determined by $n=\alpha N$, with $\alpha \in \{10^{-3},10^{-4}\}$.%
      }
    \label{fig:sim-lognormal-rel-rmse}
\end{figure}

\begin{figure}[t]
    \centering
    \includegraphics[width=1\linewidth]{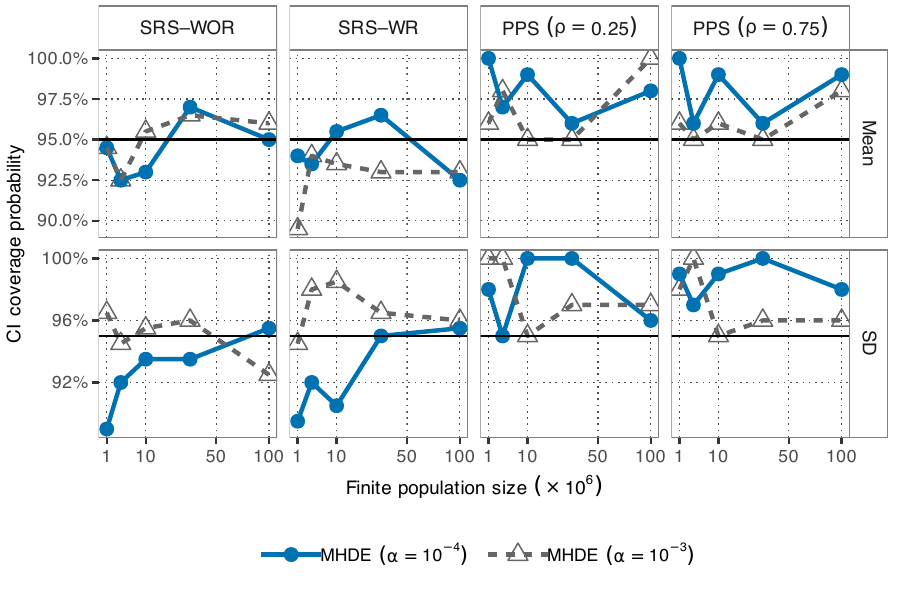}
    \caption{%
      Coverage probability of the 95\% confidence interval for the MHDE in a Lognormal superpopulation model as the finite population size $N$ increases using different sampling designs.
      The sample size in each simulation is determined by $n=\alpha N$, with $\alpha \in \{10^{-3},10^{-4}\}$.%
      }
    \label{fig:sim-lognormal-inference}
\end{figure}

\end{appendices}

\clearpage

\bibliography{references}

\begin{thebibliography}{22}
\providecommand{\natexlab}[1]{#1}
\providecommand{\url}[1]{#1}
\csname url@samestyle\endcsname
\providecommand{\newblock}{\relax}
\providecommand{\bibinfo}[2]{#2}
\providecommand{\BIBentrySTDinterwordspacing}{\spaceskip=0pt\relax}
\providecommand{\BIBentryALTinterwordstretchfactor}{4}
\providecommand{\BIBentryALTinterwordspacing}{\spaceskip=\fontdimen2\font plus
\BIBentryALTinterwordstretchfactor\fontdimen3\font minus
  \fontdimen4\font\relax}
\providecommand{\BIBforeignlanguage}[2]{{%
\expandafter\ifx\csname l@#1\endcsname\relax
\typeout{** WARNING: IEEEtranN.bst: No hyphenation pattern has been}%
\typeout{** loaded for the language `#1'. Using the pattern for}%
\typeout{** the default language instead.}%
\else
\language=\csname l@#1\endcsname
\fi
#2}}
\providecommand{\BIBdecl}{\relax}
\BIBdecl

\bibitem[Zhang(2000)]{Zhang2000}
L.-C. Zhang, ``Post-{{Stratification}} and {{Calibration}}---{{A Synthesis}},''
  \emph{The American Statistician}, vol.~54, no.~3, pp. 178--184, Aug. 2000.

\bibitem[Beran(1977)]{Beran1977}
R.~Beran, ``Minimum {{Hellinger Distance Estimates}} for {{Parametric
  Models}},'' \emph{The Annals of Statistics}, vol.~5, no.~3, pp. 445--463, May
  1977.

\bibitem[Donoho and Liu(1988)]{Donoho1988}
D.~L. Donoho and R.~C. Liu, ``The "{{Automatic}}" {{Robustness}} of {{Minimum
  Distance Functionals}},'' \emph{The Annals of Statistics}, vol.~16, no.~2,
  Jun. 1988.

\bibitem[Simpson(1989)]{Simpson1989}
D.~G. Simpson, ``Hellinger {{Deviance Tests}}: {{Efficiency}}, {{Breakdown
  Points}}, and {{Examples}},'' \emph{Journal of the American Statistical
  Association}, vol.~84, no. 405, pp. 107--113, Mar. 1989.

\bibitem[Lindsay(1994)]{Lindsay1994}
B.~G. Lindsay, ``Efficiency {{Versus Robustness}}: {{The Case}} for {{Minimum
  Hellinger Distance}} and {{Related Methods}},'' \emph{The Annals of
  Statistics}, vol.~22, no.~2, Jun. 1994.

\bibitem[Lu et~al.(2003)Lu, Hui, and Lee]{Lu2003}
Z.~Lu, Y.~V. Hui, and A.~H. Lee, ``Minimum {{Hellinger Distance Estimation}}
  for {{Finite Mixtures}} of {{Poisson Regression Models}} and {{Its
  Applications}},'' \emph{Biometrics}, vol.~59, no.~4, pp. 1016--1026, Dec.
  2003.

\bibitem[Castilla et~al.(2018)Castilla, Mart{\'i}n, and Pardo]{Castilla2018a}
E.~Castilla, N.~Mart{\'i}n, and L.~Pardo, ``Minimum phi-divergence estimators
  for multinomial logistic regression with complex sample design,'' \emph{AStA
  Advances in Statistical Analysis}, vol. 102, no.~3, pp. 381--411, Jul. 2018.

\bibitem[Castilla et~al.(2021)Castilla, Ghosh, Martin, and Pardo]{Castilla2021}
E.~Castilla, A.~Ghosh, N.~Martin, and L.~Pardo, ``Robust semiparametric
  inference for polytomous logistic regression with complex survey design,''
  \emph{Advances in Data Analysis and Classification}, vol.~15, no.~3, pp.
  701--734, Sep. 2021.

\bibitem[S{\"a}rndal et~al.(1992)S{\"a}rndal, Swensson, and
  Wretman]{Sarndal1992}
C.-E. S{\"a}rndal, B.~Swensson, and J.~Wretman, \emph{Model Assisted Survey
  Sampling}, ser. Springer {{Series}} in {{Statistics}}.\hskip 1em plus 0.5em
  minus 0.4em\relax New York, NY: Springer New York, 1992.

\bibitem[{CDC. Center for Disease Control and Prevention}(2025)]{nhanes2025}
{CDC. Center for Disease Control and Prevention}, ``National {{Center}} for
  {{Health Statistics}} ({{NCHS}}). {{National Health}} and {{Nutrition
  Examination Survey Data}},'' 2025.

\bibitem[Horvitz and Thompson(1952)]{Horvitz1952}
D.~G. Horvitz and D.~J. Thompson, ``A {{Generalization}} of {{Sampling Without
  Replacement}} from a {{Finite Universe}},'' \emph{Journal of the American
  Statistical Association}, vol.~47, no. 260, pp. 663--685, Dec. 1952.

\bibitem[Tukey(1959)]{Tukey1959}
J.~W. Tukey, \emph{A Survey of Sampling from Contaminated Distributions}.\hskip
  1em plus 0.5em minus 0.4em\relax Princeton, New Jersey: Princeton University,
  1959.

\bibitem[Cover and Thomas(2001)]{Cover2001}
T.~M. Cover and J.~A. Thomas, \emph{Elements of Information Theory},
  1st~ed.\hskip 1em plus 0.5em minus 0.4em\relax Wiley, Oct. 2001.

\bibitem[Nelder and Mead(1965)]{Nelder1965}
J.~A. Nelder and R.~Mead, ``A {{Simplex Method}} for {{Function
  Minimization}},'' \emph{The Computer Journal}, vol.~7, no.~4, pp. 308--313,
  Jan. 1965.

\bibitem[Cheng and Vidyashankar(2006)]{Cheng2006}
A.-l. Cheng and A.~N. Vidyashankar, ``Minimum {{Hellinger}} distance estimation
  for randomized play the winner design,'' \emph{Journal of Statistical
  Planning and Inference}, vol. 136, no.~6, pp. 1875--1910, Jun. 2006.

\bibitem[Hampel(1974)]{Hampel1974}
F.~R. Hampel, ``The {{Influence Curve}} and its {{Role}} in {{Robust
  Estimation}},'' \emph{Journal of the American Statistical Association},
  vol.~69, no. 346, pp. 383--393, 1974.

\bibitem[Adrogu{\'e} and Madias(2000)]{Adrogue2000}
H.~J. Adrogu{\'e} and N.~E. Madias, ``Hyponatremia,'' \emph{New England Journal
  of Medicine}, vol. 342, no.~21, pp. 1581--1589, May 2000.

\bibitem[Kish(1992)]{Kish1992}
L.~Kish, ``Weighting for unequal {{P}}\_i,'' \emph{Journal of Official
  Statistics}, vol.~8, no.~2, p. 183, 1992.

\bibitem[Lumley et~al.(2003)Lumley, Gao, and Schneider]{Lumley2003}
T.~Lumley, P.~Gao, and B.~Schneider, ``Survey: {{Analysis}} of {{Complex Survey
  Samples}},'' pp. 4.4--8, Jan. 2003.

\bibitem[Cressie and Read(1984)]{Cressie1984}
N.~Cressie and T.~R. Read, ``Multinomial {{Goodness-Of-Fit Tests}},''
  \emph{Journal of the Royal Statistical Society Series B: Statistical
  Methodology}, vol.~46, no.~3, pp. 440--464, Jul. 1984.

\bibitem[Pardo(2006)]{Pardo2006}
L.~Pardo, \emph{Statistical Inference Based on Divergence Measures}, ser.
  Statistics: Textbooks and Monographs.\hskip 1em plus 0.5em minus 0.4em\relax
  Boca Raton, Fla.: Chapman \& Hall/CRC, 2006, no. 185.

\bibitem[Freedman(1975)]{Freedman1975}
D.~A. Freedman, ``On {{Tail Probabilities}} for {{Martingales}},'' \emph{The
  Annals of Probability}, vol.~3, no.~1, Feb. 1975.

\end{thebibliography}

\end{document}